\numberwithin{equation}{section}
\newtheorem{theorem}{Theorem}[section]
\newtheorem{corollary}[theorem]{Corollary}
\newtheorem{lemma}[theorem]{Lemma}
\newtheorem{problem}[theorem]{Open Problem}
\newtheorem{proposition}[theorem]{Proposition}
\theoremstyle{definition}
\newtheorem{assumption}[theorem]{Assumption}
\newtheorem{definition}[theorem]{Definition}
\newtheorem{heuristic}[theorem]{Heuristic computation}
\newtheorem{remark}[theorem]{Remark}
\makeatletter\renewenvironment{proof}[1][\proofname] {\par\pushQED{\qed}\normalfont\topsep6\p@\@plus6\p@\relax\trivlist\item[\hskip\labelsep\bfseries#1\@addpunct{.}]\ignorespaces}{\popQED\endtrivlist}
\def\1{\mathbf 1}
\def\a{\mathbf a}
\def\al{\alpha}
\def\be{\beta}
\def\C{\mathcal{C}}
\def\dd{\mathrm{d}}
\def\E{\mathbf E}
\def\eps{\varepsilon}
\def\deq{\stackrel{\mathrm d}{=}}
\def\erf{\mathrm{erf}}
\def\erfc{\mathrm{erfc}}
\def\H{\mathcal{H}}
\def\im{\mathrm i}
\def\La{\Lambda}
\def\la{\lambda}
\def\M{\mathcal M}
\def\N{\mathbb N}
\def\R{\mathbb{R}}
\def\Sc{\mathrm{Sc}}
\def\sgn{\mathrm{sgn}}
\def\Tr{\mathrm{Tr}}
\def\U{\mathcal{U}}
\def\ubar{\underline}
\def\X{\boldsymbol X}
\def\Z{\mathbb{Z}}
\newcommand{\ep}{\hfill \ensuremath{\Box}}
\def\multiset#1#2{\ensuremath{\left(\kern-.3em\left(\genfrac{}{}{0pt}{}{#1}{#2}\right)\kern-.3em\right)}}
\renewcommand\bar{\overline}
\renewcommand\d{~\textnormal{d}}
\renewcommand\hat{\widehat}
\renewcommand\phi{\varphi}
\renewcommand\Pr{\mathbf{P}}
\begin{document}
\title[Spiked beta ensembles]{Edge of spiked beta ensembles, stochastic Airy semigroups and reflected Brownian motions} 
\author{Pierre Yves Gaudreau Lamarre}
\address{ORFE Department, Princeton University, Princeton, NJ 
08544, USA}
\email{plamarre@princeton.edu}
\author{Mykhaylo Shkolnikov}
\address{ORFE Department, Princeton University, Princeton, NJ 
08544, USA}
\email{mshkolni@gmail.com}
\thanks{P.~Y.~Gaudreau Lamarre is partially supported by an NSERC Doctoral Fellowship and a Gordon Y.~S.~Wu Fellowship. M.~Shkolnikov is partially supported by the NSF grant DMS-1506290.}

\begin{abstract}
We access the edge of Gaussian beta ensembles with one spike by analyzing high powers of the associated tridiagonal matrix models. In the classical cases $\beta=1,\,2,\,4$, this corresponds to studying the fluctuations of the largest eigenvalues of additive rank one perturbations of the GOE/GUE/GSE random matrices. In the infinite-dimensional limit, we arrive at a one-parameter family of random Feynman-Kac type semigroups, which features the stochastic Airy semigroup of Gorin and Shkolnikov \cite{GS} as an extreme case. Our analysis also provides Feynman-Kac formulas for the spiked stochastic Airy operators, introduced by Bloemendal and Vir\'{a}g \cite{BV1}. The Feynman-Kac formulas involve functionals of a reflected Brownian motion and its local times, thus, allowing to study the limiting operators by tools of stochastic analysis. We derive a first result in this direction by obtaining a new distributional identity for a reflected Brownian bridge conditioned on its local time at zero. 
\end{abstract}

\subjclass[2010]{60B20, 60H25, 47D08, 60J55}
\keywords{Beta ensembles, Feynman-Kac formulas, local times, low rank perturbations, moments method, operator limits, path transformations, random tridiagonal matrices, reflected Brownian motions, Skorokhod map, stochastic Airy semigroups, strong invariance principles}

\maketitle

%

\section{Introduction}

A remarkable advance in the study of random matrices and related point processes has been the development of a theory of operator limits for such objects in \cite{BV1}, \cite{BV2}, \cite{ES}, \cite{GS}, \cite{HF}, \cite{KRV}, \cite{RR}, \cite{RRV}, \cite{RW}, \cite{VV1}, \cite{VV2}. This line of research originates from the publications \cite{E}, \cite{ES} by Edelman and Sutton, who have realized that the random tridiagonal matrices of Dumitriu and Edelman \cite{DE} (see \eqref{Equation: Tridiagonal} below for a definition) can be viewed as finite-dimensional approximations of suitable random Schr\"odinger operators. Since the joint eigenvalue distributions in the Dumitriu-Edelman models for the parameter values $\beta=1,\,2,\,4$ are given by the eigenvalue point processes of the Gaussian orthogonal/unitary/symplectic ensembles (GOE/GUE/GSE), respectively, the insights of \cite{E}, \cite{ES} suggest that the limiting fluctuations of the largest eigenvalues of the latter can be read off from the random Schr\"odinger operators associated with the former. This approach has been carried out rigorously in the seminal paper \cite{RRV} by Ram\'{i}rez, Rider and Vir\'{a}g. We also refer to \cite{KRV} for a corresponding universality result, to \cite{BV1}, \cite{BV2} for extensions to spiked random matrix ensembles, to \cite{RR}, \cite{HF}, \cite{RW} for operator limits describing the fluctuations of the smallest eigenvalues of large positive definite random matrices, and to \cite{VV1}, \cite{VV2} for operators arising in the study of the bulk eigenvalues of random matrices. 

\medskip

More recently, Gorin and Shkolnikov \cite{GS} have proposed a different operator limit approach to the study of the largest eigenvalues in the Gaussian beta ensembles. The latter are point processes on the real line, in which the joint density of the points $\lambda_1\ge\lambda_2\ge\cdots\ge\lambda_N$ is proportional to 
\begin{equation}\label{Equation: Gibbs}
\prod_{1\le q_1<q_2\le N} (x_{q_1}-x_{q_2})^\be\;\prod_{q=1}^N e^{-\be x_q^2/4}.
\end{equation}
For $\beta=1,\,2,\,4$, the Gaussian beta ensemble describes the eigenvalue process of a random matrix from the GOE/GUE/GSE, respectively (see e.g. \cite[Section 2.5]{AGZ}). Gaussian beta ensembles with general values of $\beta>0$ appear frequently in the statistical physics literature and are commonly known therein as ``log-gases'', see e.g. \cite[Section 4.1]{F}. 

\medskip

The starting point of \cite{GS} is the celebrated result of Dumitriu and Edelman \cite{DE} establishing \eqref{Equation: Gibbs} as the joint eigenvalue distribution, for \textit{all} values of $\beta>0$, of the random matrix
\begin{equation}\label{Equation: Tridiagonal}
H^{\be}_N:=\frac{1}{\sqrt{\be}}\left[
\begin{array}{ccccc}
\sqrt{2}G_1&\chi_{(N-1)\be}\\
\chi_{(N-1)\be}&\sqrt{2}G_2&\chi_{(N-2)\be}\\
&\chi_{(N-2)\be}&\sqrt{2}G_3&\ddots&\\
&&\ddots&\ddots&\chi_{\be}\\
&&&\chi_{\be}&\sqrt{2}G_N
\end{array}
\right],
\end{equation}
where $G_1,\,G_2,\,\ldots,\,G_N$ are independent standard Gaussian random variables, $\chi_{\beta}$, $\chi_{2\beta},\,\ldots,\,\chi_{(N-1)\beta}$ are independent chi random variables indexed by their parameters, and the chi random variables are independent of the Gaussian random variables. Then, the convergence in finite-dimensional distribution sense, as $N\to\infty$, of the fluctuations 
\begin{equation}\label{eq:edge_scaling}
\Lambda_{q,N}:=N^{1/6}\big(2\sqrt{N}-\lambda_q\big),\quad q=1,\,2,\,\ldots
\end{equation}  
to the eigenvalues $\Lambda_1\le\Lambda_2\le\cdots$ of the stochastic Airy operator 
\begin{equation}
\H^\be f:=\bigg(-\frac{\dd^2}{\dd x^2}+x+\frac{2}{\sqrt{\be}}W'_x\bigg) f,\quad f\in L^2([0,\infty)),\;\;f(0)=0
\end{equation}
(see \cite[Theorem 1.1]{RRV}), where $W'$ is the white noise on $[0,\infty)$, and the simple computation
\begin{equation}\label{eq:N2/3heuristic}
\bigg(\frac{\la_q}{2\sqrt{N}}\bigg)^{\lfloor TN^{2/3}\rfloor}
=\bigg(1-\frac{\Lambda_{q,N}}{2N^{2/3}}\bigg)^{\lfloor TN^{2/3}\rfloor}\to e^{-T\Lambda_q/2},\quad T\ge 0
\end{equation}
suggest the convergence of the random matrices $\Big(\frac{H_N^\beta}{2\sqrt{N}}\Big)^{\lfloor TN^{2/3}\rfloor}$, $N\in\N$ to $e^{-T\H^\be/2}$ in a suitable operator topology. The main result of \cite{GS} (see \cite[Theorem 2.8]{GS}) establishes a more general version of such an operator convergence directly, without relying on the findings of \cite{RRV}. In contrast to $\H^\be$, the operator $e^{-T\H^\be/2}$ is an integral operator, and the corresponding integral kernel can be written in terms of the Brownian motion $W$ and an independent Brownian bridge (see \cite[equation (2.4)]{GS}). This allows to study the properties of $e^{-T\H^\be/2}$ by tools of stochastic analysis (see \cite[Proposition 2.14]{GS}, \cite[Theorem 1.1]{H} for an example). 

\medskip

In this paper, we continue the program initiated in \cite{GS} and consider the case of Gaussian beta ensembles with one spike. To this end, it is useful to recall that the stochastic Airy operator $\H^\beta$ describes the limiting behavior of the largest eigenvalues in the Laguerre beta ensemble (see \cite{RRV}, \cite{DE}). The latter interpolates between the eigenvalue processes of sample covariance matrices $\X\X^*$, where the entries of $\X$ are independent standard Gaussian random variables. From the point of view of statistical applications, the Laguerre beta ensemble is arguably not the most interesting model, since the entries in the columns of $\X$ are uncorrelated. Instead, one often considers multiplicative perturbations of $\X\X^*$ of the form $\X\Sigma\X^*$,
where $\Sigma=\widetilde\Sigma_r\oplus I_{N-r}$ is the direct sum of a deterministic full rank $r\times r$ matrix $\widetilde\Sigma_r$ and the $(N-r)\times(N-r)$ identity matrix $I_{N-r}$. Such models are known in the literature as the spiked covariance models, and we refer to the introduction in \cite{BV1} for an excellent summary. 

\medskip

As first discovered by Baik, Ben Arous and P\'ech\'e \cite{BBP} in the case of complex covariance matrices, the fluctuations of the largest eigenvalues exhibit a phase transition (known as the BBP phase transition) depending on the size of the perturbation. In the subcritical regime, the perturbation $\Sigma$ is so insignificant that the limiting behaviour is the same as in the unperturbed case; in the critical regime, the fluctuation exponents are the same as in the unperturbed case, but the limiting distributions are different;
and in the supercritical regime, the size of the perturbation is so large that the largest eigenvalues of $\X\Sigma\X^*$ separate from the bulk of the spectrum.

\medskip

For rank one perturbations, the critical regime of the BBP phase transition has been analyzed in detail by Bloemendal and Vir\'ag \cite{BV1}, and we describe their main result in the case of an additive perturbation. The corresponding tridiagonal model 
\begin{equation}\label{Equation: Spiked Tridiagonal 1}
H^{\be;w}_N:=\frac{1}{\sqrt{\be}}\left[
\begin{array}{ccccc}
\sqrt{2}G_1+\sqrt{\be N}\ell_N&\chi_{(N-1)\be}\\
\chi_{(N-1)\be}&\sqrt{2}G_2&\chi_{(N-2)\be}\\
&\chi_{(N-2)\be}&\sqrt{2}G_3&\ddots&\\
&&\ddots&\ddots&\chi_{\be}\\
&&&\chi_{\be}&\sqrt{2}G_N
\end{array}
\right]
\end{equation}
can be obtained for $\beta=1,\,2,\,4$ by applying the Dumitriu-Edelman tridiagonalization procedure to the sum of a GOE/GUE/GSE matrix and a rank one matrix with non-zero eigenvalue $\sqrt{N}\ell_N$, where 
\begin{equation}\label{Equation: Critical Phase Scaling}
\lim_{N\to\infty}N^{1/3}(1-\ell_N)=w\in\R. 
\end{equation}
Then, for all $\beta>0$ and under the scaling of \eqref{eq:edge_scaling}, the ordered eigenvalues of $H^{\be;w}_N$ converge in finite-dimensional distribution sense, as $N\to\infty$, to the ordered eigenvalues of the spiked stochastic Airy operator
\begin{equation}\label{eq:SSAE}
\H^{\be;w}f:=\bigg(-\frac{\dd^2}{\dd x^2}+x+\frac{2}{\sqrt{\be}}W'_x\bigg)f,\quad f\in L^2([0,\infty)),\;\;f'(0)=wf(0).
\end{equation}
The case $w=\infty$ formally corresponds to the Dirichlet boundary condition $f(0)=0$, motivating the convention $\H^{\be;\infty}:=\H^\be$.

\begin{remark}
The limit in \eqref{Equation: Critical Phase Scaling} determines the regime in the BBP phase transition: a limit of $\infty$ corresponds to the subcritical regime, a finite limit to the critical regime, and a limit of $-\infty$ to the supercritical regime. 
\end{remark}

\medskip

We turn to our main results. For the sake of convenience, we work with a modification of the tridiagonal model \eqref{Equation: Spiked Tridiagonal 1}:   
\begin{equation}\label{Equation: Spiked Matrix Model}
M^{\be;w}_N:=\left[
\begin{array}{ccccc}
\sqrt{N}\ell_N&\sqrt{N}+\xi_0\\
\sqrt{N}+\xi_0&\a_1&\sqrt{N-1}+\xi_1\\
&\sqrt{N-1}+\xi_1&\a_2&\ddots&\\
&&\ddots&\ddots&1+\xi_{N-1}\\
&&&1+\xi_{N-1}&\a_N
\end{array}
\right].
\end{equation}
The following assumption summarizes the conditions we impose on the matrix entries throughout the paper. We emphasize that we allow the random variables $\a_1,\,\a_2,\,\ldots$ and $\xi_0,\,\xi_1,\,\ldots$ to vary with $N$, even though the dependence on $N$ is suppressed to simplify the notation. 

\begin{assumption}\label{Assumption: Matrix Assumption}
The random variables $\a_1,\,\a_2,\,\ldots$ and $\xi_0,\,\xi_1,\,\ldots$ are mutually independent and such that: 
\begin{enumerate}[(a)]
\item $\big|\E[\a_m]\big|=o\big((N-m)^{-1/3}\big)$ and $\big|\E[\xi_m]\big|=o\big((N-m)^{-1/3}\big)$ as $(N-m)\to\infty$, 
\item $\E[\a_m^2]=s_{\a}^2+o(1)$ and $\E[\xi_m^2]=s_\xi^2+o(1)$ as $(N-m)\to\infty$, where $s_{\a}$, $s_{\xi}$ are non-negative constants satisfying $\frac{s_{\a}^2}{4}+s_\xi^2=\frac{1}{\beta}$ for some $\beta>0$, 
\item $\E\big[|\a_m|^p\big]\le C^p p^{\gamma p}$ and $\E\big[|\xi_m|^p\big]\le C^p p^{\gamma p}$ for all $N$, $m$ and $p$, with some constants $C<\infty$ and $0<\gamma<2/3$.
\end{enumerate}
Moreover, we assume that the non-random sequence $\ell_N$, $N\in\N$ satisfies \eqref{Equation: Critical Phase Scaling} and use the convention $\a_0:=0$.
\end{assumption}

\begin{remark}\label{rmk:special entries}
Assumption \ref{Assumption: Matrix Assumption} holds, in particular, when the $\a_m$'s are chosen to be i.i.d. Gaussian with mean $0$ and variance $2/\be$, whereas the $\xi_m$'s are drawn independently such that each $\sqrt{\be}(\sqrt{N-m}+\xi_m)$ is a chi random variable with parameter $\be(N-m)$ (see \cite[Lemma 2.2]{GS}).
\end{remark}

Motivated by the computation in \eqref{eq:N2/3heuristic}, we consider the powers 
\begin{equation}\label{eq:script_M}
\M^{\be;w}_{T;N}:=\left(\frac{M^{\be;w}_N}{2\sqrt{N}}\right)^{\lfloor TN^{2/3}\rfloor},\quad T\geq0.
\end{equation}
The operator limits of the latter turn out to be given by the following definition. 

\begin{definition}\label{def:SAS}
For every $\be,T>0$ consider the operator
\begin{equation}\label{eq:U}
(\U^{\be;w}_Tf)(x):=\E_{R^x}\bigg[\exp\bigg(-\int_0^T\frac{R^x_t}2\d t+\int_0^\infty\frac{L_T^a(R^x)}{\sqrt{\be}}\d W_a-w\frac{L_T^0(R^x)}2\bigg)f(R^x_T)\bigg]
\end{equation}
acting on the space
\begin{equation}\label{eq:D}
{\mathcal D}:=\Big\{f\in L^1_{\mathrm{loc}}([0,\infty)):\;\big|f(x)\big|\le C_1 e^{C_2 x^{1-\delta}}\text{ for some }C_1,C_2<\infty,\,\delta\in(0,1)\Big\},
\end{equation}
where
\begin{enumerate}[(a)]
\item $R^x$ is a reflected Brownian motion started at $x\ge 0$,
\item $\E_{R^x}[\,\cdot\,]$ is the expectation with respect to $R^x$, 
\item the local time of $R^x$ is defined as the continuous version of
\begin{equation}
L_T^a(R^x):=\lim_{\eps\downarrow0}\frac1{\eps}\int_0^T\1_{[a,a+\eps)}(R^x_t)\d t,\quad a\ge0,
\end{equation}
\item $W$ is a standard Brownian motion independent of $R^x$,
\item the It\^o integral with respect to $W$ is defined pathwise, as per \cite{Ka}.
\end{enumerate}
\end{definition}

\begin{remark}\label{rmk:bridge_kernel}
A trivial restatement of Definition \ref{def:SAS} is that $\U^{\be;w}_T$ is a random integral operator with the kernel
\begin{equation}\label{Equation: Spiked Kernel}
\begin{split}
& K^{\be;w}_T(x,y):=\frac{\exp\left(-\frac{(x-y)^2}{2T}\right)+\exp\left(-\frac{(x+y)^2}{2T}\right)}{\sqrt{2\pi T}}\\
& \qquad\qquad\;\;\cdot\E_{R^x}\bigg[\exp\bigg(-\int_0^T\frac{R^x_t}2\d t+\int_0^\infty\frac{L_T^a(R^x)}{\sqrt{\be}}\d W_a-w\frac{L_T^0(R^x)}2\bigg)\bigg|R^x_T=y\bigg].
\end{split}
\end{equation}
\end{remark}

\begin{remark}\label{rmk:matrix as operator}
For each $N$, the matrix $\M^{\be;w}_{T;N}$ can be regarded as an integral operator acting on $L^1_{\mathrm{loc}}([0,\infty))$ by associating $\R^{N+1}$ with the subspace of step functions
\begin{equation}\label{Equation: Step Functions}
L^1_N([0,\infty)):=\bigg\{\sum_{l=0}^N v_l\1_{[N^{-1/3}l,N^{-1/3}(l+1))}:\;v_0,\,v_1,\,\ldots,\,v_N\in\R\bigg\},
\end{equation}
and then mapping functions $f\in L^1_{\mathrm{loc}}([0,\infty))$ into $L^1_N([0,\infty))$ via
\begin{equation}\label{Equation: Projection}
(\pi_N f)(x):=
\displaystyle\sum_{l=0}^N N^{1/6}\int_{N^{-1/3}l}^{N^{-1/3}(l+1)}f(y)\d y\cdot \1_{[N^{-1/3}l,N^{-1/3}(l+1))}(x)
\end{equation}
before acting with $\M^{\be;w}_{T;N}$ on them. 
\end{remark}

Our main convergence result reads as follows.

\begin{theorem}\label{Theorem: Main}
For every $\be>0$, $w\in\R$, and with ${\mathcal D}$ defined in \eqref{eq:D}, one has
\begin{equation}\label{eq:main theorem eq}
\forall\,f,g\in{\mathcal D},\;T\ge0:\quad
\lim_{N\to\infty}\big(\pi_Nf\big)^\top\M^{\be;w}_{T;N}\big(\pi_Ng\big)=
\int_0^\infty f(x)\,(\U^{\be;w}_Tg)(x)\,\mathrm{d}x,
\end{equation}
where the convergence is in distribution and in the sense of moments. Moreover, these convergences hold jointly for any finite collection of $T$'s, $f$'s and $g$'s, and in the case of the convergence in distribution also jointly with the convergence in distribution 
\begin{equation}\label{Equation: Origin of Noise}
\sqrt{\be}\lim_{N\to\infty}N^{-1/6}\sum_{m=0}^{\lfloor N^{1/3}x\rfloor}\left(\frac{\a_m}{2}+\xi_m\right)=W_x,\quad x\geq0
\end{equation}
with respect to the Skorokhod topology. Here $W$ is the Brownian motion from \eqref{eq:U}.
\end{theorem}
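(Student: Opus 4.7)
The plan is to build on the combinatorial moment-method framework of \cite{GS} for the unspiked case and extend it to accommodate the additive rank one perturbation. Writing $\M^{\be;w}_{T;N}$ as the transfer matrix of a nearest-neighbor walk on $\{0,1,\ldots,N\}$ of length $\lfloor TN^{2/3}\rfloor$, the bilinear form $(\pi_Nf)^\top\M^{\be;w}_{T;N}(\pi_Ng)$ becomes a weighted sum over such paths $P=(i_0,\ldots,i_{\lfloor TN^{2/3}\rfloor})$, with weight $\prod_s M^{\be;w}_N[i_s,i_{s+1}]/(2\sqrt{N})$ multiplied by the endpoint values of $\pi_Nf$ and $\pi_Ng$. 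Each factor splits into a deterministic dominant piece plus a small fluctuation coming from $\a_{i_s}$ or $\xi_{i_s}$, together with the distinguished corner piece coming from $\ell_N$.

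The leading deterministic piece $1/2$, combined with the number of nearest-neighbor paths, yields under the diffusive rescaling $t=s/N^{2/3}$ and $x=k/N^{1/3}$ the transition density of a reflected Brownian motion $R^x$, which is exactly the Gaussian kernel of Remark \ref{rmk:bridge_kernel}. The subleading deterministic term $\sqrt{N-k}/(2\sqrt{N})=(1/2)\sqrt{1-k/N}$ aggregates along a path, via Taylor expansion and a Riemann-sum argument, into the factor $\exp\bigl(-\tfrac12\int_0^T R^x_t\d t\bigr)$. Grouping noise contributions by the level $m$ visited, the total fluctuation factor takes the form $\exp\bigl(\sum_m(\text{visits to }m)\cdot(\tfrac12\a_m+\xi_m)+\text{error}\bigr)$; since the rescaled visit count at level $m\sim N^{1/3}a$ converges to a constant multiple of $L^a_T(R^x)$ and the partial sums of $\tfrac12\a_m+\xi_m$ obey the invariance principle \eqref{Equation: Origin of Noise}, one obtains in the limit the pathwise It\^o integral $\int_0^\infty L_T^a(R^x)\d W_a/\sqrt{\be}$, interpreted in the sense of \cite{Ka}. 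The new spike contribution is localized at the corner: each visit of the walk to site $0$ contributes a factor $\sqrt{N}\ell_N/(2\sqrt{N})=1/2-w/(2N^{1/3})+o(N^{-1/3})$ in place of the usual $1/2+\text{noise}$, so the cumulative product over all such visits produces $\exp\bigl(-wL^0_T(R^x)/2\bigr)$, matching the boundary term in Definition \ref{def:SAS}.

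For the convergence in distribution, I would couple the rescaled walks to $R^x$ and the partial-sum process in \eqref{Equation: Origin of Noise} to the Brownian motion $W$ by a KMT-type strong invariance principle, placing all objects on one probability space so that the convergence of the bilinear form reduces, after tail bounds, to dominated convergence. The joint convergence of moments is obtained by expanding a product of bilinear forms into a multi-path sum and repeating the analysis; Assumption \ref{Assumption: Matrix Assumption}(c) furnishes the sub-Gaussian moment bounds needed to control cross-terms and large deviations of individual entries. The hardest step, inherited from \cite{GS}, is the uniform tail control that excludes paths wandering beyond the typical $N^{1/3}$ scale, which is needed both for tightness and to justify exchanging limits. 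The additional obstacle specific to the spiked setting is obtaining uniform estimates on the number of returns of the walk to site $0$ (together with a careful treatment of the diagonal-vs-off-diagonal steps there): this is precisely what identifies the $L^0_T(R^x)$ term and allows one to handle $w\in\R$ in a unified framework, leaving the limit $w=\infty$ to a separate passage recovering the Dirichlet case of \cite{GS}.
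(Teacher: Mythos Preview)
Your outline follows the same overall architecture as the paper's proof: expand the bilinear form as a sum over lattice paths, separate deterministic drift, noise, and corner contributions, and pass to the limit via a strong invariance principle on a common probability space. Two points deserve sharper attention, and the second is a genuine gap.

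First, the spike term is not attached to each \emph{visit} to site $0$, but only to each \emph{horizontal step} at $0$, i.e.\ to the diagonal entry $M^{\be;w}_N[0,0]=\sqrt{N}\ell_N$. Steps $0\to1$ and $1\to0$ carry the ordinary off-diagonal weight $(\sqrt{N}+\xi_0)/(2\sqrt{N})$. The diagonal entries at sites $m>0$ are $\a_m/(2\sqrt{N})=O(N^{-1/2})$, not ``$1/2+\text{noise}$''; this sharp dichotomy is exactly why the underlying random walk in the paper is lazy \emph{only} at $0$, while horizontal steps away from $0$ are expanded separately in the index $j$ and handled via Newton's identities, as in \cite{GS}. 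The relevant count is the number $H(X^{k;x})$ of horizontal steps at $0$, and its limit is $L^0_T(R^x)/2$, not $L^0_T(R^x)$; the factor $1/2$ is not cosmetic and has a specific origin (below).

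Second, and more seriously, the phrase ``KMT-type strong invariance principle'' conceals the main new technical ingredient. KMT couples an \emph{unconstrained} SSRW to a Brownian motion; it does not directly couple a walk that is lazy/reflected at $0$ to a reflected Brownian motion, nor does it say anything about the count of horizontal steps at $0$. The paper's device (Section~\ref{sec:strong_inv}) is to realize both the lazy-at-zero walk and the reflected Brownian motion as images, under the Skorokhod reflection map $\Gamma$, of an SSRW and of a Brownian motion respectively (Proposition~\ref{Proposition: Discrete Skorokhod} and \eqref{eq:refl_map}). Since $\Gamma$ is Lipschitz in the uniform norm, the KMT bound transfers immediately to the reflected processes; and under this coupling one has the exact identity $H(X^{k;x})=\sup_t(-\widetilde X^{k;x}_t)_+$, so that $N^{-1/3}H(X^{k;x})\to\sup_t(-B^x_t)_+=L^0_T(R^x)/2$ by Skorokhod's lemma \eqref{eq:loc_time}. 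Without this Skorokhod-map construction (or an equivalent substitute), you have no mechanism for the strong coupling you invoke, and the identification of the boundary local-time term, including its factor $1/2$, remains formal.
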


Next, we present some natural properties of the operators $\U^{\be;w}_T$, $T\geq0$, viewed as operators on $L^2([0,\infty))$, and, in particular, connect them to the spiked stochastic Airy operator $\H^{\be;w}$ in \eqref{eq:SSAE}. 

\begin{proposition}\label{Proposition: Properties}
For every $\be>0$ and $w\in\R$, the following statements hold.
\begin{enumerate}[(a)]
\item If the same Brownian motion $W$ is used in the definitions of $\H^{\be;w}$ and $\U^{\be;w}_T$, then for every $T\ge0$,
\begin{equation}\label{eq:relation to SAO}
\U^{\be;w}_T=\exp\bigg(-\frac{T}{2}\H^{\be;w}\bigg)\;\;\text{ almost surely},
\end{equation}
in the sense that if $(f_q,\La_q:q\in\N)$ are the eigenfunction-eigenvalue pairs of $\H^{\be;w}$, then $\U^{\be;w}_T$ is the unique operator on $L^2([0,\infty))$ with eigenfunction-eigenvalue pairs $(f_q,e^{-T\La_q/2}:\,q\in\N\big)$ almost surely.
\item The family $(\U^{\be;w}_T:\,T\geq0)$ has the almost sure semigroup property
in the sense that for all $T_1,T_2\geq0$,
one has $\U^{\be;w}_{T_1}\U^{\be;w}_{T_2}=\U^{\be;w}_{T_1+T_2}$ almost surely.
\item For every $T>0$, the operator $\U^{\be;w}_T$ is symmetric,
non-negative and belongs to the Hilbert-Schmidt class almost surely. 
\item For every $T>0$, the operator $\U^{\be;w}_T$ is almost surely trace class and obeys the trace formula
\begin{equation}\label{Equation: Spiked Trace Formula}
\Tr\big(\U^{\be;w}_T\big)=\int_0^\infty K^{\be;w}_T(x,x)\d x.
\end{equation}
\item The family $(\U^{\be;w}_T:\,T\geq0)$ is $L^2$-strongly continuous in expectation, that is, for all $p>0$, $T\geq0$ and $f\in L^2([0,\infty))$, one has
\begin{equation}\label{Equation: L2 strong}
\lim_{t\to T}\E\Big[\big\|\U^{\be;w}_Tf-\U^{\be;w}_tf\big\|^p_{L^2([0,\infty))}\Big]=0.
\end{equation}
\end{enumerate}
\end{proposition}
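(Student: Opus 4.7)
The plan is to first establish the symmetric Hilbert--Schmidt part of (c), which gives a spectral decomposition of $\U^{\be;w}_T$ and then enables (a) by matching that decomposition against Theorem \ref{Theorem: Main} and the main result of \cite{BV1}; the remaining items (b), (d), (e), and non-negativity in (c) follow by functional calculus.

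\textbf{Symmetry and Hilbert--Schmidt in (c).} Via Remark \ref{rmk:bridge_kernel}, $\U^{\be;w}_T$ is an integral operator with kernel $K^{\be;w}_T$. Symmetry $K^{\be;w}_T(x,y)=K^{\be;w}_T(y,x)$ comes from time-reversing the reflected Brownian bridge: conditional on $R^x_T=y$, the reversed process $(R^x_{T-t})_{0\le t\le T}$ is distributed as the bridge from $y$ to $x$, and the functionals $\int_0^T R^x_t\,\mathrm{d}t$, $L^0_T(R^x)$, and the pairing $\int_0^\infty L^a_T(R^x)/\sqrt{\be}\,\mathrm{d}W_a$ are invariant in law under this reversal. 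For the Hilbert--Schmidt bound I would condition on the bridge and use that the $W$-integral, conditional on $R^x$, is Gaussian with variance $\int_0^\infty L^a_T(R^x)^2/\be\,\mathrm{d}a$; the Gaussian prefactor in \eqref{Equation: Spiked Kernel} gives decay in $|x-y|$, the factor $\exp(-\int_0^T R^x_t/2\,\mathrm{d}t)$ gives decay as $x+y\to\infty$, and the $\exp(-w L^0_T(R^x)/2)$ factor is controlled by exponential moments of the reflected bridge local time at the origin, mirroring the unspiked argument of \cite{GS}.

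\textbf{Part (a).} Being symmetric and Hilbert--Schmidt, $\U^{\be;w}_T$ admits an $L^2([0,\infty))$ orthonormal eigenbasis $(h_q)$ with real, square-summable eigenvalues $(\nu_q)$. Theorem \ref{Theorem: Main} provides joint convergence of the bilinear forms $(\pi_N f)^\top\M^{\be;w}_{T;N}(\pi_N g)\to\langle f,\U^{\be;w}_T g\rangle$ on a dense family $f,g\in\mathcal D$, which by a Courant--Fischer style argument forces the (ordered) eigenvalues of $\M^{\be;w}_{T;N}$ to converge to $(\nu_q)$. On the other hand those eigenvalues are $(\lambda_q/(2\sqrt{N}))^{\lfloor TN^{2/3}\rfloor}$, and \cite{BV1} combined with \eqref{eq:N2/3heuristic} identifies their limits as $e^{-T\La_q/2}$, so $\nu_q=e^{-T\La_q/2}$. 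Matching eigenfunctions is obtained by coupling both limit operators to the same Brownian motion $W$ via the joint Skorokhod statement of Theorem \ref{Theorem: Main} (through \eqref{Equation: Origin of Noise}) and the analogous joint construction of $\H^{\be;w}$ in \cite{BV1}.

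\textbf{Parts (b), (d), (e) and non-negativity.} Granted (a), non-negativity of $\U^{\be;w}_T$ is immediate since $e^{-T\H^{\be;w}/2}$ has positive spectrum; (b) is the semigroup law of the functional calculus $T\mapsto e^{-T\H^{\be;w}/2}$; for (d) one writes $\U^{\be;w}_T=(\U^{\be;w}_{T/2})^2$, a product of two Hilbert--Schmidt operators (hence trace class), and obtains \eqref{Equation: Spiked Trace Formula} from Mercer's theorem applied to the symmetric, non-negative, continuous kernel $K^{\be;w}_T$; and (e) follows from the Parseval identity $\|\U^{\be;w}_T f-\U^{\be;w}_t f\|^2_{L^2}=\sum_q(e^{-T\La_q/2}-e^{-t\La_q/2})^2|\langle f,f_q\rangle|^2$, which tends to $0$ pathwise by dominated convergence, with the $L^p$ upgrade to expectation supplied by a local-in-$t$ Hilbert--Schmidt bound from the first step.

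\textbf{Expected obstacle.} The main technical difficulty is the a.s.\ Hilbert--Schmidt estimate. The core inequality is an exponential-moment bound of the form $\E_{R^x}\big[\exp\big(C\int_0^\infty L_T^a(R^x)^2\,\mathrm{d}a\big)\big|R^x_T=y\big]<\infty$ with $C=2/\be$, uniform enough in $(x,y)$ to integrate against the Gaussian prefactor. The spiked correction $-w L^0_T(R^x)/2$ is sign-indefinite when $w<0$ and therefore requires separate exponential-moment control on the reflected-bridge occupation time at zero (Borodin--Salminen / Ray--Knight type formulas); the $w=\infty$ Dirichlet convention is then recovered by monotonicity in $w$, connecting back to the \cite{GS} framework in the limit.
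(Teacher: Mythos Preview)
Your overall strategy---first establish the Hilbert--Schmidt and symmetry parts of (c), then deduce (a) by matching eigendata through Theorem \ref{Theorem: Main} and the \cite{BV1} convergence, then read off (b), (d), (e) and non-negativity---is correct and close to the paper's, but the logical ordering and a couple of technical devices differ. The paper handles the Hilbert--Schmidt estimate through an explicit kernel decomposition $K^{\be;w}_T=K^{\be}_T+\bar K^{\be;w}_T$ (Lemma \ref{Lemma: Kernel Equivalences}), where $K^{\be}_T$ is the Dirichlet kernel from \cite{GS} and $\bar K^{\be;w}_T$ is the correction coming from paths of $\widetilde W^x$ that touch zero; this isolates exactly where the spiked local-time term $-wL^0_T/2$ enters and lets the exponential moment of $L^0_T$ be controlled via the explicit Brownian-bridge local-time density, rather than reflected-bridge Ray--Knight formulas. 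Your sketch is compatible with this but does not surface the decomposition.

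Two further differences worth noting. First, the paper proves the semigroup property (b) and non-negativity \emph{directly} from the kernel (Markov property of reflected Brownian motion for (b); the identity $\langle \U^{\be;w}_T f,f\rangle=\|\U^{\be;w}_{T/2}f\|^2$ for non-negativity), whereas you derive them from (a) by functional calculus. Your route is shorter once (a) is in hand, and is perfectly valid; the paper's direct arguments have the advantage of being independent of Theorem \ref{Theorem: Main}. Second, for the trace formula in (d) you invoke Mercer's theorem, which on the unbounded domain $[0,\infty)$ requires extra justification (continuity of the kernel and a suitable non-compact Mercer variant). The paper sidesteps this: it writes $\Tr(\U^{\be;w}_T)=\sum_q e^{-T\La_q/2}$, identifies this sum as the squared Hilbert--Schmidt norm of $\U^{\be;w}_{T/2}$, and then uses the kernel-level semigroup identity $\int_0^\infty K^{\be;w}_{T/2}(x,y)K^{\be;w}_{T/2}(y,x)\,\mathrm{d}y=K^{\be;w}_T(x,x)$ to arrive at \eqref{Equation: Spiked Trace Formula}. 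You should either adopt this route or make precise which Mercer-type result you are using.
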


\begin{remark}
Proposition \ref{Proposition: Properties}(a) should be viewed as a Feynman-Kac formula for the spiked stochastic Airy operator $\H^{\be;w}$. 
\end{remark}

Remark \ref{rmk:bridge_kernel} shows that one might be able to understand observables of the limiting operators $\U^{\be;w}_T$, $T \ge0$, such as moments of certain linear statistics of their spectra, by investigating the corresponding functionals of reflected Brownian motions conditioned on their endpoints. As a first step in this direction, we consider $\E\big[K^{\be;w}_T(0,0)\big]$, which, in view of the next proposition, seems to be the simplest object to study. 

\begin{proposition}\label{Proposition: Reflected Bridge Formula}
For every $\be,T>0$ and $w\in\R$,
\begin{equation}\label{eq:exp_kernel}
\begin{split}
& \E\big[K^{\be;w}_T(0,0)\big] \\
& =\sqrt{\frac{2}{\pi T}}\,
\E\Bigg[\exp\Bigg(-\frac{T^{3/2}}2\bigg(\int_0^1r_t\d t-\int_0^\infty \frac{L^a_1(r)^2}\be\d a\bigg)-T^{1/2}w\frac{L^0_1(r)}2\Bigg)\Bigg],
\end{split}
\end{equation}
where $r_t$, $t\in[0,1]$ is a reflected Brownian bridge. 
\end{proposition}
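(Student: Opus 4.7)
The plan is to start from the integral-kernel representation in Remark \ref{rmk:bridge_kernel}, integrate out the Brownian motion $W$ using its independence from the reflected Brownian motion $R^0$, and then apply Brownian scaling to pass from time horizon $T$ to time horizon $1$.

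First, I would specialize \eqref{Equation: Spiked Kernel} to $x=y=0$. The Gaussian prefactor becomes $\frac{2}{\sqrt{2\pi T}}=\sqrt{\frac{2}{\pi T}}$, and the conditional law of $R^0$ given $R^0_T=0$ is by definition the reflected Brownian bridge on $[0,T]$ from $0$ to $0$. Denoting this bridge by $\tilde r$, which is independent of $W$,
\begin{equation*}
K^{\be;w}_T(0,0)=\sqrt{\frac{2}{\pi T}}\,\E_{\tilde r}\bigg[\exp\bigg(-\int_0^T\frac{\tilde r_t}{2}\d t+\int_0^\infty\frac{L_T^a(\tilde r)}{\sqrt{\be}}\d W_a-w\frac{L_T^0(\tilde r)}{2}\bigg)\bigg].
\end{equation*}

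Next, I would take expectation over $W$ first, using Fubini, which is justified because, conditionally on $\tilde r$, the integrand is the exponential of a Gaussian random variable whose variance $\be^{-1}\int_0^\infty L_T^a(\tilde r)^2\d a$ has finite exponential moments (it is bounded by a deterministic multiple of $T\cdot\sup_a L_T^a(\tilde r)$, itself sub-Gaussian by standard local-time estimates for the reflected bridge). Conditionally on $\tilde r$, the stochastic integral $\int_0^\infty \frac{L_T^a(\tilde r)}{\sqrt{\be}}\d W_a$ is a centered Gaussian with variance $\be^{-1}\int_0^\infty L_T^a(\tilde r)^2\d a$, so its exponential moment equals $\exp\big(\frac{1}{2\be}\int_0^\infty L_T^a(\tilde r)^2\d a\big)$. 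This yields
\begin{equation*}
\E[K^{\be;w}_T(0,0)]=\sqrt{\frac{2}{\pi T}}\,\E\bigg[\exp\bigg(-\int_0^T\frac{\tilde r_t}{2}\d t+\frac{1}{2\be}\int_0^\infty L_T^a(\tilde r)^2\d a-w\frac{L_T^0(\tilde r)}{2}\bigg)\bigg].
\end{equation*}

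Finally, I would apply the scaling $\tilde r_t=T^{1/2}r_{t/T}$, where $r$ is a reflected Brownian bridge on $[0,1]$. A direct change of variables gives $\int_0^T \tilde r_t\d t=T^{3/2}\int_0^1 r_s\d s$ and $L_T^0(\tilde r)=T^{1/2}L_1^0(r)$. For the occupation measure, the identity $L_T^a(\tilde r)=T^{1/2}L_1^{a/\sqrt{T}}(r)$ follows from the occupation-times formula applied to both sides, after which a further substitution $b=a/\sqrt{T}$ gives $\int_0^\infty L_T^a(\tilde r)^2\d a=T^{3/2}\int_0^\infty L_1^b(r)^2\d b$. Plugging these three identities into the previous display produces \eqref{eq:exp_kernel}.

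The only delicate point is the local-time scaling identity, since the definition of $L^a_T$ as an $L^1$-limit requires a little care under the spatial rescaling; I would handle this by verifying the occupation-times identity $\int_0^\infty f(a)L^a_T(\tilde r)\d a=\int_0^T f(\tilde r_t)\d t$ for both sides with $f$ a generic test function and invoking uniqueness of the density of the occupation measure. Everything else is a routine combination of Fubini, Gaussian moment generating functions, and Brownian scaling.
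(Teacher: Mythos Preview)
Your proposal is correct and follows essentially the same route as the paper: specialize the kernel formula at $x=y=0$, integrate out $W$ using that the stochastic integral is conditionally Gaussian, and then apply Brownian scaling to pass from $[0,T]$ to $[0,1]$. If anything, you are slightly more careful than the paper in justifying Fubini and the local-time scaling identity.
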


Since the density of $L^0_1(r)$ is known (see e.g. \cite[equation (3)]{Pitman}), it suffices to find the conditional distribution of the functional 
\begin{equation}\label{eq:RBB_func}
\int_0^1 r_t\d t-\int_0^\infty \frac{L^a_1(r)^2}\be\d a
\end{equation}
given $L^0_1(r)$ to compute the right-hand side in \eqref{eq:exp_kernel}. For $\beta=2$, this leads to the following theorem of independent interest. 

\begin{theorem}\label{Theorem: Reflected Bridge-Local Time is Gaussian}
Let $r_t$, $t\in[0,1]$ be a reflected Brownian bridge. Then, for every $\al\ge 0$, the conditional distribution of the functional 
\begin{equation}\label{eq:RBB_func2}
\int_0^1 r_t\d t-\int_0^\infty \frac{L^a_1(r)^2}2\d a
\end{equation}
given $L^0_1(r)=\al$ is Gaussian with mean $-\al/4$ and variance $1/12$.
\end{theorem}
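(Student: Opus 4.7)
The plan is to reduce the identity to a statement about a $0$-dimensional squared Bessel process via a Ray-Knight identification, and then extract the claimed Gaussian by It\^o calculus. Combining the second Ray-Knight theorem applied to $|B|$ with the observation that a reflected Brownian bridge $r$ of length $1$ conditioned on $L^0_1(r)=\alpha$ has the same law as a reflected Brownian motion stopped at the inverse local time $\tau_\alpha:=\inf\{t:L^0_t>\alpha\}$ and conditioned on $\tau_\alpha=1$, the local-time profile $\ell(a):=L^a_1(r)$ conditional on $L^0_1(r)=\alpha$ is distributed as a $0$-dimensional squared Bessel process $X$ started at $\alpha$ and absorbed at $0$, further conditioned on $\int_0^\infty X_a\,\mathrm d a=1$ (the area matches the bridge length via occupation times).

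By occupation times one also has $\int_0^1 r_t\,\mathrm d t=\int_0^\infty a\,\ell(a)\,\mathrm d a$, so the functional to analyze is $F=\int_0^\infty(aX_a-\tfrac12X_a^2)\,\mathrm d a$. Writing $\mathrm d X_a=2\sqrt{X_a}\,\mathrm d W_a$ and applying It\^o's formula to $X_a^2$, $X_a^3$, $aX_a$, and $aX_a^2$ on $[0,M]$ with $M:=\inf\{a:X_a=0\}$ yields, after bookkeeping,
\[
F=\frac{\alpha^3}{16}+\frac{3}{8}\int_0^M X_a^{5/2}\,\mathrm d W_a-\int_0^M aX_a^{3/2}\,\mathrm d W_a,
\]
together with the area constraint $\int_0^M X_a\,\mathrm d a=1$ rewritten as the linear relation $\int_0^M X_a^{3/2}\,\mathrm d W_a=-1-\alpha^2/4$. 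The three It\^o integrals are, conditionally on the path of $X$, jointly Gaussian; after imposing the linear constraint and isolating the piece representing $F$, the goal reduces to showing that its conditional mean equals $-\alpha/4$ and its conditional variance equals $1/12$ (the latter being famously the variance of $\int_0^1 b_t\,\mathrm d t$ for a standard Brownian bridge $b$).

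The main obstacle is that the conditioning on $\{\int X=1\}$ is non-Markovian, so it is not immediate that the quadratic variations above collapse to deterministic constants. If this stochastic-analytic route proves recalcitrant, a robust alternative is to use Proposition~\ref{Proposition: Reflected Bridge Formula} in reverse at $\beta=2$: the white-noise integral $\int L^a\,\mathrm d W_a/\sqrt{2}$ can be Gaussian-averaged in closed form, making $\E[K^{2;w}_T(0,0)]$ a deterministic Feynman-Kac expression; on the other hand, the Bloemendal-Vir\'ag determinantal description~\cite{BV1} of the spiked Airy$_2$ edge gives this quantity an explicit $(T,w)$ formula in terms of Airy functions. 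Comparing and Laplace-inverting in $w$ against the known density of $L^0_1(r)$ yields the conditional Laplace transform of $F$ given $L^0_1(r)=\alpha$, which is immediately recognized as that of a $N(-\alpha/4,1/12)$ random variable.
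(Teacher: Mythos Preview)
Your Route 1 contains a genuine error. You assert that the It\^o integrals $\int_0^M X_a^{5/2}\,\mathrm d W_a$, $\int_0^M aX_a^{3/2}\,\mathrm d W_a$ and $\int_0^M X_a^{3/2}\,\mathrm d W_a$ are ``conditionally on the path of $X$, jointly Gaussian.'' But $X$ is the strong solution of $\mathrm d X_a = 2\sqrt{X_a}\,\mathrm d W_a$ with $X_0=\alpha$, so on $[0,M)$ one recovers $W$ from $X$ via $\mathrm d W_a = (2\sqrt{X_a})^{-1}\,\mathrm d X_a$. Conditioning on the path of $X$ therefore leaves no randomness in these integrals; they are deterministic functionals of $X$, not Gaussian. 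Your algebraic identities (the expression for $F$ and the rewriting of the area constraint) are correct, but they merely re-express $F$ as another deterministic functional of the local-time profile. The substantive claim---that this functional of a $\mathrm{BESQ}(0)$ path conditioned on total area $1$ happens to be Gaussian---is exactly what has to be proved, and your argument does not touch it. The further obstacle you flag (that the area conditioning is non-Markovian and destroys the Brownian law of $W$) is real and is not circumvented by anything you have written.

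The paper sidesteps this by parameterizing in \emph{quantile time} rather than in the level variable. Pitman's extension of Jeulin's theorem states that, conditional on $L^0_1(r)=\alpha$, the process $t\mapsto \tfrac12 L^{h^{-1}(t)}_1(r)$ on $[0,1]$ is a three-dimensional Bessel bridge $b^{\alpha/2}$ from $\alpha/2$ to $0$, with SDE $\mathrm d b = b^{-1}\,\mathrm d t - b(1-t)^{-1}\,\mathrm d t + \mathrm d\widetilde W$. Here the conditioning is already absorbed into the dynamics, so $\widetilde W$ is a genuine standard Brownian motion. After the change of variables, $\big(\int_0^1 r,\, \tfrac12\int L^2\big)$ becomes $\big(\tfrac12\int_0^1 (1-t)/b\,\mathrm d t,\, \int_0^1 b\,\mathrm d t\big)$; integrating the SDE against $(1-t)$ and integrating by parts gives $\int_0^1 (1-t)/b\,\mathrm d t = -\alpha/2 + 2\int_0^1 b\,\mathrm d t - \int_0^1 \widetilde W_t\,\mathrm d t$. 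In the combination $\int r - \tfrac12\int L^2$ the $\int b$ term cancels exactly, leaving $-\alpha/4 - \tfrac12\int_0^1 \widetilde W_t\,\mathrm d t$, which is visibly $N(-\alpha/4,1/12)$. The decisive point your approach lacks is a representation in which the conditioning has been traded for drift, so that the residual driving noise is still Brownian.

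Your Route 2 is too speculative to assess. Bloemendal--Vir\'ag~\cite{BV1} characterize the spiked Airy law through a boundary-value PDE for the eigenvalue distribution, not through a closed Airy-function formula for $\E[K^{2;w}_T(0,0)]$; you would need to produce that formula by an independent calculation before the Laplace-inversion step even begins, and you have not indicated how.
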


\begin{remark}
Conditional on $L^0_1(r)=0$, the process $r_t$, $t\in[0,1]$ is a standard Brownian excursion, so that Theorem \ref{Theorem: Reflected Bridge-Local Time is Gaussian} is a generalization of the distributional identity for the latter found in \cite[Corollary 2.15]{GS} (see also \cite[Theorem 1.1]{H}).  
\end{remark}

As a consequence of Theorem \ref{Theorem: Reflected Bridge-Local Time is Gaussian}, we obtain an explicit formula for $\E\big[K^{2;w}_T(0,0)\big]$. 

\begin{corollary}\label{Corollary: Expectation of Spiked Kernel at Zero}
For any $w\in\R$ and $T>0$, and with
\begin{equation}
C_{w;T}:=\frac{\sqrt{T}(T-4w)}{4\sqrt{2}},
\end{equation}
it holds
\begin{equation}\label{eq: kernel at 0 expl}
\E\big[K^{2;w}_T(0,0)\big]\\
=\sqrt{\frac{2}{\pi T}}\exp\left(\frac{T^3}{96}\right)\Big(1+\sqrt{\pi}C_{w;T}\exp\big(C_{w;T}^2\big)\big(\erf(C_{w;T})+1\big)\Big),
\end{equation}
where $\erf(z):=\frac{2}{\sqrt{\pi}}\int_0^z e^{-a^2}\d a$ denotes the error function.
\end{corollary}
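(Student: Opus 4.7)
The plan is to combine Proposition \ref{Proposition: Reflected Bridge Formula}, specialized to $\be=2$, with the conditional Gaussianity statement of Theorem \ref{Theorem: Reflected Bridge-Local Time is Gaussian}, and then to integrate the resulting expression against the density of $L^0_1(r)$ supplied by \cite[equation (3)]{Pitman}. Throughout I abbreviate $F:=\int_0^1 r_t\,\dd t-\int_0^\infty \tfrac{L^a_1(r)^2}{2}\,\dd a$ and $L:=L^0_1(r)$.

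First I would rewrite \eqref{eq:exp_kernel} at $\be=2$ as $\E[K^{2;w}_T(0,0)]=\sqrt{2/(\pi T)}\,\E[\exp(-\tfrac{T^{3/2}}{2}F-\tfrac{T^{1/2}w}{2}L)]$ and condition on $L=\al$. By Theorem \ref{Theorem: Reflected Bridge-Local Time is Gaussian}, the inner Gaussian moment generating function evaluates to $\exp(\tfrac{T^{3/2}\al}{8}+\tfrac{T^3}{96})$, so after absorbing the $-\tfrac{T^{1/2}w}{2}L$ term the coefficient of $\al$ becomes $\sqrt{T}(T-4w)/8=C_{w;T}/\sqrt{2}$. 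Pulling $e^{T^3/96}$ outside, $\E[K^{2;w}_T(0,0)]$ is reduced to $\sqrt{2/(\pi T)}\,e^{T^3/96}\,\E[\exp(C_{w;T}\,L/\sqrt{2})]$.

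For the last step, I would invoke Pitman's formula for the local time at $0$ of a reflected Brownian bridge of unit length; using also the identity $L^0_1(|B^{\mathrm{br}}|)=2L^0_1(B^{\mathrm{br}})$, this yields the Rayleigh density $p_L(\al)=(\al/4)e^{-\al^2/8}$ on $[0,\infty)$. Completing the square, $-\al^2/8+\al C_{w;T}/\sqrt{2}=-\tfrac{1}{8}(\al-2\sqrt{2}\,C_{w;T})^2+C_{w;T}^2$, and substituting $u=(\al-2\sqrt{2}\,C_{w;T})/(2\sqrt{2})$ (under which $\al/4=(C_{w;T}+u)/\sqrt{2}$ and the lower limit of integration becomes $u=-C_{w;T}$) transforms the remaining integral into $2e^{C_{w;T}^2}\int_{-C_{w;T}}^\infty (C_{w;T}+u)\,e^{-u^2}\,\dd u$. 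The $u$-piece is elementary, $\int_{-C_{w;T}}^\infty u\, e^{-u^2}\,\dd u=\tfrac{1}{2}e^{-C_{w;T}^2}$, and contributes exactly the ``$1$'' in \eqref{eq: kernel at 0 expl} once the $2e^{C_{w;T}^2}$ prefactor cancels; the constant piece equals $C_{w;T}\tfrac{\sqrt{\pi}}{2}(1+\erf(C_{w;T}))$, which after multiplying by the same prefactor produces the remaining $\sqrt{\pi}C_{w;T}e^{C_{w;T}^2}(1+\erf(C_{w;T}))$ summand.

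There is no real conceptual obstacle beyond bookkeeping, but some care is warranted with the scaling constants, since the specific form of the exponent $C_{w;T}^2$ (not $C_{w;T}^2/2$ or $2C_{w;T}^2$) and of the argument $C_{w;T}$ of $\erf$ (not $C_{w;T}/2$ or $\sqrt{2}\,C_{w;T}$) depends sensitively on matching the coefficient of $\al^2$ appearing in $p_L$ with the normalization $C_{w;T}=\sqrt{T}(T-4w)/(4\sqrt{2})$. Concretely, the identifications $2c^2=C_{w;T}^2$ and $c\sqrt{2}=C_{w;T}$ (with $c=C_{w;T}/\sqrt{2}$) must both hold simultaneously after completing the square, and it is precisely the choice $c_2=1/8$ in $p_L(\al)\propto \al\,e^{-c_2\al^2}$ extracted from \cite{Pitman} that makes them do so.
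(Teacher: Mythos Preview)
Your proof is correct and follows essentially the same approach as the paper's: both start from Proposition \ref{Proposition: Reflected Bridge Formula} at $\be=2$, condition on $L^0_1(r)=\al$, apply Theorem \ref{Theorem: Reflected Bridge-Local Time is Gaussian} to evaluate the inner expectation, and then integrate against the Rayleigh density $\tfrac{\al}{4}e^{-\al^2/8}$ obtained from \cite{Pitman}. Your write-up is in fact more explicit than the paper's about the factor-of-two passage from $L^0_1(B^{\mathrm{br}})$ to $L^0_1(r)$ and about the completing-the-square/substitution mechanics of the final integral, whereas the paper simply records the evaluated expression.
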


For $\beta\neq 2$, we were not able to obtain an analogue of Theorem \ref{Theorem: Reflected Bridge-Local Time is Gaussian}. 

\begin{problem}\label{Problem: Joint Laplace}
Find the conditional distribution of the functional in \eqref{eq:RBB_func} given $L^0_1(r)$ for all $\beta>0$.
\end{problem}

\begin{remark}
A simple computation based on Theorem \ref{Theorem: Reflected Bridge-Local Time is Gaussian} shows that the \textit{unconditional} distribution of the random variable
\begin{equation}
A:=\sqrt{12}\,\left(\int_0^1r_{t}\d t-\int_0^\infty \frac{L^a_1(r)^2}2\d a\right)
\end{equation}
has a moment-generating function given by 
\begin{equation}\label{Equation: RBB Laplace}
\E\big[\exp(\kappa A)\big]=\exp\left(\frac{\kappa^2}{2}\right)-\sqrt{\frac{3}{2}}\,\kappa\,\exp\left(2 \kappa^2\right) \erfc\bigg(\sqrt{\frac{3}{2}}\,\kappa\bigg),
\end{equation}
where $\erfc(z):=\frac{2}{\sqrt{\pi}}\int_z^\infty e^{-a^2} \d a$ denotes the complementary error function. Therefore, it seems natural to view the density of $A$ as a sum of the standard Gaussian density and a function that integrates to $0$, which yields a corresponding decomposition of the moments of $A$ (see Table \ref{Table: Some Moments} for the first few moments). In particular, Table \ref{Table: Some Moments} suggests the following formula for the odd moments of $A$: 
\begin{equation}
\E\big[A^{2n-1}\big]=-\frac{2^n (2 n - 1)!}{4 (n - 1)!}\,\sqrt{6\pi},\quad n=1,\,2,\,\ldots,
\end{equation}
leading us to believe that $A$ admits an interesting combinatorial interpretation.

\begin{table}
\caption{The first few moments of $A$}
\begin{center}
\begin{tabular}{|c||c|}
\hline\vspace{-3.5mm}&\\
$\E[A]$&$-\sqrt{6 \pi }/2$\\
\vspace{-4mm}&\\\hline\vspace{-3.5mm}&\\
$\E[A^2]$&$1+6$\\
\vspace{-4mm}&\\\hline\vspace{-3.5mm}&\\
$\E[A^3]$&$-6 \sqrt{6 \pi }$\\
\vspace{-4mm}&\\\hline\vspace{-3.5mm}&\\
$\E[A^4]$&$3+108$\\
\vspace{-4mm}&\\\hline\vspace{-3.5mm}&\\
$\E[A^5]$&$-120 \sqrt{6 \pi }$\\
\vspace{-4mm}&\\\hline\vspace{-3.5mm}&\\
$\E[A^6]$&$15+2646$\\
\vspace{-4mm}&\\\hline\vspace{-3.5mm}&\\
$\E[A^7]$&$-3360  \sqrt{6 \pi }$\\
\hline
\end{tabular}
\begin{tabular}{|c||c|}
\hline\vspace{-3.5mm}&\\
$\E[A^8]$&$105+85032$\\
\vspace{-4mm}&\\\hline\vspace{-3.5mm}&\\
$\E[A^9]$&$-120960 \sqrt{6 \pi }$\\
\vspace{-4mm}&\\\hline\vspace{-3.5mm}&\\
$\E[A^{10}]$&$945+3404430$\\
\vspace{-4mm}&\\\hline\vspace{-3.5mm}&\\
$\E[A^{11}]$&$-5322240\sqrt{6 \pi }$\\
\vspace{-4mm}&\\\hline\vspace{-3.5mm}&\\
$\E[A^{12}]$&$10395+163446660$\\
\vspace{-4mm}&\\\hline\vspace{-3.5mm}&\\
$\E[A^{13}]$&$-276756480\sqrt{6 \pi }$\\
\vspace{-4mm}&\\\hline\vspace{-3.5mm}&\\
$\E[A^{14}]$&$135135+9153449550$\\
\hline
\end{tabular}
\end{center}
\label{Table: Some Moments}
\end{table}
\end{remark}

The remainder of the paper is structured as follows. In Section \ref{sec:strong_inv}, we prove a strong invariance principle for certain non-negative random walks and their families of occupation times, which plays a central role in the proof of Theorem \ref{Theorem: Main}. The proof of the invariance principle is based on the observation that the non-negative random walks in consideration can be viewed as images of simple random walks under the Skorokhod reflection map, and we expect the same idea to apply for a wide variety of constrained discrete processes. Section \ref{Section: Main 1} is devoted to the proof of Theorem \ref{Theorem: Main}. The proof has the following general structure: first, we write the entries of the matrix $\M^{\be;w}_{T;N}$ as expectations of suitable functionals of the non-negative random walks of Section \ref{sec:strong_inv} and the entries of the matrix $M_N^{\be;w}$; then, we derive the limiting behavior of suitably truncated versions of such expectations using the strong invariance principle of Section \ref{sec:strong_inv}; finally, we remove the truncation by obtaining appropriate uniform moment estimates on the functionals involved. In Section \ref{sec:properties}, we show the properties of the limiting operators $\U^{\be;w}_T$, $T\geq0$ listed in Proposition \ref{Proposition: Properties}. Lastly, in Section \ref{sec:Hariya} we establish Theorem \ref{Theorem: Reflected Bridge-Local Time is Gaussian}, as well as Proposition \ref{Proposition: Reflected Bridge Formula} and Corollary \ref{Corollary: Expectation of Spiked Kernel at Zero}. The proof of Theorem \ref{Theorem: Reflected Bridge-Local Time is Gaussian} combines the ideas of \cite{H} with the analogue of Jeulin's theorem for a reflected Brownian bridge conditioned on its local time at zero from \cite[Corollary 16(iii)]{Pitman2}.

\section{A strong invariance principle} \label{sec:strong_inv}

This section focuses on the strong invariance principle for certain non-negative random walks and their families of occupation times, which is at the heart of the proof of Theorem \ref{Theorem: Main}. For starters, we let $Y=(Y_0,\,Y_1,\,\ldots)$ be a random walk on the non-negative integers with transition probabilities 
\begin{equation}
\begin{split}
& \Pr[Y_{n+1}=z+1\,|\,Y_n=z]=\Pr[Y_{n+1}=z-1\,|\,Y_n=z]=\frac12,\quad z=1,\,2,\,\ldots, \\
&\Pr[Y_{n+1}=1\,|\,Y_n=0]=\Pr[Y_{n+1}=0\,|\,Y_n=0]=\frac12.
\end{split}
\end{equation}
In other words, when $Y$ is away from $0$, it behaves like a simple symmetric random walk (SSRW), and when $Y$ is at $0$, it stays at $0$ or moves to $1$ with equal probability.

\medskip

Given $T>0$ and $N\in\N$, we let $k=k(T,N):=\lfloor TN^{2/3} \rfloor$ and $T_k:=kN^{-2/3}$. Moreover, for each $x\ge0$, we define a process $X^{k;x}_t$, $t\in[0,T_k]$ satisfying
\begin{equation}\label{Equation: Lazy Walk Interpolation Points}
\big(X^{k;x}_{0},\,X^{k;x}_{N^{-2/3}},\,\ldots,\,X^{k;x}_{T_k}\big)\deq(Y_0,\,Y_1,\,\ldots,\,Y_k\,|\,Y_0=\lfloor xN^{1/3}\rfloor)
\end{equation}
and interpolating linearly between these time points (see Figure \ref{Figure: Lazy Path} for an illustration). We also introduce the normalized occupation times of $X^{k;x}$ for positive levels:  
\begin{equation}\label{Equation: occ_time>0}
L^a(X^{k;x}):=N^{-1/3}\big|\{t\in[0,T_k]:\,X^{k;x}=aN^{1/3}\}\big|,\quad a>0
\end{equation}
and use the convention
\begin{equation}\label{Equation: occ_time=0}
L^0(X^{k;x}):=\lim_{a\downarrow0}L^a(X^{k;x}).
\end{equation}
Finally, we let
\begin{equation}\label{Equation: horizonal steps}
H(X^{k;x}):=\big|\{t\in [0,T_k]\cap N^{-2/3}\N:\,X^{k;x}_{t-N^{-2/3}}=X^{k;x}_t=0\}\big|
\end{equation}
be the number of the horizontal steps at zero in $\big(X^{k;x}_{0},\,X^{k;x}_{N^{-2/3}},\,\ldots,\,X^{k;x}_{T_k}\big)$. Our strong invariance principle can now be stated as follows. 

\begin{figure}[htbp]
\begin{center}
%
%
\begin{tikzpicture}

\begin{axis}[%
xticklabels={,,},
width=4in,
height=1in,
at={(1.011in,0.642in)},
scale only axis,
xmin=0,
xmax=28,
ymin=-1,
ymax=5,
axis background/.style={fill=white}
]
\addplot [ forget plot, thick]
  table[row sep=crcr]{%
0	3\\
1	2\\
2	3\\
3	4\\
4	5\\
5	4\\
6	5\\
7	4\\
8	3\\
9	2\\
10	1\\
11	2\\
12	1\\
13	0\\
};
\addplot [ forget plot, thick]
  table[row sep=crcr]{%
13	0\\
14	0\\
};
\addplot [ forget plot, thick]
  table[row sep=crcr]{%
14	0\\
15	1\\
16	2\\
17	1\\
18	0\\
19	1\\
20	0\\
};
\addplot [ forget plot, thick]
  table[row sep=crcr]{%
20	0\\
21	0\\
};
\addplot [ forget plot, thick]
  table[row sep=crcr]{%
21	0\\
22	1\\
23	0\\
};
\addplot [ forget plot, thick]
  table[row sep=crcr]{%
23	0\\
24	0\\
};
\addplot [ forget plot, thick]
  table[row sep=crcr]{%
24	0\\
25	1\\
26	2\\
27	3\\
28	2\\
};
\end{axis}
\end{tikzpicture}%
\caption{A sample path of $X^{k;x}$ with $k=28$ and $\lfloor xN^{1/3}\rfloor=3$.}
\label{Figure: Lazy Path}
\end{center}
\end{figure}
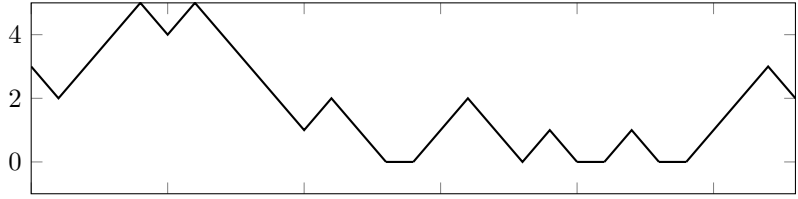

\begin{theorem}\label{Theorem: Coupling}
For every $T>0$ and $x\geq0$, there exists a coupling of the sequence of processes $X^{k;x}$, $N\in\N$ and a reflected Brownian motion $R^x$ such that 
\begin{eqnarray}
&& \sup_{t\in[0,T_k]} \big|N^{-1/3}X^{k;x}_t-R^x_t\big| \leq \C N^{-1/3}\log N,\quad N\in\N, \label{Equation: Coupling of Lazy Walks 1} \\
&& \sup_{a>0}\,\big|L^a(X^{k;x})-L^a_T(R^x)\big| \leq \C N^{-1/16},\quad N\in\N, \label{Equation: Coupling of Lazy Walks 2} \\
&& \big|N^{-1/3}H(X^{k;x})-L^0_T(R^x)/2\big|\leq \C N^{-1/3}\log N,\quad N\in\N, \label{Equation: Coupling of Lazy Walks 3}
\end{eqnarray}
where $\C$ is a suitable finite random variable. 
\end{theorem}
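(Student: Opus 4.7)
The plan rests on the observation that $X^{k;x}$ is the image, under the Skorokhod reflection map at zero, of a simple symmetric random walk (SSRW) $(S_n)$ started at $\lfloor xN^{1/3}\rfloor$. Writing the Skorokhod decomposition $X_n = S_n + L_n$ with $L_n:=\max\bigl(0,-\min_{j\le n}S_j\bigr)$, one checks directly that the prescribed transition law of $X^{k;x}$ is recovered, and that $L_k$ equals exactly the number of horizontal steps $H(X^{k;x})$: the correction $L$ increases by one at step $n$ precisely when $X_{n-1}=X_n=0$.

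The coupling is then built in two stages. First, I would invoke the KMT strong approximation theorem to couple $(S_n-\lfloor xN^{1/3}\rfloor)_{n\le k}$ with a standard Brownian motion $\hat B$ so that $\max_{n\le k}\bigl|S_n-\lfloor xN^{1/3}\rfloor-\hat B_n\bigr|=O(\log N)$ almost surely. Setting $B_t:=N^{-1/3}\hat B_{N^{2/3}t}$ yields a standard Brownian motion such that $N^{-1/3}S_{\lfloor tN^{2/3}\rfloor}$ lies within $O(N^{-1/3}\log N)$ of $x+B_t$ uniformly on $[0,T_k]$ (the linear interpolation contributing only $O(N^{-1/3})$). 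Since the Skorokhod reflection map $\Gamma$ is $2$-Lipschitz in sup-norm, applying it to both paths yields \eqref{Equation: Coupling of Lazy Walks 1} and \eqref{Equation: Coupling of Lazy Walks 3} at once: $R^x:=\Gamma(x+B)$ is a reflected Brownian motion within $O(N^{-1/3}\log N)$ of $N^{-1/3}X^{k;x}$, and the discrete correction $N^{-1/3}L_k$ is within the same tolerance of the continuous Skorokhod correction. The latter equals $\tfrac12 L^0_T(R^x)$, the factor of one-half reflecting the convention in \eqref{Equation: occ_time=0} that $L^0_T$ is the two-sided occupation-density local time (twice the Tanaka local time).

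The main obstacle is \eqref{Equation: Coupling of Lazy Walks 2}, whose weaker rate $N^{-1/16}$ already signals the price of controlling local times uniformly in the level $a$. My plan is to mollify both profiles: for a smooth probability density $\phi_\eps$ concentrated on scale $\eps$, the path coupling of the previous stage gives
\begin{equation*}
\sup_{a>0}\Bigl|\int_0^{T_k}\phi_\eps\bigl(N^{-1/3}X^{k;x}_t-a\bigr)\,\mathrm{d}t-\int_0^T\phi_\eps\bigl(R^x_t-a\bigr)\,\mathrm{d}t\Bigr|\le C\,\eps^{-1}\,N^{-1/3}\log N,
\end{equation*}
while the errors incurred in replacing $L^a_T(R^x)$ and $L^a(X^{k;x})$ by their $\phi_\eps$-smoothings are controlled, respectively, by the spatial modulus of continuity of Brownian local time (H\"older exponent $\tfrac12-\delta$ with a logarithmic correction) and by its SSRW analogue furnished by the Cs\'aki--R\'ev\'esz theory. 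Optimising in $\eps$ gives a polynomial rate; $N^{-1/16}$ sits comfortably within what this balancing can deliver and is not meant to be sharp. The subtlety to manage is the discontinuity of $a\mapsto L^a(X^{k;x})$, which forces a preliminary mesoscopic smoothing in $a$ before the mollification in $t$, and it is at this step that most of the margin over $N^{-1/16}$ is consumed.
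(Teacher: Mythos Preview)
Your strategy coincides with the paper's for \eqref{Equation: Coupling of Lazy Walks 1} and \eqref{Equation: Coupling of Lazy Walks 3}: the paper also represents $X^{k;x}$ as the Skorokhod reflection of a SSRW (Proposition~\ref{Proposition: Discrete Skorokhod}), couples that SSRW to Brownian motion via KMT (Lemma~\ref{Lemma: Coupling of Simple Walks}), and then transfers the estimate through the Lipschitz continuity of $\Gamma$ and of the running-minimum map. Your identification $H(X^{k;x})=L_k$ and the factor of one-half are both correct and match \eqref{eq:loc_time}.

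For \eqref{Equation: Coupling of Lazy Walks 2} the overall scheme---reduce to a spatial modulus-of-continuity estimate for the discrete local-time profile and combine it with the path coupling---is again what the paper does (the mollification step is deferred to \cite[Appendix~B]{GS}). The one point you leave underspecified is how to obtain that modulus of continuity for $a\mapsto L^a(X^{k;x})$: you invoke Cs\'aki--R\'ev\'esz for ``its SSRW analogue'', but $X^{k;x}$ is \emph{not} a SSRW, and the Skorokhod representation $X_n=S_n+L_n$ does not translate visits of $X$ to level $a$ into visits of $S$ to any fixed level. The paper closes this gap with a second auxiliary walk $\bar Y$ on $\mathbb{Z}$, lazy only at the origin, coupled so that $|\bar Y|=Y$ (see~\eqref{eq:whatisYbar} and Remark~\ref{Remark: Lazy Path 2}); then $L^a(X^{k;x})=L^a_{N^{2/3};T}(\bar Y)+L^{-a}_{N^{2/3};T}(\bar Y)$, and after deleting the horizontal steps of $\bar Y$ one is left with a genuine SSRW $v(\bar Y)$ to which the Bass--Khoshnevisan estimate \cite[Proposition~3.1]{BK} applies directly. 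This unfolding is the missing ingredient in your sketch; once it is in place, the rest of your balancing argument goes through.
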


A direct construction of the coupling in Theorem \ref{Theorem: Coupling} appears to be difficult. Instead, our proof of Theorem \ref{Theorem: Coupling} relies on the Koml\'os-Major-Tusn\'ady coupling of a SSRW with a standard Brownian motion and an application of the Skorokhod reflection map. We briefly recall the definition and some properties of the latter. 

\begin{definition}
Given a $T>0$ and a continuous process $Z_t$, $t\in[0,T]$, we define the Skorokhod map evaluated at $Z$ as the continuous process 
\begin{equation}
\Gamma(Z)_t=Z_t+\sup_{s\in[0,t]}\,(-Z_s)_+,\quad t\in[0,T],
\end{equation}
where $(\cdot)_+:=\max(0,\cdot)$ denotes the positive part of a real number.
\end{definition}

A reflected Brownian motion $R^x$ can be defined by $R^x=|x+\widetilde W|$, where $\widetilde W$ is a standard Brownian motion. According to Tanaka's formula (see e.g. \cite[Chapter VI, Theorem 1.2]{RY}) one has
\begin{equation}
R^x_t=x+\int_0^t\sgn(\widetilde W_s)\d \widetilde W_s+L_t^{-x}(\widetilde W),\quad t\in[0,T].
\end{equation}
Furthermore, if we let
\begin{equation}
B^x_t:=x+\int_0^t\sgn(\widetilde W_s)\d \widetilde W_s,\quad t\in[0,T],
\end{equation}
which is a Brownian motion started at $x$, then it follows from a classical result of Skorokhod \cite[Chapter VI, Lemma 2.1 and Corollary 2.2]{RY} that
\begin{eqnarray}
&& R^x_t=B^x_t+\sup_{s\in[0,t]}\,(-B^x_s)_+=\Gamma(B^x)_t,\quad t\in[0,T], \label{eq:refl_map} \\
&& \sup_{s\in[0,t]}\,(-B^x_s)_+=L_t^{-x}(\widetilde W)=L_t^{0}(x+\widetilde W)=\frac{L_t^{0}(R^x)}2,\quad t\in[0,T]. \label{eq:loc_time}
\end{eqnarray}
We give in the next proposition a discrete analogue of these results.

\begin{proposition}\label{Proposition: Discrete Skorokhod}
Let $\widetilde Y=(\widetilde Y_0,\,\widetilde Y_1,\,\ldots)$ be a SSRW
and define the process $\widetilde X^{k;x}$, $t\in[0,T_k]$ by using the same procedure as for $X^{k;x}$ (that is, equation \eqref{Equation: Lazy Walk Interpolation Points} followed by a linear interpolation),
but with with the SSRW $\widetilde Y$ instead of the random walk $Y$.
Then, $Y$ and $\widetilde Y$ can be coupled in such a way that
$X^{k;x}_t=\Gamma(\widetilde X^{k;x})_t$, $t\in[0,T_k]$ and $H(X^{k;x})=\sup_{t\in[0,T_k]}\,(-\widetilde X^{k;x}_t)_+$.
\end{proposition}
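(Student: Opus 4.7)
\medskip

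\noindent\textbf{Proof proposal.}
The plan is to construct the coupling explicitly via the discrete Skorokhod map, then verify it yields the correct transition law for $Y$ and translates correctly to the piecewise-linear interpolations. Start by setting $\widetilde Y_0 := \lfloor xN^{1/3}\rfloor$ and, conditional on this initial value, letting $\widetilde Y$ be a SSRW. Define
\[
M_n:=\max_{0\le j\le n}(-\widetilde Y_j)_+,\qquad Y_n:=\widetilde Y_n+M_n,\quad n\ge 0.
\]
Since $\widetilde Y_0\ge 0$ one has $M_0=0$ and $Y_0=\widetilde Y_0=\lfloor xN^{1/3}\rfloor$, which matches the conditioning in \eqref{Equation: Lazy Walk Interpolation Points}. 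The process $Y$ is clearly non-negative, so I need only check that it has the advertised transition probabilities.

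The verification splits naturally by the state of $Y_n$. If $Y_n\ge 1$, i.e.\ $\widetilde Y_n\ge -M_n+1$, then whether $\widetilde Y_{n+1}=\widetilde Y_n\pm 1$ we still have $\widetilde Y_{n+1}\ge -M_n$, forcing $M_{n+1}=M_n$ and $Y_{n+1}=Y_n\pm 1$ with equal probability. If instead $Y_n=0$, i.e.\ $\widetilde Y_n=-M_n$, then the $+1$ step of $\widetilde Y$ gives $M_{n+1}=M_n$ and $Y_{n+1}=1$, while the $-1$ step gives $M_{n+1}=M_n+1$ and $Y_{n+1}=0$; each happens with probability $1/2$. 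This is exactly the transition law of $Y$ prescribed in the paper, so $Y_n=\widetilde Y_n+M_n$ defines the required coupling at the level of discrete-time walks.

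Next I pass from the walks to the interpolated processes. Because $\widetilde X^{k;x}$ is the piecewise-linear interpolation of $\widetilde Y$ through the grid $N^{-2/3}\Z\cap[0,T_k]$, the running maximum of $-\widetilde X^{k;x}$ is monotone on each interpolation interval and hence is attained at the right endpoint among the grid points seen so far; explicitly, for $t\in[jN^{-2/3},(j+1)N^{-2/3}]$ one has $\sup_{s\le t}(-\widetilde X^{k;x}_s)_+=\max(M_j,(-\widetilde X^{k;x}_t)_+)$. Plugging this into the definition of the Skorokhod map and using linearity on each subinterval, I will check directly that $\Gamma(\widetilde X^{k;x})_t$ is piecewise linear with nodal values $\widetilde Y_j+M_j=Y_j$, which coincides with $X^{k;x}_t$. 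The identity $X^{k;x}=\Gamma(\widetilde X^{k;x})$ on $[0,T_k]$ then follows.

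Finally, $H(X^{k;x})$ counts the grid steps on which $Y$ stays at $0$, and by the case analysis above these are precisely the steps on which $M$ increases by one; hence $H(X^{k;x})=M_k-M_0=M_k$. On the other hand, the same monotonicity argument shows $\sup_{t\in[0,T_k]}(-\widetilde X^{k;x}_t)_+=\max_{0\le j\le k}(-\widetilde Y_j)_+=M_k$, giving the second claimed identity. I expect no serious obstacle here; the only mild care needed is in the interpolation step, where one must verify that the sup in the Skorokhod map is always realized at a grid point so that the continuous-time identity reduces cleanly to its discrete counterpart.
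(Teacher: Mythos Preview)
Your proof is correct and takes a genuinely different route from the paper's. The paper observes that both $X^{k;x}$ and $\widetilde X^{k;x}$ have exactly $2^k$ equiprobable sample paths and then shows that $\Gamma$ is a \emph{bijection} between these two path sets, describing its action explicitly via the first hitting times $\tau_{-1}<\tau_{-2}<\cdots$ of the negative integer levels by $\widetilde X^{k;x}$ (each such hit becomes a horizontal step at zero) and reconstructing $\Gamma^{-1}$ from the locations of the horizontal segments. You instead define $Y_n:=\widetilde Y_n+M_n$ and verify the transition law of $Y$ by a short case split on whether $Y_n=0$. Your argument is more direct and probabilistic, and would extend without change to non-uniform step distributions where the counting argument would need modification; the paper's combinatorial description, on the other hand, gives a more pictorial step-by-step correspondence between the two paths that makes $H(\Gamma(\widetilde X^{k;x}))=-\min\widetilde X^{k;x}$ visually immediate. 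One small correction to your closing sentence: the claim that ``the sup in the Skorokhod map is always realized at a grid point'' is false precisely on the intervals where $Y$ is performing a horizontal step at zero, since there $-\widetilde X^{k;x}$ is strictly increasing past $M_j$ and the running supremum is attained at the current time $t$. But your formula $\sup_{s\le t}(-\widetilde X^{k;x}_s)_+=\max(M_j,(-\widetilde X^{k;x}_t)_+)$ already handles this correctly: on such an interval it yields $\Gamma(\widetilde X^{k;x})_t=\widetilde X^{k;x}_t+(-\widetilde X^{k;x}_t)=0$, which is the linear interpolation between $Y_j=0$ and $Y_{j+1}=0$, so the identity $\Gamma(\widetilde X^{k;x})=X^{k;x}$ holds on every grid interval regardless.
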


\begin{proof}
Both $X^{k;x}$ and $\widetilde X^{k;x}$ can take a total of $2^k$ possible sample paths, and the measures that $Y$ and $\widetilde Y$ induce on these paths are uniform in both cases. Therefore, we need to show that $\Gamma$ is a bijection taking paths of $\widetilde X^{k;x}$ to paths of $X^{k;x}$ and that $H(\Gamma(\widetilde X^{k;x}))=\sup_{t\in[0,T_k]}\,(-\widetilde X^{k;x}_t)_+$. 

\medskip

Whenever $\min\widetilde X^{k;x}\ge0$, the Skorokhod map $\Gamma$ leaves $\widetilde X^{k;x}$ unchanged, and trivially $H(\Gamma(\widetilde X^{k;x}))=0=\sup_{t\in[0,T_k]}\,(-\widetilde X^{k;x}_t)_+$. On the other hand, whenever $\min\widetilde X^{k;x}<0$, the application of $\Gamma$ can be described as follows (see Figure \ref{Figure: Skorokhod Bijection} below): 
\begin{enumerate}[(a)]
\item one determines the first hitting times $\tau_{-1}<\tau_{-2}<\cdots$ of the negative integer levels by $\widetilde X^{k;x}$;
\item one sets $\Gamma(\widetilde X^{k;x})_t:=\widetilde X^{k;x}_t$ for $t\in\big[0,\,\tau_{-1}-N^{-2/3}\big]$;
\item for $j=1,\,2,\,\ldots,\,-\min \widetilde X^{k;x}$, one lets $\Gamma(\widetilde X^{k;x})_t:=0$ for $t\in\big(\tau_{-j}-N^{-2/3},\,\tau_{-j}\big]$; 
\item for $j=1,\,2,\,\ldots,\,-\min \widetilde X^{k;x}-1$, one defines $\Gamma(\widetilde X^{k;x})_t:=\widetilde X^{k;x}_t+j$ for $t\in\big(\tau_{-j},\,\tau_{-j-1}-N^{-2/3}\big]$;
\item one puts $\Gamma(\widetilde X^{k;x})_t:=\widetilde X^{k;x}_t-\min \widetilde X^{k;x}$ for $t\in\big(\tau_{\min\widetilde X^{k;x}},\,T_k\big]$.
\end{enumerate}

\medskip

It follows immediately from this description that the horizontal steps at zero in $\big(\Gamma(\widetilde X^{k;x})_0,\,\Gamma(\widetilde X^{k;x})_{N^{-2/3}},\,\ldots,\,\Gamma(\widetilde X^{k;x})_{T_k}\big)$ occur at $\tau_{-1}-N^{-2/3}$, $\tau_{-2}-N^{-2/3}$, $\ldots$, $\tau_{\min\widetilde X^{k;x}}-N^{-2/3}$, so that $H(\Gamma(\widetilde X^{k;x}))=-\min\widetilde X^{k;x}=\sup_{t\in[0,T_k]}\,(-\widetilde X^{k;x}_t)_+$. Moreover, for every path of $X^{k;x}$, one can uniquely reconstruct the corresponding path $\Gamma^{-1}(X^{k;x})=\widetilde X^{k;x}$ by inferring the sequence $\tau_{-1}<\tau_{-2}<\cdots$ from the horizontal segments in the path of $X^{k;x}$, solving the equations in (b), (d), (e) above, and inserting the remaining $H(X^{k;x})$ downward sloping segments.  
\end{proof}

\begin{figure}[htbp]
\begin{center}
%
%
\begin{tikzpicture}

\begin{axis}[%
xticklabels={,,},
width=4in,
height=1in,
at={(1.011in,0.642in)},
scale only axis,
xmin=0,
xmax=28,
ymin=-3,
ymax=5,
axis background/.style={fill=white}
]
\addplot [ forget plot, thick]
  table[row sep=crcr]{%
0	3\\
1	2\\
2	3\\
3	4\\
4	5\\
5	4\\
6	5\\
7	4\\
8	3\\
9	2\\
10	1\\
11	2\\
12	1\\
13	0\\
};
\addplot [dotted, forget plot, thick]
  table[row sep=crcr]{%
13	0\\
14	-1\\
};
\addplot [ forget plot, thick]
  table[row sep=crcr]{%
14	-1\\
15	0\\
16	1\\
17	0\\
18	-1\\
19	0\\
20	-1\\
};
\addplot [dotted, forget plot, thick]
  table[row sep=crcr]{%
20	-1\\
21	-2\\
};
\addplot [ forget plot, thick]
  table[row sep=crcr]{%
21	-2\\
22	-1\\
23	-2\\
};
\addplot [dotted, forget plot, thick]
  table[row sep=crcr]{%
23	-2\\
24	-3\\
};
\addplot [ forget plot, thick]
  table[row sep=crcr]{%
24	-3\\
25	-2\\
26	-1\\
27	0\\
28	-1\\
};
\end{axis}
\draw (4.4,2) node{$\Gamma^{-1}(X^{k;x})$ or $\widetilde X^{k;x}$};
\end{tikzpicture}%
%
%
\begin{tikzpicture}

\begin{axis}[%
xticklabels={,,},
width=4in,
height=1in,
at={(1.011in,0.642in)},
scale only axis,
xmin=0,
xmax=28,
ymin=-3,
ymax=5,
axis background/.style={fill=white}
]
\addplot [ forget plot, thick]
  table[row sep=crcr]{%
0	3\\
1	2\\
2	3\\
3	4\\
4	5\\
5	4\\
6	5\\
7	4\\
8	3\\
9	2\\
10	1\\
11	2\\
12	1\\
13	0\\
};
\addplot [dotted, forget plot, thick]
  table[row sep=crcr]{%
13	0\\
14	0\\
};
\addplot [ forget plot, thick]
  table[row sep=crcr]{%
14	0\\
15	1\\
16	2\\
17	1\\
18	0\\
19	1\\
20	0\\
};
\addplot [dotted, forget plot, thick]
  table[row sep=crcr]{%
20	0\\
21	0\\
};
\addplot [ forget plot, thick]
  table[row sep=crcr]{%
21	0\\
22	1\\
23	0\\
};
\addplot [dotted, forget plot, thick]
  table[row sep=crcr]{%
23	0\\
24	0\\
};
\addplot [ forget plot, thick]
  table[row sep=crcr]{%
24	0\\
25	1\\
26	2\\
27	3\\
28	2\\
};
\end{axis}
\draw (3.3,2) node{$X^{k;x}$};
\end{tikzpicture}%
\caption{Illustration of the discrete Skorokhod map.}
\label{Figure: Skorokhod Bijection}
\end{center}
\end{figure}
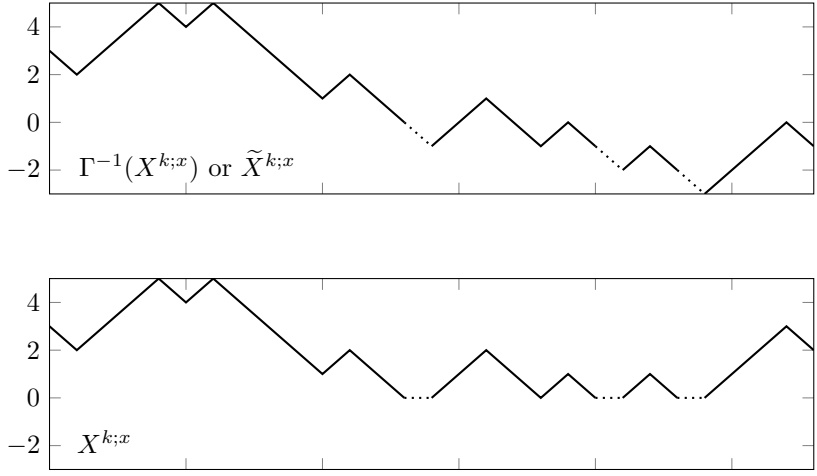

Next, we prepare another coupling needed for the proof of Theorem \ref{Theorem: Coupling}.

\begin{lemma}\label{Lemma: Coupling of Simple Walks}
For every $T>0$ and $x\geq0$, there exists a coupling of the sequence of processes $\widetilde X^{k;x}$, $N\in\N$ defined in Proposition \ref{Proposition: Discrete Skorokhod} and a standard Brownian motion $B$ such that
\begin{equation}\label{Equation: Coupling of Simple Walks 1}
\sup_{t\in[0,T_k]} \big|N^{-1/3}\widetilde X^{k;x}_t-(x+B_t)\big| \leq \C N^{-1/3}\log N,\quad N\in\N,
\end{equation}
where $\C$ is a suitable finite random variable. 
\end{lemma}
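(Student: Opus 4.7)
The plan is to derive Lemma~\ref{Lemma: Coupling of Simple Walks} from the Koml\'os--Major--Tusn\'ady (KMT) strong embedding theorem together with L\'evy's modulus of continuity, handling the uniformity of the random constant $\C$ in $N$ by a Borel--Cantelli argument built on the exponential tails in KMT.

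First I would fix one standard Brownian motion $B$ on an ambient probability space and, for each $N$, form the rescaled process $W^{(N)}_s:=N^{1/3}B_{sN^{-2/3}}$, $s\ge0$, which is itself a standard Brownian motion by Brownian scaling. Enlarging the probability space if necessary, for each $N$ the KMT theorem furnishes a SSRW $(S^{(N)}_j)_{j\ge0}$ coupled to $W^{(N)}$ such that, with universal constants $C_1,K,\lambda>0$ and $k=\lfloor TN^{2/3}\rfloor$,
\begin{equation*}
\Pr\Big(\max_{0\le j\le k}\big|S^{(N)}_j-W^{(N)}_j\big|>C_1\log N+y\Big)\le K\,e^{-\lambda y},\qquad y>0.
\end{equation*}
Setting $\widetilde Y_j:=\lfloor xN^{1/3}\rfloor+S^{(N)}_j$ and interpolating linearly produces $\widetilde X^{k;x}$ as described in Proposition~\ref{Proposition: Discrete Skorokhod}, and realizes all the processes $\widetilde X^{k;x}$, $N\in\N$, on the same probability space as $B$.

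Taking $y=(2/\lambda)\log N$ makes the right-hand side above equal to $KN^{-2}$, which is summable in $N$. Borel--Cantelli then yields an almost surely finite random variable $\C_1$ with $\max_{0\le j\le k}|S^{(N)}_j-W^{(N)}_j|\le\C_1\log N$ for every $N\in\N$ (the finitely many ``bad'' $N$ being absorbed into $\C_1$). Dividing by $N^{1/3}$ and using $|N^{-1/3}\lfloor xN^{1/3}\rfloor-x|\le N^{-1/3}$ gives \eqref{Equation: Coupling of Simple Walks 1} at the grid points $t=jN^{-2/3}$, uniformly in $N$, with a single random constant.

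Finally, to extend from grid points to arbitrary $t\in[0,T_k]$ I would control two discrepancies on subintervals of length $N^{-2/3}$: the piecewise-linear interpolation of $\widetilde X^{k;x}$ differs from its grid values by at most $1$ in unscaled units, hence by at most $N^{-1/3}$ after rescaling; and the oscillation of $B$ on $N^{-2/3}$-intervals inside $[0,T+1]$ is, by L\'evy's modulus of continuity, bounded by $\C_2N^{-1/3}\sqrt{\log N}$ for some almost surely finite $\C_2$. Summing the three contributions and absorbing $\sqrt{\log N}\le\log N$ into a single random constant yields \eqref{Equation: Coupling of Simple Walks 1}. The main technical hurdle is precisely this uniformity in $N$: KMT is essentially optimal at each individual scale, and the exponential-tail improvement it provides is what allows the $\log N$ factor (rather than any larger power of $\log N$) to survive the Borel--Cantelli step that stitches the per-$N$ estimates into one coupling.
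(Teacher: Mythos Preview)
Your proposal is correct and follows essentially the same route as the paper: fix one Brownian motion, rescale it to obtain a standard Brownian motion at each scale $N$, apply KMT per $N$, convert the exponential tails into a summable probability bound, and invoke Borel--Cantelli together with L\'evy's modulus of continuity to pass from grid points to the full interval. Your write-up is in fact slightly more explicit than the paper's on two minor points---the starting-point discrepancy $|N^{-1/3}\lfloor xN^{1/3}\rfloor-x|\le N^{-1/3}$ and the separate treatment of the linear-interpolation error versus the Brownian oscillation---while the paper absorbs both into its one-line appeal to L\'evy's modulus.
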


\begin{proof}
Consider a probability space which supports a standard Brownian motion $B$ and define the standard Brownian motions  
\begin{equation}
B^{(N)}_t:=N^{1/3}B_{tN^{-2/3}},\quad t\geq0
\end{equation}
for all $N\in\N$. According to a well-known procedure of Koml\'os, Major and Tusn\'ady (see e.g. \cite[Section 7]{LL}), one can construct random walks $(\widetilde Y^{(N)}_0,\,\widetilde Y^{(N)}_1,\,\ldots)$ , $N\in\N$ as deterministic functions of the Brownian motions $B^{(N)}$, $N\in\N$, respectively, such that for every $\al>0$, there exists a $C<\infty$ so that
\begin{equation}
\Pr\Big[\max_{0\leq n\leq k} \big|\widetilde Y^{(N)}_n-B^{(N)}_n\big|\geq C\log N\Big]\leq CN^{-\al},\quad N\in\N. 
\end{equation}
As a result, we can couple the sequence $\widetilde X^{k;x}$, $N\in\N$ with $B$ ensuring
\begin{equation}
\Pr\Big[\max_{0\leq i\leq k} \big|N^{-1/3}X^{k;x}_{N^{-2/3}i}-B_{N^{-2/3}i}\big|\geq C N^{-1/3}\log N\Big]\leq CN^{-\al},\quad N\in\N.
\end{equation}
Lastly, we let $\alpha>1$ and conclude by applying the Borel-Cantelli lemma and the L\'evy modulus of continuity theorem.
\end{proof}

Finally, we define the random walk $\bar Y=(\bar Y_0,\,\bar Y_1,\,\ldots)$ on the integers, to be used in the proof of Theorem \ref{Theorem: Coupling}, by 
\begin{equation}\label{eq:whatisYbar}
\begin{split}
& \Pr[\bar Y_{n+1}=z+1\,|\,\bar Y_n=z]=\Pr[\bar Y_{n+1}=z-1\,|\,\bar Y_n=z]=\frac12,\quad z\in\Z\backslash\{0\}, \\
&\Pr[\bar Y_{n+1}=1\,|\,\bar Y_n=0]=\Pr[\bar Y_{n+1}=-1\,|\,\bar Y_n=0]=\frac14,\quad \Pr[\bar Y_{n+1}=0\,|\,\bar Y_n=0]=\frac12.
\end{split}
\end{equation}
For every $n\in\N$, we let
\begin{equation}
H_n(\bar Y)=\big|\{1\le i\leq n:\,\bar Y_{i-1}=\bar Y_i=0\}\big|
\end{equation}
be the number of the horizontal steps at zero among the first $n$ steps of $\bar Y$ and define $v(\bar Y)_n$, $n=0,\,1,\,\ldots$ as the process obtained from $\bar Y$ by removing all the horizontal steps at zero, so that
\begin{equation}\label{Equation: Lazy Path Random Shift}
\bar Y_n=v(\bar Y)_{n-H_n(\bar Y)},\quad n=0,\,1,\,\ldots.
\end{equation}
We also introduce, for $m\in\N$, $T>0$ and $a\neq0$, the normalized occupation times 
\begin{equation}\label{eq:loc_time_norm}
L_{m;T}^a(\bar Y):=m^{-1/2}\big|\{0\le t\le \lfloor mT\rfloor:\,\bar Y_t=\sqrt m a\}\big|,
\end{equation}
where we define $\bar Y$ for non-integer times by linear interpolation. Lastly, we let $L_{m;T}^a(v(\bar Y))$ be given by \eqref{eq:loc_time_norm} with $v(\bar Y)$ in place of $\bar Y$. 

\begin{remark}\label{Remark: Lazy Path 2}
By examining the transition probabilities of $\bar Y$ it becomes clear that $v(\bar Y)_n$, $n=0,\,1,\,\ldots$ is a SSRW, and that $Y$ and $\bar Y$ can be coupled to obey
\begin{equation}
|\bar Y_n|=Y_n,\quad n=0,\,1,\,\ldots,
\end{equation}
provided the two processes have the same starting point.
If we condition on the starting point $Y_0=\bar Y_0=\lfloor N^{1/3}x\rfloor$,
then it holds under this coupling that
\begin{equation}\label{Equation: Lazy Local Time}
L^a(X^{k;x})=L^a_{N^{2/3};T}(\bar Y)+L^{-a}_{N^{2/3};T}(\bar Y),\quad a>0.
\end{equation}
\end{remark}

We conclude the section with the proof of Theorem \ref{Theorem: Coupling}.

\begin{proof}[Proof of Theorem \ref{Theorem: Coupling}]
The maps $\Gamma$ and $f\mapsto \sup_{t\in[0,T]} (-f(t))_+$ are $2$-Lipschitz and $1$-Lipschitz with respect to the supremum norm, respectively, so that \eqref{Equation: Coupling of Lazy Walks 1} and \eqref{Equation: Coupling of Lazy Walks 3} follow from \eqref{Equation: Coupling of Simple Walks 1} by combining Proposition \ref{Proposition: Discrete Skorokhod} with \eqref{eq:refl_map} and \eqref{eq:loc_time}, respectively. 

\medskip

The remaining estimate \eqref{Equation: Coupling of Lazy Walks 2} is a consequence of the estimate \eqref{Equation: Coupling of Lazy Walks 1} and the regularity of the local time processes involved. More specifically, by using the same arguments as in \cite[Appendix B]{GS} 
(note that $a\mapsto L^a_T(R^x)$ inherits the regularity properties of $a\mapsto L^a_T(\widetilde{W})$ due to $R^x=|x+\widetilde W|$), we can reduce \eqref{Equation: Coupling of Lazy Walks 2} to the following statement: for every $\eps>0$, there exists a finite random variable $\C_\eps$ such that
\begin{equation}\label{claim:loc_time_reg}
\sup_{\substack{a_1,a_2>0\\|a_1-a_2|\leq N^{-2/15}}}\big|L^{a_1}(X^{k;x})-L^{a_2}(X^{k;x})\big|\leq\C_\eps N^{-1/15+\eps},\quad N\in\N
\end{equation}
(cf. \cite[Lemma B.3]{GS}). To this end and in view of \eqref{Equation: Lazy Local Time}, it suffices to prove that
\begin{equation}\label{claim:loc_time_reg2}
\sup_{\substack{a_1,a_2\in\R\setminus\{0\}\\|a_1-a_2|\leq N^{-2/15}}} \big|L_{N^{2/3};T}^{a_1}(\bar Y)-L_{N^{2/3};T}^{a_2}(\bar Y)\big|\leq\C_\eps N^{-1/15+\eps},\quad N\in\N.
\end{equation}
Applying \cite[Proposition 3.1]{BK} to the SSRW $v(\bar Y)$ we get
\begin{equation}
\sup_{t\in[0,T]} \sup_{\substack{a_1,a_2\in\R\setminus\{0\}\\|a_1-a_2|\leq N^{-2/15}}}\big|L_{N^{2/3};t}^{a_1}\big(v(\bar Y)\big)-L_{N^{2/3};t}^{a_2}\big(v(\bar Y)\big)\big|\leq\C_\eps N^{-1/15+\eps},\quad N\in\N,
\end{equation}
at which point the desired estimate \eqref{claim:loc_time_reg2} follows from \eqref{Equation: Lazy Path Random Shift}.
\end{proof}

\section{Proof of Theorem \ref{Theorem: Main}}\label{Section: Main 1}

This section is devoted to the proof of Theorem \ref{Theorem: Main}. Since the proof is rather long, we first present an informal overview of the arguments in Subsection \ref{Section: Strategy Main 1}, before rigorously carrying out the proof in Subsections \ref{Section:truncated}--\ref{Section: Final Proof Synthesis}.

\subsection{Informal overview of the proof} \label{Section: Strategy Main 1}

Let $\beta>0$, $w\in\R$, $T>0$ and $f,g\in{\mathcal D}$ be fixed. Our object of study is the scalar product
\begin{equation}
\big(\pi_Nf\big)^\top\M^{\be;w}_{T;N}\big(\pi_Ng\big)
=\sum_{l,l'=0}^N \big(\pi_Nf\big)[l]\cdot\M^{\be;w}_{T;N}[l,l']\cdot\big(\pi_Ng\big)[l'].
\end{equation}
Here and throughout the paper, we index all $(N+1)\times(N+1)$ matrices $A$ by $l,l'\in\{0,\,1,\,\ldots,\,N\}$ and write $A[l,l']$ for the $(l,l')$-entry of $A$. Similarly, the entries of all $(N+1)$-dimensional vectors $v$ are denoted by $v[l]$ for $l\in\{0,\,1,\,\ldots,\,N\}$.
By the definition of $\pi_N$ in \eqref{Equation: Projection}, we then see that
\begin{eqnarray*}
&& \big(\pi_Nf\big)^\top\M^{\be;w}_{T;N}\big(\pi_Ng\big)=\int_0^{N^{2/3}}\int_0^{N^{2/3}} f(x)\,K^{\be;w}_{T;N}(x,y)\,g(y)\,\d x \,\dd y,\quad\text{where} \\
&& K^{\be;w}_{T;N}:=\sum_{l,l'=0}^N N^{1/3}\M^{\be;w}_{T;N}[l,l']
\,\1_{[N^{-1/3}l,N^{-1/3}(l+1))\times[N^{-1/3}l',N^{-1/3}(l'+1))}.
\end{eqnarray*}

Recalling $k=k(T,N)=\lfloor TN^{2/3}\rfloor$ and the definition of $\M^{\be;w}_{T;N}$ in \eqref{eq:script_M}, we find for all $l,l'\in\{0,\,1,\,\ldots,\,N\}$:
\begin{equation}\label{eq:matrix_entry}
\M^{\be;w}_{T;N}[l,l']=\frac1{(2\sqrt{N})^k}\sum_{0\leq l_1,\ldots,l_{k-1}\leq N} M^{\be;w}_N[l,l_1]\,M^{\be;w}_N[l_1,l_2]\,\cdots \,M^{\be;w}_N[l_{k-1},l'].
\end{equation}
Since $M^{\be;w}_N$ is tridiagonal, only $(k+1)$-tuples $(l_0,\,l_1,\,\ldots,\,l_k)$ that satisfy $l_0=l$, $l_k=l'$ and $|l_{s-1}-l_s|\in\{0,1\}$ for all $s$ contribute to $\M^{\be;w}_{T;N}[l,l']$. Any such $(k+1)$-tuple can be thought of as a path from $l_0$ to $l_k$ that takes steps of size $+1$ or $-1$ (when $|l_{s-1}-l_s|=1$), and horizontal steps (when $l_{s-1}=l_s$). In the following, we rely on this observation to write the sum on the right-hand side of \eqref{eq:matrix_entry} in terms of expectations with respect to the random walks of Section \ref{sec:strong_inv}.  

\medskip

For $j=0,\,1,\,\ldots$ and $x\ge0$, we define the random walk $X^{k-j;x}$, its normalized occupation times and its number of horizontal steps at zero by \eqref{Equation: Lazy Walk Interpolation Points}-\eqref{Equation: horizonal steps}, with $(k-j)$ in place of $k$. We also let $\hat X^{k-j;x}_t$, $t\in[0,T_{k-j}-N^{-2/3}H(X^{k-j;x})]$ be the path obtained from $X^{k-j;x}_t$, $t\in[0,T_{k-j}]$ by removing all horizontal segments at zero (see Figure \ref{Figure: Lazy Path Hat}). Finally, we introduce the functional 
\begin{equation}\label{Equation: Genuine Functional}
\begin{split}
& F_j(X^{k-j;x},\a,\xi):=\prod_{i=1}^{k-j-H(X^{k-j;x})}\frac{\sqrt{N-\hat X^{k-j;x}_{N^{-2/3}(i-1)}\land\hat X^{k-j;x}_{N^{-2/3}i}}}{\sqrt{N}} \\
&\qquad\qquad\qquad\quad\;\;\; \cdot\prod_{i=1}^{k-j-H(X^{k-j;x})} \Bigg(1+\frac{\xi_{\hat X^{k-j;x}_{N^{-2/3}(i-1)}\land\hat X^{k-j;x}_{N^{-2/3}i}}}{\sqrt{N-\hat X^{k-j;x}_{N^{-2/3}(i-1)}\land\hat X^{k-j;x}_{N^{-2/3}i}}}\Bigg)
\ell_N^{H(X^{k-j;x})}\\
&\qquad\qquad\qquad\quad\;\;\; \cdot\Bigg(\frac{1}{(2\sqrt{N})^j}\sum_{0\leq i_1\leq\cdots\leq i_j\leq k}\prod_{j'=1}^j\a_{\hat X^{k-j;x}_{N^{-2/3}i_{j'}}}\Bigg),
\end{split}
\end{equation}
where the random walk $X^{k-j;x}$ is independent of all $\a_m$'s and $\xi_m$'s.

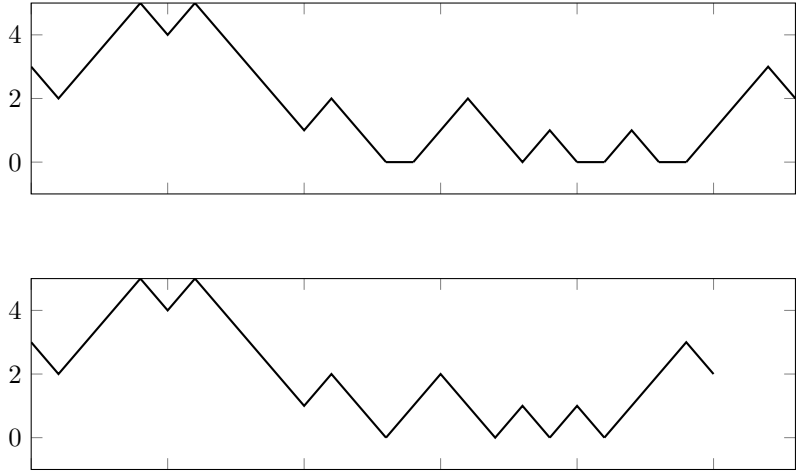
\begin{figure}[htbp]
\begin{center}
%
%
\begin{tikzpicture}

\begin{axis}[%
xticklabels={,,},
width=4in,
height=1in,
at={(1.011in,0.642in)},
scale only axis,
xmin=0,
xmax=28,
ymin=-1,
ymax=5,
axis background/.style={fill=white}
]
\addplot [ forget plot, thick]
  table[row sep=crcr]{%
0	3\\
1	2\\
2	3\\
3	4\\
4	5\\
5	4\\
6	5\\
7	4\\
8	3\\
9	2\\
10	1\\
11	2\\
12	1\\
13	0\\
};
\addplot [ forget plot, thick]
  table[row sep=crcr]{%
13	0\\
14	0\\
};
\addplot [ forget plot, thick]
  table[row sep=crcr]{%
14	0\\
15	1\\
16	2\\
17	1\\
18	0\\
19	1\\
20	0\\
};
\addplot [ forget plot, thick]
  table[row sep=crcr]{%
20	0\\
21	0\\
};
\addplot [ forget plot, thick]
  table[row sep=crcr]{%
21	0\\
22	1\\
23	0\\
};
\addplot [ forget plot, thick]
  table[row sep=crcr]{%
23	0\\
24	0\\
};
\addplot [ forget plot, thick]
  table[row sep=crcr]{%
24	0\\
25	1\\
26	2\\
27	3\\
28	2\\
};
\end{axis}
\end{tikzpicture}%
%
%
\begin{tikzpicture}

\begin{axis}[%
xticklabels={,,},
width=4in,
height=1in,
at={(1.011in,0.642in)},
scale only axis,
xmin=0,
xmax=28,
ymin=-1,
ymax=5,
axis background/.style={fill=white}
]
\addplot [ forget plot, thick]
  table[row sep=crcr]{%
0	3\\
1	2\\
2	3\\
3	4\\
4	5\\
5	4\\
6	5\\
7	4\\
8	3\\
9	2\\
10	1\\
11	2\\
12	1\\
13	0\\
};
\addplot [ forget plot, thick]
  table[row sep=crcr]{%
13	0\\
14	1\\
15	2\\
16	1\\
17	0\\
18	1\\
19	0\\
};
\addplot [ forget plot, thick]
  table[row sep=crcr]{%
19	0\\
20	1\\
21	0\\
};
\addplot [ forget plot, thick]
  table[row sep=crcr]{%
21	0\\
22	1\\
23	2\\
24	3\\
25	2\\
};
\end{axis}
\end{tikzpicture}%
\caption{Realizations of $X^{k-j;x}$ (top) and $\hat X^{k-j;x}$ (bottom).}
\label{Figure: Lazy Path Hat}
\end{center}
\end{figure}

\medskip

If $x\in[N^{-1/3}l,N^{-1/3}(l+1))$ and $y\in[N^{-1/3}l',N^{-1/3}(l'+1))$, then by definition of $M^{\be;w}_N$, one has
\begin{equation}
\M^{\be;w}_{T;N}[l,l']=\sum_{j=0}^k\frac{Q^{x,y}_{k-j}}{2^{k-j}}\,\E_{X^{k-j;x}}\big[F_{j}(X^{k-j;x},\a,\xi)\,\big|\,X^{k-j;x}_{T_{k-j}}=\lfloor N^{1/3}y\rfloor\big],
\end{equation}
with $Q_{k-j}^{x,y}$ being the number of paths $X^{k-j;x}$ can take such that $X^{k-j;x}_{T_{k-j}}=\lfloor N^{1/3}y\rfloor$,
or equivalently,
\begin{equation}
Q_{k-j}^{x,y}:=2^{k-j}\,\Pr\big[X^{k-j;x}_{T_{k-j}}=\lfloor N^{1/3}y\rfloor\big].
\end{equation}
In the above, the parameter $j$ represents the number of times it holds $l_{s-1}=l_s\neq0$ within a $(k+1)$-tuple $(l_0,\,l_1,\,\ldots,\,l_k)$. Removing the corresponding horizontal steps from the associated path leaves us with a path of $X^{k-j;x}$. At the same time, the term in the third line of \eqref{Equation: Genuine Functional} keeps track of all the possible ways $j$ horizontal steps away from zero can be inserted into a given realization of $X^{k-j;x}$. Finally, the term $Q_{k-j}^{x,y}$ arises from the normalization inherent to the conditional expectation $\E_{X^{k-j;x}}[\,\cdot\,|\,X^{k-j;x}_{T_{k-j}}=\lfloor N^{1/3}y\rfloor]$.

\medskip

Next, we let
\begin{equation}
\begin{split}
& \Sc^j_k(f,g):=
\int_0^{N^{2/3}}\int_0^{N^{2/3}}N^{1/3}f(x) \\
&\qquad\qquad\qquad\quad\;\;\cdot\frac{Q_{k-j}^{x,y}}{2^{k-j}}\,
\E_{X^{k-j;x}}\big[F_{j}(X^{k-j;x},\a,\xi)\,\big|\,X^{k-j;x}_{T_{k-j}}=\lfloor N^{1/3}y\rfloor\big]\,g(y)\d x\d y,
\end{split}
\end{equation}
$j=0,\,1,\,\ldots$ and observe
\begin{equation}\label{Equation: Weak Convergence Prelimit}
\big(\pi_Nf\big)^\top\M^{\be;w}_{T;N}\big(\pi_Ng\big)=\sum_{j=0}^k\Sc^j_k(f,g).
\end{equation}
We also note that, by the total probability rule,
\begin{equation}
\Sc^j_k(f,g)=
\int_0^{N^{2/3}} \!\!\!\! f(x)\,\E_{X^{k-j;x}}\bigg[F_j(X^{k-j;x},\a,\xi)
\cdot N^{1/3}\!\int_{N^{-1/3}X^{k-j;x}_{T_{k-j}}}^{N^{-1/3}(X^{k-j;x}_{T_{k-j}}+1)} \!\! g(y)\!\!\d y\bigg]\!\!\!\d x,
\end{equation}
$j=0,\,1,\,\ldots$. The proof of Theorem \ref{Theorem: Main} now hinges on justifying the following heuristic computation.

\begin{heuristic}\label{Heuristic: Weak Convergence}
We recall the strong invariance principle of Theorem \ref{Theorem: Coupling}. Since $\log(1+z)=z+O(z^2)$ when $z\approx0$, we have for $j=0,\,1,\,\ldots$ that
\begin{equation}\label{Equation: H Area Approximation}
\begin{split}
&\prod_{i=1}^{k-j-H(X^{k-j;x})}
\frac{\sqrt{N-\hat X^{k-j;x}_{N^{-2/3}(i-1)}\land\hat X^{k-j;x}_{N^{-2/3}i}}}{\sqrt{N}} \\
& =\exp\Bigg(\frac12 \sum_{i=1}^{k-j-H(X^{k-j;x})}
\log\Bigg(1-\frac{\hat X^{k-j;x}_{N^{-2/3}(i-1)}\land\hat X^{k-j;x}_{N^{-2/3}i}}{N}\Bigg)\Bigg) \\
&\approx\exp\Bigg(-\frac{1}{2N^{2/3}}\sum_{i=1}^{k-j-H(X^{k-j;x})} \Bigg(\frac{\hat X^{k-j;x}_{N^{-2/3}(i-1)}\land\hat X^{k-j;x}_{N^{-2/3}i}}{N^{1/3}}\Bigg)\Bigg)\\
& \to\exp\bigg(-\int_0^T\frac{R^{x}_t}2\d t\Bigg),\quad N\to\infty.
\end{split}
\end{equation}
At the same time, $(1-z)^{-1/2}=1+O(z)$ when $z\approx0$ suggests for $j=0,\,1,\,\ldots$ that
\begin{equation}\label{Equation: H xi Approximation 0}
\begin{split}
&\prod_{i=1}^{k-j-H(X^{k-j;x})}\Bigg(1+\frac{\xi_{\hat X^{k-j;x}_{N^{-2/3}(i-1)}\land\hat X^{k-j;x}_{N^{-2/3}i}}}{\sqrt{N-\hat X^{k-j;x}_{N^{-2/3}(i-1)}\land\hat X^{k-j;x}_{N^{-2/3}i}}}\Bigg) \\
&=\prod_{i=1}^{k-j-H(X^{k-j;x})}\!\!\Bigg(\!1\!+\!\frac{\xi_{\hat X^{k-j;x}_{N^{-2/3}(i-1)}\land\hat X^{k-j;x}_{N^{-2/3}i}}}{\sqrt{N}}\Bigg(\!1\!-\!\frac{\hat X^{k-j;x}_{N^{-2/3}(i-1)}\!\land\!\hat X^{k-j;x}_{N^{-2/3}i}}{N}\Bigg)^{-1/2}\Bigg) \\
&\approx\exp\Bigg(\sum_{i=1}^{k-j-H(X^{k-j;x})} \log\Bigg(1+\frac{\xi_{\hat X^{k-j;x}_{N^{-2/3}(i-1)}\land\hat X^{k-j;x}_{N^{-2/3}i}}}{\sqrt{N}}\Bigg)\Bigg) \\
&\approx\exp\Bigg(\sum_{i=1}^{k-j-H(X^{k-j;x})}\;\frac{\xi_{\hat X^{k-j;x}_{N^{-2/3}(i-1)}\land\hat X^{k-j;x}_{N^{-2/3}i}}}{\sqrt{N}}\Bigg) \\
&=\exp\bigg(\sum_{a\in N^{-1/3}(\N-1/2)} L^a(X^{k-j;x})\,\frac{\xi_{\lfloor N^{1/3}a\rfloor}}{N^{1/6}}\bigg) 
\to\exp\bigg(s_\xi\int_0^\infty L^a_T(R^{x})\d W^\xi_a\bigg),
\end{split}
\end{equation}
as $N\to\infty$, where $W^\xi$ is the Brownian motion arising from a Donsker type invariance principle for the sequence $\xi_0,\,\xi_1,\,\ldots$ (see \eqref{Equation: B.M.s Joint with Truncated} below). Moreover, 
\begin{equation}
\begin{split}
\ell_N^{H(X^{k-j;x})}&=\left(1-\frac{N^{1/3}(1-\ell_N)}{N^{1/3}}\right)^{N^{1/3}\cdot(H(X^{k-j;x})/N^{1/3})}\\
& \to\exp\left(-w\frac{L^0_T(R^{x})}2\right),\quad N\to\infty,
\end{split}
\end{equation}
for $j=0,\,1,\,\ldots$.

\medskip

Next, we consider $j=2$ and make the simple observation
\begin{equation}\label{Equation: Newton trick}
\frac1N\sum_{0\le i_1\leq i_2\leq k}\a_{\hat X^{k-2;x}_{N^{-2/3}i_{1}}}\a_{\hat X^{k-2;x}_{N^{-2/3}i_{2}}} 
=\frac1{2N}\bigg(\sum_{i=0}^k \a_{\hat X^{k-2;x}_{N^{-2/3}i}}\bigg)^2+\frac1{2N}\sum_{i=0}^k \a_{\hat X^{k-2;x}_{N^{-2/3}i}}^2.
\end{equation}
In addition, from $k=O(N^{2/3})$ and Assumption \ref{Assumption: Matrix Assumption} we infer that the second summand on the right-hand side of \eqref{Equation: Newton trick} is negligible in the limit $N\to\infty$. Similar reasoning for $j=3,\,4,\,\ldots$ reveals that, for all $j=1,\,2,\,\ldots$, as $N\to\infty$,
\begin{equation}\label{Equation: H a Approximation}
\begin{split}
&\frac{1}{(2\sqrt{N})^j}\sum_{0\leq i_1\leq\cdots\leq i_j\leq k}\prod_{j'=1}^j\a_{\hat X^{k-j;x}_{N^{-2/3}i_{j'}}}
\approx\frac{1}{j!\,(2\sqrt{N})^j}\bigg(\sum_{i=0}^k \a_{\hat X^{k-j;x}_{N^{-2/3}i}}\bigg)^j \\
&=\frac{1}{j!\,2^j}\bigg(\sum_{a\in N^{-1/3}\N} L^a(X^{k-j;x})\,\frac{\a_{\lfloor N^{1/3}a\rfloor}}{N^{1/6}}\bigg)^j
\to\frac{1}{j!\,2^j}\left(s_\a\int_0^\infty L^a_T(R^{x})\d W^\a_a\right)^j,
\end{split}
\end{equation}
where $W^\a$ is the Brownian motion in a Donsker type invariance principle for the sequence $\a_1,\,\a_2,\,\ldots$ (see \eqref{Equation: B.M.s Joint with Truncated} below). 

\medskip

Finally, the Lebesgue differentiation theorem suggests that
\begin{equation}
N^{1/3}\int_{N^{-1/3}X^{k-j;x}_{T_{k-j}}}^{N^{-1/3}(X^{k-j;x}_{T_{k-j}}+1)}g(y)\d y\to g(R^x_T),\quad N\to\infty. 
\end{equation}

\smallskip

All in all, we expect that, for each $j=0,\,1,\,\ldots$, the quantity $\Sc^j_k(f,g)$ converges, as $N\to\infty$, to
\begin{equation}\label{Equation: H Final Semigroup j-wise}
\begin{split}
\int_0^\infty f(x)\,\E_{R^x}\bigg[\exp\bigg(-\int_0^T\frac{R^x_t}2\d t
+s_\xi\int_0^\infty L_T^a(R^x)\d W^\xi_a-w\frac{L_T^0(R^x)}2\Bigg)\\
\frac{1}{j!}\bigg(\frac{s_\a}{2}\int_0^\infty L^a_T(R^x)\d W^\a_a\bigg)^j \,g(R^x_T)\bigg]\d x.
\end{split}
\end{equation}
Summing over $j=0,\,1,\,\ldots$ and letting $W:=\sqrt{\be}\big(s_\xi W^\xi+\frac{s_\a}{2}W^\a\big)$ we end up precisely with the right-hand side of \eqref{eq:main theorem eq}. 
\end{heuristic}

\subsection{Truncated convergence}\label{Section:truncated}

Our first step in the rigorous proof of Theorem \ref{Theorem: Main} consists in establishing a convergence result for truncated versions of $\Sc^j_k(f,g)$, $j=0,\,1,\,\ldots$. To this end, we define for all $\ubar S\in[-\infty,0]$ and $\bar S\in[0,\infty]$ the truncated functionals
\begin{equation}\label{Equation: Truncated Functional}
\begin{split}
& \widetilde F_j^{(\ubar S,\bar S)}(X^{k-j;x},\a,\xi)=\ubar S\lor \Bigg(\prod_{i=1}^{k-j-H(X^{k-j;x})}\frac{\sqrt{N-\hat X^{k-j;x}_{N^{-2/3}(i-1)}\land\hat X^{k-j;x}_{N^{-2/3}i}}}{\sqrt{N}} \\
&\qquad\qquad\qquad\qquad\cdot\prod_{i=1}^{k-j-H(X^{k-j;x})}\Bigg(1+\frac{\xi_{\hat X^{k-j;x}_{N^{-2/3}(i-1)}\land\hat X^{k-j;x}_{N^{-2/3}i}}}{\sqrt{N-\hat X^{k-j;x}_{N^{-2/3}(i-1)}\land\hat X^{k-j;x}_{N^{-2/3}i}}}\Bigg)\,\ell_N^{H(X^{k-j;x})} \\
&\qquad\qquad\qquad\qquad\cdot\frac{1}{j!\,(2\sqrt{N})^j}\Bigg(\sum_{i=0}^{k-j-H(X^{k-j;x})} \a_{\hat X^{k-j;x}_{N^{-2/3}i}}\Bigg)^j\,\Bigg)\land\bar S,\quad j=0,\,1,\,\ldots
\end{split}
\end{equation}
and for all $K\in\N\cup\{0,\infty\}$ the truncated functions $f_K=f h_K$ and $g_K=g h_K$, where the continuous $h_K:\,[0,\infty)\to[0,1]$ satisfy $h_K\equiv 1$ on $[0,K)$ and $h_K\equiv 0$ on $[2K,\infty)$.  

\begin{remark}
Note that, apart from the truncation at $\ubar S$ and $\bar S$, the functionals $F_j$ and $\widetilde F_j^{(\ubar S,\bar S)}$ differ in the way the $\a_m$'s enter into them.
\end{remark}

We now truncate the terms $\Sc^j_k(f,g)$, $j=0,\,1,\,\ldots$ according to
\begin{equation}
\begin{split}
& \widetilde\Sc^j_k(f,g;\ubar S,\bar S) \\
& :=\int_0^{N^{2/3}} f(x)\,\E_{X^{k-j;x}}\bigg[\widetilde F_j^{(\ubar S,\bar S)}(X^{k-j;x},\a,\xi)\cdot N^{1/3}\int_{N^{-1/3}X^{k-j;x}_{T_{k-j}}}^{N^{-1/3}(X^{k-j;x}_{T_{k-j}}+1)}g(y)\d y\bigg]\d x
\end{split}
\end{equation}
and introduce the limiting operators
\begin{equation}
\begin{split}
\big(\U^{(\ubar S,\bar S)}_{T;j}f\big)(x):=\E_{R^x}\bigg[\bigg(\ubar S\lor\exp\bigg(\!\!-\!\int_0^T \! \frac{R^x_t}2\d t
+s_\xi\int_0^\infty \!\! L_T^a(R^x)\d W^\xi_a-w\frac{L_T^0(R^x)}2\bigg)\\
\cdot\frac{1}{j!}\bigg(\frac{s_\a}{2}\int_0^\infty L^a_T(R^x)\d W^\a_a\bigg)^j\land\bar S\bigg)\,f(R^x_T)\bigg]
\end{split}
\end{equation}
for $f\in{\mathcal D}$ and $j=0,\,1,\,\ldots$. 

\begin{proposition}\label{Proposition: Truncated Convergence}
Let $\ubar S$, $\bar S$ and $K$ be finite. Then, for all functions $f\in{\mathcal D}$ and $g\in {\mathcal D}\cap C([0,\infty))$,
\begin{equation}\label{Equation: Truncated Convergence!}
\lim_{N\to\infty} \widetilde\Sc^j_k(f_K,g_K;\ubar S,\bar S)=\int_0^\infty f_K(x)\,\big(\U^{(\ubar S,\bar S)}_{T;j} g_K\big)(x)\,\mathrm{d}x
\end{equation}
in distribution and in the sense of moments. These convergences hold jointly for any finite collection of $j$'s, $T$'s, $f$'s and $g$'s, and in the case of the convergence in distribution also jointly with the convergences in distribution
\begin{equation}\label{Equation: B.M.s Joint with Truncated}
\lim_{N\to\infty}\sum_{m=0}^{\lfloor N^{1/3}x\rfloor}\frac{\a_m}{N^{1/6}}=s_{\a}W^{\a}_x,\quad x\ge0
\quad\text{and}\quad
\lim_{N\to\infty}\sum_{m=0}^{\lfloor N^{1/3}x\rfloor}\frac{\xi_m}{N^{1/6}}=s_\xi W^{\xi}_x,\quad x\ge0
\end{equation}
with respect to the Skorokhod topology.
\end{proposition}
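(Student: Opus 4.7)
My plan is to couple all the discrete objects to their Brownian counterparts on a single probability space, so that on this space the integrand of $\widetilde\Sc^j_k(f_K,g_K;\ubar S,\bar S)$ converges pathwise to the integrand on the right-hand side of \eqref{Equation: Truncated Convergence!}; the truncation at $\ubar S,\bar S$ and the spatial cutoff $K$ then let me conclude via bounded convergence. More precisely, for each fixed $x\ge0$ I invoke Theorem \ref{Theorem: Coupling} to couple the random walks $X^{k-j;x}$, $N\in\N$, to a reflected Brownian motion $R^x$ satisfying \eqref{Equation: Coupling of Lazy Walks 1}--\eqref{Equation: Coupling of Lazy Walks 3}, and simultaneously use a Koml\'os-Major-Tusn\'ady-type strong approximation (along the lines of the proof of Lemma \ref{Lemma: Coupling of Simple Walks}) to couple the partial sums $N^{-1/6}\sum_{m\le N^{1/3}\cdot}\a_m$ and $N^{-1/6}\sum_{m\le N^{1/3}\cdot}\xi_m$ to Brownian motions $s_\a W^\a$ and $s_\xi W^\xi$ on $[0,N^{2/3}]$, with a uniform error of order $N^{-c}\log N$; the two couplings are independent of each other.

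Granted this coupling, I would establish the pathwise convergence of each of the four factors making up $\widetilde F_j^{(\ubar S,\bar S)}$ exactly as in Heuristic \ref{Heuristic: Weak Convergence}. The drift factor converges by a Riemann-sum argument based on \eqref{Equation: Coupling of Lazy Walks 1}, with the bound $H(X^{k-j;x})=O(N^{1/3})$ from \eqref{Equation: Coupling of Lazy Walks 3} handling the removal of horizontal steps; the factor $\ell_N^{H(X^{k-j;x})}$ converges to $\exp(-wL_T^0(R^x)/2)$ by combining \eqref{Equation: Critical Phase Scaling} with \eqref{Equation: Coupling of Lazy Walks 3}. For the $\xi$-product I would Taylor-expand $\log(1+z)=z-z^2/2+O(z^3)$, use Assumption \ref{Assumption: Matrix Assumption}(c) to control the rare deviations where this expansion fails, and note that the quadratic sum $\sum\xi^2/N$ is $O(k/N)\to0$; the linear part equals the discrete sum $\sum_a L^a(X^{k-j;x})\cdot N^{-1/6}\xi_{\lfloor N^{1/3}a\rfloor}$, which I would identify in the limit with the pathwise It\^o integral $s_\xi\int_0^\infty L^a_T(R^x)\d W_a^\xi$ via \eqref{Equation: Coupling of Lazy Walks 2}, the KMT-approximation of the $\xi$-partial sums by $s_\xi W^\xi$, and the H\"older regularity of $a\mapsto L^a_T(R^x)$, all fed into the pathwise integration framework of \cite{Ka}. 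The $\a$-symmetric function collapses to the $j$-th power of the analogous Donsker sum after a Newton-identity argument, the diagonal corrections being negligible under Assumption \ref{Assumption: Matrix Assumption}(c). Continuity of $g$ and $N^{-1/3}X^{k-j;x}_{T_{k-j}}\to R^x_T$ yield $N^{1/3}\int g_K\to g_K(R^x_T)$ on the final factor.

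With pathwise convergence of the integrand in hand, I apply dominated convergence twice: first inside $\E_{X^{k-j;x}}$, using the uniform bound $\bigl|\widetilde F_j^{(\ubar S,\bar S)}\bigr|\le|\ubar S|\vee|\bar S|$ together with $\|g_K\|_\infty<\infty$, and second in the outer $x$-integral, which is supported on $[0,2K]$ because $f_K$ has compact support. This gives almost sure convergence of $\widetilde\Sc^j_k(f_K,g_K;\ubar S,\bar S)$ on the coupled space, hence convergence in distribution. Joint convergence over any finite collection of $(j,T,f,g)$ and joint with the Donsker limits \eqref{Equation: B.M.s Joint with Truncated} is automatic, since the construction uses a single probability space and the same limiting objects $(R^x,W^\a,W^\xi)$. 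Moment convergence follows from uniform integrability: the bounded truncation and the compact $x$-support keep $\bigl|\widetilde\Sc^j_k(f_K,g_K;\ubar S,\bar S)\bigr|$ uniformly bounded in $N$, so convergence in distribution upgrades to convergence of all moments.

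The most delicate step will be the identification of the discrete It\^o-type sum $\sum_a L^a(X^{k-j;x})\cdot N^{-1/6}\xi_{\lfloor N^{1/3}a\rfloor}$ with the pathwise It\^o integral $s_\xi\int_0^\infty L^a_T(R^x)\d W^\xi_a$, and the analogous statement for the $\a$-sum. The integrand $L^a_T(R^x)$ is a functional of a path that is coupled to $W^\xi$ only through their being defined on the common probability space, so one must combine the uniform approximation of the occupation times \eqref{Equation: Coupling of Lazy Walks 2} with a uniform modulus of continuity in $a$ to transfer the convergence of the discrete Riemann--Stieltjes-type sum to the pathwise It\^o integral. All remaining approximations amount to bookkeeping enabled by the coupling, the truncation, and Assumption \ref{Assumption: Matrix Assumption}.
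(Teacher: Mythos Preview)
Your architecture—couple everything on one space, get pointwise convergence of the integrand, then invoke boundedness from the truncation—is close to the paper's, but the step you yourself flag as most delicate does not close, and the paper handles it by a different mechanism.

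The obstruction is the pathwise identification of $\sum_a L^a(X^{k-j;x})\,N^{-1/6}\xi_{\lfloor N^{1/3}a\rfloor}$ with $s_\xi\int_0^\infty L^a_T(R^x)\,\dd W^\xi_a$. A KMT coupling of the $\xi$-partial sums to $s_\xi W^\xi$ is not available as stated: under Assumption~\ref{Assumption: Matrix Assumption} the $\xi_m$ are neither identically distributed nor fixed as $N$ varies, so the construction behind Lemma~\ref{Lemma: Coupling of Simple Walks} (which is for the \emph{simple} walk) does not transfer. Even granting a Sakhanenko-type strong approximation with the optimal rate $O(N^{-1/6}\log N)$ for the normalized partial sums, neither natural route closes. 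The direct bound $\big|\sum_a[L^a(X)-L^a_T(R^x)]\,N^{-1/6}\xi_a\big|\le N^{-1/16}\cdot N^{-1/6}\sum_a|\xi_a|$ diverges like $N^{5/48}$, since the sum runs over $O(N^{1/3})$ levels. Summation by parts shifts the error onto the total variation of $a\mapsto L^a(X^{k-j;x})$, which is of order $N^{1/6}$—reflecting that Brownian local time has infinite variation in the spatial variable—so that product does not vanish either. Karandikar's pathwise integral does not rescue this: it realizes $\int\phi\,\dd W^\xi$ along path-dependent stopping-time partitions of $W^\xi$, not along the fixed mesh of width $N^{-1/3}$ on which your discrete sum lives.

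The paper forgoes any strong coupling of the $\a$- and $\xi$-sums. Instead it conditions on the walk (coupled to $R^x$ via Theorem~\ref{Theorem: Coupling}), notes that the discrete sum is then a weighted sum of independent random variables with the $L^a(X)$ as deterministic weights, and proves convergence of the \emph{conditional characteristic function} via a Berry--Esseen bound; this is Lemma~\ref{Lemma: Basic Convergence}, specifically \eqref{Equation: Conditional CF 1}--\eqref{Equation: Conditional CF 2}. The resulting convergences \eqref{Equation: Basic Convergence 2} and \eqref{Equation: Basic Convergence 4} hold only in distribution, so the argument is organized through moments: $f_K$ is normalized to a probability density $\mu$, and the $n$-th moment of $\widetilde\Sc^j_k$ is expressed as an expectation over $n$ i.i.d.\ walks with random initial law $\mu$ (display~\eqref{Equation: Truncated Moment Prelimit}). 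This device also absorbs a second loose end in your plan—Theorem~\ref{Theorem: Coupling} furnishes a separate coupling for each starting point, and you have not built a single probability space on which the pathwise convergence holds for all $x\in[0,2K]$ simultaneously before integrating in $x$.
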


The key ingredient in the proof of Proposition \ref{Proposition: Truncated Convergence} is the next lemma. Therein and henceforth, for probability measures $\mu$ on $[0,\infty)$, we use the notations $X^{k;\mu}$ and $R^\mu$ for the random walk $X^{k;x}$ started according to the image of $\mu$ under the map $x\mapsto\lfloor xN^{1/3}\rfloor$ and the reflected Brownian motion $R^x$ started according to $\mu$, respectively.

\begin{lemma}\label{Lemma: Basic Convergence}
Let $n\in\N$ and $\mu_1,\,\mu_2,\,\ldots,\,\mu_n$ be probability measures on $[0,\infty)$. Then, there exists a coupling of independent $X^{k;\mu_1},X^{k;\mu_2},\ldots,X^{k;\mu_n}$ with independent $R^{\mu_1},R^{\mu_2},\ldots,R^{\mu_n}$ such that the following limits in distribution hold jointly over $l=1,\,2,\,\ldots,\,n$, and also jointly with \eqref{Equation: B.M.s Joint with Truncated}, 
\begin{eqnarray}
&& \lim_{N\to\infty}\,\sup_{t\in[0,T_k]} \big|N^{-1/3}X^{k;\mu_l}_t-R^{\mu_l}_t\big|=0, 
\label{Equation: Basic Convergence 1} \\
&& \lim_{N\to\infty}\,\sum_{a\in N^{-1/3}(\N-1/2)} L^a(X^{k;\mu_l})\frac{\xi_{\lfloor N^{1/3}a\rfloor}}{N^{1/6}}
=s_\xi\int_0^\infty L^a_T(R^{\mu_l})\d W^\xi_a,
\label{Equation: Basic Convergence 2} \\
&& \lim_{N\to\infty}\,N^{-1/3}H(X^{k;\mu_l})=\frac{L^0_T(R^{\mu_l})}2,
\label{Equation: Basic Convergence 3} \\
&& \lim_{N\to\infty}\,\sum_{a\in N^{-1/3}\N} L^a(X^{k;\mu_l})\frac{\a_{\lfloor N^{1/3}a\rfloor}}{N^{1/6}}
=s_\a\int_0^\infty L^a_T(R^{\mu_l})\d W^\a_a.
\label{Equation: Basic Convergence 4}
\end{eqnarray}
\end{lemma}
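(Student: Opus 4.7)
The plan is to construct the coupling in three stages and then verify the four claimed limits individually. First, for each $l=1,\ldots,n$, I apply Theorem \ref{Theorem: Coupling} conditionally on the starting point $X^{k;\mu_l}_0=\lfloor xN^{1/3}\rfloor$ and then average over $x\sim\mu_l$ via a measurable selection to obtain a coupling of $X^{k;\mu_l}$ with a reflected Brownian motion $R^{\mu_l}$ satisfying the three estimates \eqref{Equation: Coupling of Lazy Walks 1}--\eqref{Equation: Coupling of Lazy Walks 3}. Second, I take these $n$ couplings to be mutually independent. Third, using Skorokhod embedding (or the Koml\'os-Major-Tusn\'ady construction), I couple the sequences $(\a_m)_{m\ge0}$ and $(\xi_m)_{m\ge0}$ with standard Brownian motions $W^\a$ and $W^\xi$ so that the partial-sum convergences in \eqref{Equation: B.M.s Joint with Truncated} hold almost surely in the Skorokhod topology. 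Assumption \ref{Assumption: Matrix Assumption} guarantees that the random walks and the sequences $(\a_m)$, $(\xi_m)$ are mutually independent, so this product coupling is consistent and in particular $W^\a$, $W^\xi$ are independent of $\{R^{\mu_l}\}_{l=1}^n$.

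Claims \eqref{Equation: Basic Convergence 1} and \eqref{Equation: Basic Convergence 3} follow at once from \eqref{Equation: Coupling of Lazy Walks 1} and \eqref{Equation: Coupling of Lazy Walks 3}. For \eqref{Equation: Basic Convergence 2} and \eqref{Equation: Basic Convergence 4}, I rely on summation by parts. Writing $\Xi^N_a:=N^{-1/6}\sum_{m\le N^{1/3}a}\xi_m$ and recalling that $a\mapsto L^a(X^{k;\mu_l})$ is compactly supported with piecewise constant jumps on the grid $N^{-1/3}\N$, the sum on the left of \eqref{Equation: Basic Convergence 2} is a Riemann-Stieltjes integral of $a\mapsto L^a(X^{k;\mu_l})$ against $\Xi^N$. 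From \eqref{Equation: Coupling of Lazy Walks 2} the integrand converges uniformly in $a$ to the continuous, compactly supported function $a\mapsto L^a_T(R^{\mu_l})$, and $\Xi^N$ converges to $s_\xi W^\xi$ uniformly on compacts by construction. Integrating by parts transfers the differentials onto the occupation-time profile, which is (almost surely) of bounded variation on compacts because it is continuous, compactly supported and has a single local maximum shape typical of a reflected Brownian local time; passing to the limit and integrating by parts back in the limit identifies the limit as $s_\xi\int_0^\infty L^a_T(R^{\mu_l})\d W^\xi_a$, and this is unambiguous precisely because $W^\xi$ is independent of $R^{\mu_l}$ so the Wiener and Stieltjes interpretations coincide. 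Claim \eqref{Equation: Basic Convergence 4} is identical with $\a$ in place of $\xi$, except that the sum now runs over $a\in N^{-1/3}\N$; since the discrepancy between $L^a$ evaluated at grid points $N^{-1/3}\N$ versus $N^{-1/3}(\N-1/2)$ is of order $N^{-1/15+\eps}$ by \eqref{claim:loc_time_reg}, it does not affect the limit.

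The main technical obstacle is showing the Riemann-Stieltjes approximation of the Wiener integral is robust under the joint convergence and handles the two problematic regions. The first is the tail $a\to\infty$: this is harmless because $L^a_T(R^{\mu_l})$ and $L^a(X^{k;\mu_l})$ both vanish for $a$ greater than $\sup_{t\le T}R^{\mu_l}_t+o(1)$, so the integrals are effectively over a compact random interval. The second is a neighborhood of $a=0$ (relevant only for the $\a$-sum, which includes $a=0$): here one must check that the contribution $L^0(X^{k;\mu_l})\,\a_0/N^{1/6}$ vanishes as $N\to\infty$, which follows from the moment bound on $\a_0$ in Assumption \ref{Assumption: Matrix Assumption}(c) together with the tightness of $L^0(X^{k;\mu_l})$ implied by \eqref{Equation: Coupling of Lazy Walks 2}. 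The joint nature of the convergence with \eqref{Equation: B.M.s Joint with Truncated} is automatic from the independence in the coupling construction.
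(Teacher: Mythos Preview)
Your argument has a genuine gap in the treatment of \eqref{Equation: Basic Convergence 2} and \eqref{Equation: Basic Convergence 4}. The summation-by-parts strategy hinges on the assertion that the limiting local-time profile $a\mapsto L^a_T(R^{\mu_l})$ is ``of bounded variation on compacts because it is continuous, compactly supported and has a single local maximum shape.'' This is false: by the Ray--Knight description, the spatial local-time process of (reflected) Brownian motion is a squared Bessel-type diffusion in the level variable, hence H\"older-$1/2^-$ but of \emph{infinite} total variation, with no monotone or unimodal structure. Consequently the Stieltjes integral $\int W^\xi_a\,\mathrm{d}L^a_T(R^{\mu_l})$ that your integration-by-parts would produce is not defined, and there is no way to pass to the limit in the discrete integrated-by-parts expression $-\sum_a \Xi^N(a)\,\Delta L^a(X^{k;\mu_l})$: the uniform smallness of $\Xi^N-s_\xi W^\xi$ is swamped by the blow-up of the total variation of the discrete profile. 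More generally, uniform convergence of integrand and integrator does \emph{not} imply convergence of It\^o-type sums; this is the Wong--Zakai phenomenon, and independence of $L^a_T(R)$ from $W^\xi$ does not by itself cure it.

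A second, independent difficulty: Assumption~\ref{Assumption: Matrix Assumption} explicitly allows the $\a_m$'s and $\xi_m$'s to vary with $N$, so there is no single sequence to which a one-shot KMT or Skorokhod embedding can be applied; you would need a triangular-array strong approximation compatible across $N$, which is not straightforward. The paper's proof circumvents both issues by working with \emph{conditional characteristic functions}: under the coupling of Theorem~\ref{Theorem: Coupling} one freezes the random walk (hence the local times), and then the left-hand sides of \eqref{Equation: Basic Convergence 2}, \eqref{Equation: Basic Convergence 4} become weighted sums of independent variables whose conditional characteristic functions converge, via a quantitative CLT, to those of the centered Gaussians with variances $s_\xi^2\int (L^a_T)^2\,\mathrm{d}a$ and $s_\a^2\int (L^a_T)^2\,\mathrm{d}a$ --- exactly the conditional laws of the Wiener integrals on the right. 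This is the step your argument is missing.
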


\begin{proof} 
The lemma can be obtained from the coupling construction of Theorem \ref{Theorem: Coupling} by the same arguments as in the derivation of \cite[Proposition 4.9]{GS} from the coupling in \cite[Proposition 4.1]{GS}. More specifically, one starts with the case $n=1$ and $\mu_1=\delta_x$ for some $x\ge0$. Then, the joint convergences \eqref{Equation: Basic Convergence 1}-\eqref{Equation: Basic Convergence 4} in distribution are due to the convergence of the associated joint characteristic functions, which under the coupling of Theorem \ref{Theorem: Coupling} is a consequence of the almost sure convergences of the \textit{conditional} characteristic functions 
\begin{equation}\label{Equation: Conditional CF 1}
\begin{split}
& \quad\;\;\; \lim_{N\to\infty}\,
\E_{\xi}\bigg[\exp\bigg(\im\,\theta\,\sum_{a\in N^{-1/3}(\N-1/2)} L^a(X^{k;x})\frac{\xi_{\lfloor N^{1/3}a\rfloor}}{N^{1/6}}\bigg)\bigg]\\  
& \quad\;\;\; =\;\E_{W^\xi}\bigg[\exp\bigg(\im\,\theta\,s_\xi\int_0^\infty L^a_T(R^{x})\d W^\xi_a\bigg)\bigg],
\end{split}
\end{equation}
\begin{equation}\label{Equation: Conditional CF 2}
\begin{split}
& \lim_{N\to\infty}\,\E_{\a}\bigg[\exp\bigg(\im\,\theta\,\sum_{a\in N^{-1/3}\N}L^a(X^{k;x})\frac{\a_{\lfloor N^{1/3}a\rfloor}}{N^{1/6}}\bigg)\bigg] \\
& =\;\E_{W^\a}\bigg[\exp\bigg(\im\,\theta\,s_\a\int_0^\infty L^a_T(R^{x})\d W^\a_a\bigg)\bigg]
\end{split}
\end{equation}
for all $\theta\in\R$ (see \cite[first half of p.~18]{GS} for more details). The latter follow from the central limit theorem in the form of the upper bound in \cite[Theorem 8.4]{BR}, the coupling of Theorem \ref{Theorem: Coupling} and Assumption \ref{Assumption: Matrix Assumption} (see \cite[pp.~18--19]{GS} for more details). 

\medskip

In the case of $n=1$ and a general probability measure $\mu_1$, the joint convergences \eqref{Equation: Basic Convergence 1}-\eqref{Equation: Basic Convergence 4} in distribution can be deduced from the previous case by integrating with respect to $\mu_1$ and relying on the uniform boundedness of characteristic functions. Finally, in the case of $n>1$, one can repeat the same proof, but invoking the multidimensional version of the central limit theorem used before, obtaining this way also the convergences of \eqref{Equation: B.M.s Joint with Truncated} in the sense of convergence of finite-dimensional distributions. The latter can be improved to the desired distributional convergences of processes by applying a standard tightness result (see e.g. \cite[Problem 8.4 and proof of Theorem 8.1]{Bi}). 
\end{proof}

We also prepare the following lemma needed in our proof of Proposition \ref{Proposition: Truncated Convergence}. 

\begin{lemma}\label{Lemma: Basic Convergence 2}
Let $\mu$ be a probability measure on $[0,\infty)$. Then, for each $j=0,\,1,\,\ldots$, under any coupling such that $\lim_{N\to\infty} N^{-1/3}X^{k-j;\mu}_{T_{k-j}}=R^\mu_T$ almost surely, it holds
\begin{equation}
\lim_{N\to\infty}\,N^{1/3}\int_{N^{-1/3}X^{k-j;\mu}_{T_{k-j}}}^{N^{-1/3}(X^{k-j;\mu}_{T_{k-j}}+1)} g(y)\d y=g(R^{\mu}_T)
\end{equation}
with probability one, for any uniformly continuous function $g:\,[0,\infty)\to\R$. 
\end{lemma}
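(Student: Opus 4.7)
The plan is to recognize that the quantity on the left is the average value of $g$ over an interval of length $N^{-1/3}$ located near $R^\mu_T$, and to argue that this average converges to $g(R^\mu_T)$ by combining the assumed convergence $x_N := N^{-1/3}X^{k-j;\mu}_{T_{k-j}}\to R^\mu_T$ almost surely with the uniform continuity of $g$.

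Concretely, I would first rewrite the integral via the change of variables $y = x_N + u N^{-1/3}$ to obtain
\begin{equation*}
N^{1/3}\int_{x_N}^{x_N + N^{-1/3}} g(y)\,\mathrm{d}y \;=\; \int_0^1 g\bigl(x_N + u N^{-1/3}\bigr)\,\mathrm{d}u.
\end{equation*}
Then I would estimate the difference to $g(R^\mu_T)$ by inserting $g(x_N)$ as an intermediate term:
\begin{equation*}
\Bigl|\int_0^1 g(x_N + u N^{-1/3})\,\mathrm{d}u - g(R^\mu_T)\Bigr|
\;\le\; \sup_{u\in[0,1]} \bigl|g(x_N + u N^{-1/3}) - g(x_N)\bigr| + \bigl|g(x_N) - g(R^\mu_T)\bigr|.
\end{equation*}

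For any fixed $\epsilon > 0$, uniform continuity of $g$ gives a $\delta > 0$ so that $|g(y) - g(z)| < \epsilon$ whenever $|y-z| < \delta$; choosing $N$ large enough that $N^{-1/3} < \delta$ makes the first term bounded by $\epsilon$ deterministically. The second term tends to $0$ almost surely by the assumed almost sure convergence $x_N \to R^\mu_T$ together with the continuity of $g$ (implied by its uniform continuity). Sending $N\to\infty$ and then $\epsilon\downarrow 0$ concludes the proof. There is no real obstacle here; the statement is essentially a deterministic consequence of uniform continuity plus the hypothesized endpoint convergence, and the role of the lemma in the paper is simply to transfer the Lebesgue-differentiation-type step in Heuristic \ref{Heuristic: Weak Convergence} into a pathwise statement that can be combined with Lemma \ref{Lemma: Basic Convergence}.
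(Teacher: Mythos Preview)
Your proof is correct and follows essentially the same approach as the paper: both arguments add and subtract $g(x_N)$, control the resulting integral term by the uniform continuity of $g$ over an interval of length $N^{-1/3}$, and handle the remaining term $|g(x_N)-g(R^\mu_T)|$ by continuity together with the assumed almost sure convergence $x_N\to R^\mu_T$. The only cosmetic difference is your preliminary change of variables $y=x_N+uN^{-1/3}$, which the paper omits.
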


\begin{proof}
It suffices to write
\begin{equation}
\begin{split}
& N^{1/3}\int_{N^{-1/3}X^{k-j;\mu}_{T_{k-j}}}^{N^{-1/3}(X^{k-j;\mu}_{T_{k-j}}+1)}g(y)\d y \\
& =N^{1/3}\int_{N^{-1/3}X^{k-j;\mu}_{T_{k-j}}}^{N^{-1/3}(X^{k-j;\mu}_{T_{k-j}}+1)}g(y)-g\big(N^{-1/3}X^{k-j;\mu}_{T_{k-j}}\big)\d y+g\big(N^{-1/3}X^{k-j;\mu}_{T_{k-j}}\big)
\end{split}
\end{equation}
and to note that the integral on the right-hand side tends to $0$ with probability one, as $N\to\infty$, by the uniform continuity of $g$, whereas $\lim_{N\to\infty} g(N^{-1/3}X^{k-j;\mu}_{T_{k-j}})=g(R^\mu_T)$ almost surely. 
\end{proof}

We are now ready to prove Proposition \ref{Proposition: Truncated Convergence}. 

\begin{proof}[Proof of Proposition \ref{Proposition: Truncated Convergence}]
Let us first consider fixed $j$, $T$, $f$ and $g$. Since the terms $\widetilde\Sc^j_k(f_K,g_K;\ubar S,\bar S)$ are bounded uniformly in $N$, it suffices to show the convergence of moments. Further, without loss of generality we may assume $f_K\geq0$ and $\int_0^\infty f_K(x)\d x=1$ (otherwise we write $f_K$ as the difference of its positive and negative parts, and the latter as multiples of functions of the described kind). In particular, this allows to define $\mu$ as the probability measure with the density $f_K$.

\medskip

With i.i.d. copies $X^{k-j;\mu_1},\,X^{k-j;\mu_2},\,\ldots,\,X^{k-j;\mu_n}$ of $X^{k-j;\mu}$ and i.i.d. copies $R^{\mu_1},\,R^{\mu_2},\,\ldots,\,R^{\mu_n}$ of $R^\mu$, the $n$-th moment of $\widetilde\Sc^j_k(f_K,g_K;\ubar S,\bar S)$ can be expressed using Fubini's theorem as
\begin{equation}\label{Equation: Truncated Moment Prelimit}
\E\bigg[\prod_{l=1}^n \bigg(\widetilde F_j^{(\ubar S,\bar S)}(X^{k-j;\mu_l},\a,\xi)\cdot N^{1/3}\int_{N^{-1/3} X^{k-j;\mu_l}_{T_{k-j}}}^{N^{-1/3}(X^{k-j;\mu_l}_{T_{k-j}}+1)}g_K(y)\d y\bigg)\bigg],
\end{equation}
whereas the $n$-th moment of $\int_0^\infty f_K(x)\,\big(\U^{(\ubar S,\bar S)}_{T;j} g_K\big)(x)\,\mathrm{d}x$ reads
\begin{equation}\label{Equation: Truncated Moment Limit}
\begin{split}
\E\bigg[\prod_{l=1}^n \bigg(\bigg(\ubar S\lor\exp\bigg(-\int_0^T\frac{R^{\mu_l}_t}2\d t
+s_\xi\int_0^\infty L_T^a(R^{\mu_l})\d W^\xi_a-w\frac{L_T^0(R^{\mu_l})}2\bigg)\\
\cdot \frac{1}{j!}\bigg(\frac{s_\a}{2}\int_0^\infty L_T^a(R^{\mu_l})\d W^\a_a\bigg)^j \land\bar S\bigg)\,g_K(R^{\mu_l}_T)\bigg)\bigg].
\end{split}
\end{equation}
To establish the convergence of the expectation in \eqref{Equation: Truncated Moment Prelimit} to that in \eqref{Equation: Truncated Moment Limit} we work under the coupling of Lemma \ref{Lemma: Basic Convergence} and view the random walks $X^{k-j;\mu_1},\,X^{k-j;\mu_2},\,\ldots,$ $X^{k-j;\mu_n}$ as the respective restrictions of $X^{k;\mu_1},\,X^{k;\mu_2},\,\ldots,\,X^{k;\mu_n}$ to $[0,T_{k-j}]$. Then, $X^{k-j;\mu_1},\,X^{k-j;\mu_2},\,\ldots,$ $X^{k-j;\mu_n}$ inherit the asymptotics \eqref{Equation: Basic Convergence 1}-\eqref{Equation: Basic Convergence 4} from $X^{k;\mu_1}$, $X^{k;\mu_2},\,\ldots,\,X^{k;\mu_n}$, and Lemma \ref{Lemma: Basic Convergence 2} applies, so that  
\begin{equation} \label{Equation: Truncated Functional Convergence 2}
\lim_{N\to\infty}\,N^{1/3}\int_{N^{-1/3} X^{k-j;\mu_l}_{T_{k-j}}}^{N^{-1/3}(X^{k-j;\mu_l}_{T_{k-j}}+1)} g_K(y)\d y=g_K(R^{\mu_l}_T),\quad l=1,\,2,\,\ldots,\,n
\end{equation}
with probability one. 

\smallskip

We proceed to the asymptotics of $\widetilde F_j^{(\ubar S,\bar S)}(X^{k-j;\mu_l},\a,\xi)$, $l=1,\,2,\,\ldots,\,n$. Our first claim is that 
\begin{equation}\label{eq:limit area}
\lim_{N\to\infty}\prod_{i=1}^{k-j-H(X^{k-j;\mu_l})}
\frac{\sqrt{N-\hat X^{k-j;\mu_l}_{N^{-2/3}(i-1)}\land\hat X^{k-j;\mu_l}_{N^{-2/3}i}}}{\sqrt{N}}
=\exp\bigg(-\int_0^T\frac{R^{\mu_l}_t}2\d t\bigg)
\end{equation}
for $l=1,\,2,\,\ldots,\,n$ almost surely. Indeed, for every such $l$, according to the Taylor expansion $\log(1+z)=z+O(z^2)$ about $z=0$, an approximation as in the third line of \eqref{Equation: H Area Approximation} (with $X^{k-j;x}$, $\hat X^{k-j;x}$ replaced by $X^{k-j;\mu_l}$, $\hat X^{k-j;\mu_l}$) holds up to a multiplicative error of at most
\begin{equation}
\exp\bigg(O\bigg(N^{-4/3}\Big(\sup_{t\in[0,T_{k-j}]} X^{k-j;\mu_l}_t\Big)^2\bigg)\bigg).
\end{equation}
Writing the resulting approximation in terms of $X^{k-j;\mu_l}$ we obtain \eqref{eq:limit area} as an elementary consequence of \eqref{Equation: Basic Convergence 1}.

\medskip

Next, we prove the joint convergence in distribution
\begin{equation}\label{eq:xi_part_conv}
\begin{split}
& \lim_{N\to\infty} \prod_{i=1}^{k-j-H(X^{k-j;\mu_l})}
\left(1+\frac{\xi_{\hat X^{k-j;\mu_l}_{N^{-2/3}(i-1)}\land \hat X^{k-j;\mu_l}_{N^{-2/3}i}}}{\sqrt{N-\hat X^{k-j;\mu_l}_{N^{-2/3}(i-1)}\land \hat X^{k-j;\mu_l}_{N^{-2/3}i}}}\right) \\
&=\exp\bigg(s_\xi\int_0^\infty L^a_T(R^{\mu_l})\d W^\xi_a\bigg)
\end{split}
\end{equation}
for $l=1,\,2,\,\ldots,\,n$. To this end, we use the Taylor expansion $(1-z)^{-1/2}=1+O(z)$ about $z=0$ to conclude that, for each $l$, an approximation as in the third line of \eqref{Equation: H xi Approximation 0} (with $X^{k-j;\mu_l}$, $\hat X^{k-j;\mu_l}$ in place of $X^{k-j;x}$, $\hat X^{k-j;x}$) applies up to a modification of each
\begin{equation}
\frac{\xi_{\hat X^{k-j;\mu_l}_{N^{-2/3}(i-1)}\land \hat X^{k-j;\mu_l}_{N^{-2/3}i}}}{\sqrt{N}}\;\;\;\text{to}\;\;\;
\frac{\xi_{\hat X^{k-j;\mu_l}_{N^{-2/3}(i-1)}\land \hat X^{k-j;\mu_l}_{N^{-2/3}i}}}{\sqrt{N}}\bigg(1+O\Big(N^{-1}\sup_{t\in[0,T_{k-j}]} X^{k-j;\mu_l}_t \Big)\bigg).
\end{equation}
At this point, we employ the Taylor expansion $\log(1+z)=z+O(z^2)$ about $z=0$ to obtain an expression as in the fourth line of \eqref{Equation: H xi Approximation 0}, with the summands therein modified to 
\begin{equation}\label{eq:xi_summand_expansion}
\begin{split}
& \frac{\xi_{\hat X^{k-j;\mu_l}_{N^{-2/3}(i-1)}\land \hat X^{k-j;\mu_l}_{N^{-2/3}i}}}{\sqrt{N}}\bigg(1+O\Big(N^{-1}\sup_{t\in[0,T_{k-j}]} X^{k-j;\mu_l}_t \Big)\bigg) \\
& +O\Bigg(\frac{\big(\xi_{\hat X^{k-j;\mu_l}_{N^{-2/3}(i-1)}\land \hat X^{k-j;\mu_l}_{N^{-2/3}i}}\big)^2}{N}\bigg(1+O\Big(N^{-1}\sup_{t\in[0,T_{k-j}]} X^{k-j;\mu_l}_t \Big)\bigg)^2\Bigg).
\end{split}
\end{equation}
The contribution of the first line in \eqref{eq:xi_summand_expansion} can be evaluated as in the equality on the fifth line of \eqref{Equation: H xi Approximation 0}, which leads to the limit in distribution of \eqref{Equation: Basic Convergence 2} after recalling \eqref{Equation: Basic Convergence 1}. The contribution of the second line in \eqref{eq:xi_summand_expansion} is asymptotically negligible due to the almost sure convergence 
\begin{equation}\label{eq:conv_in_prob_0}
\lim_{N\to\infty} \sum_{i=1}^{k-j-H(X^{k-j;\mu_l})} \;\frac{\big(\xi_{\hat X^{k-j;\mu_l}_{N^{-2/3}(i-1)}\land \hat X^{k-j;\mu_l}_{N^{-2/3}i}}\big)^2}{N}=0
\end{equation}
(simply apply the Borel-Cantelli lemma upon bounding the fourth moment of the latter sum via Assumption \ref{Assumption: Matrix Assumption}(c)) and \eqref{Equation: Basic Convergence 1}. All in all, we arrive at \eqref{eq:xi_part_conv}.

\medskip

Putting \eqref{eq:limit area} and \eqref{eq:xi_part_conv} together with  the almost sure convergences 
\begin{equation}
\ell_N^{H(X^{k-j;\mu_l})}=\bigg(1-\frac{N^{1/3}(1-\ell_N)}{N^{1/3}}\bigg)^{N^{1/3}\cdot(H(X^{k-j;\mu_l})/N^{1/3})}\to\exp\bigg(-w\frac{L_T^0(R^{\mu_l})}2\bigg) 
\end{equation}
for $l=1,\,2,\,\ldots,\,n$ (see \eqref{Equation: Basic Convergence 3}), the convergences in distribution
\begin{equation} 
\lim_{N\to\infty}\frac{1}{j!\,(2\sqrt{N})^j}\Bigg(\sum_{i=0}^{k-j-H(X^{k-j;\mu_l})} \a_{\hat X^{k-j;\mu_l}_{N^{-2/3}i}}\Bigg)^j
=\frac{1}{j!}\bigg(\frac{s_\a}{2}\int_0^\infty L_T^a(R^{\mu_l})\d W^\a_a\bigg)^j
\end{equation}
for $l=1,\,2,\,\ldots,\,n$ (see \eqref{Equation: Basic Convergence 4})  and \eqref{Equation: Truncated Functional Convergence 2} we conclude that the expectation in \eqref{Equation: Truncated Moment Prelimit} converges to that in \eqref{Equation: Truncated Moment Limit}. Moreover, the joint convergence for any finitely many $j$'s, $T$'s, $f$'s and $g$'s can be shown by the same arguments, the only difference being that the formulas for moments in \eqref{Equation: Truncated Moment Prelimit} and \eqref{Equation: Truncated Moment Limit} have to be replaced by the corresponding formulas for joint moments. 
\end{proof}

\subsection{Uniform moment bounds}\label{Section:uniform_moment_bounds}

In this subsection, we establish some uniform moment estimates, which will allow us to lift the truncations and the continuity assumption on the $g$'s of Proposition \ref{Proposition: Truncated Convergence}. To this end, we define the functionals
\begin{equation}
\begin{split}
& \widetilde F_j(X^{k-j;x},\a,\xi)=\prod_{i=1}^{k-j-H(X^{k-j;x})}
\frac{\sqrt{N-\hat X^{k-j;x}_{N^{-2/3}(i-1)}\land\hat X^{k-j;x}_{N^{-2/3}i}}}{\sqrt{N}} \\
&\qquad\qquad\qquad\quad\;\;\; \cdot\prod_{i=1}^{k-j-H(X^{k-j;x})} \left(1+\frac{\xi_{\hat X^{k-j;x}_{N^{-2/3}(i-1)}\land\hat X^{k-j;x}_{N^{-2/3}i}}}{\sqrt{N-\hat X^{k-j;x}_{N^{-2/3}(i-1)}\land\hat X^{k-j;x}_{N^{-2/3}i}}}\right)
\ell_N^{H(X^{k-j;x})} \\
&\qquad\qquad\qquad\quad\;\;\; \cdot\frac{1}{j!\,(2\sqrt{N})^j}\Bigg(\sum_{i=0}^{k-j-H(X^{k-j;x})} \a_{\hat X^{k-j;x}_{N^{-2/3}i}}\Bigg)^j,\quad j=0,\,1,\,\ldots
\end{split}
\end{equation}
and, with any $f,g\in{\mathcal D}$ and random variable $Z_N$ (possibly depending on $X^{k-j;x}$, the $\a_m$'s and the $\xi_m$'s), set
\begin{equation}
\begin{split}
& \widetilde\Sc^j_k(f,g;Z_N) \\
& :=\int_0^{N^{2/3}} \!\!\!\! f(x)\,\E_{X^{k-j;x}}\bigg[\widetilde F_j(X^{k-j;x},\a,\xi)\cdot N^{1/3}\int_{N^{-1/3}X^{k-j;x}_{T_{k-j}}}^{N^{-1/3}(X^{k-j;x}_{T_{k-j}}+1)}g(y)\d y\cdot Z_N\bigg]\!\!\d x
\end{split}
\end{equation}
for $j=0,\,1,\,\ldots$. We also let 
\begin{equation}
\bar\Sc^j_k(f,g,K;Z_N):=\widetilde\Sc^j_k(f,g;Z_N)
-\widetilde\Sc^j_k(f_K,g_K;Z_N),\quad j=0,\,1,\,\ldots
\end{equation}
for $K\in\N\cup\{0\}$.

\begin{proposition}\label{Proposition: Uniform Moment Bounds}
For all $f,g\in{\mathcal D}$, one can find $N_0\in\N$ and $C(K,n)<\infty$ with $\lim_{K\to\infty}C(K,n)=0$ such that, for all $N\ge N_0$, if $Z_N$ satisfies
\begin{equation}\label{eq:ZNmoments}
\E\big[|Z_N|^{3n}\big]\le \Theta(3n),
\end{equation}
one has
\begin{equation}\label{Equation: Uniform Moment Bounds}
\E\Big[\big|\bar\Sc^j_k(f,g,K;Z_N)\big|^n\Big]
\leq\frac{C(K,n)\,\Theta(3n)^{1/3}}{(3/2)^{jn}},\quad K\in\N\cup\{0\},\;\;j=0,\,1,\,\ldots.
\end{equation}
\end{proposition}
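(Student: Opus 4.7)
The first step is to split
\[
\bar\Sc^j_k(f,g,K;Z_N)=\widetilde\Sc^j_k(f-f_K,g;Z_N)+\widetilde\Sc^j_k(f_K,g-g_K;Z_N),
\]
so that in each summand one of the two test functions is supported on $[K,\infty)$; the decay in $K$ will then come from the integrability of $|f|$ or $|g|$ against an exponentially growing weight on $[K,\infty)$, which is finite since $f,g\in\mathcal D$. After taking absolute values inside the expectations, I would apply Jensen's inequality to the outer $x$-integral (against an exponential reference density) and $|\E_X[\cdot]|^n\le\E_X[|\cdot|^n]$ to the inner conditional expectation. H\"older's inequality $\E[AB]\le \E[A^3]^{1/3}\E[B^{3/2}]^{2/3}$ with $A=|Z_N|^n$ then extracts the factor $\Theta(3n)^{1/3}$ via \eqref{eq:ZNmoments}, leaving the task of establishing the uniform-in-$N$ bound
\[
\E\big[\big|\widetilde F_j(X^{k-j;x},\a,\xi)\cdot H_N(g)\big|^{3n/2}\big]\le\frac{C(n)\,e^{cx}}{(3/2)^{3jn/2}},
\]
where $H_N(g)$ abbreviates the averaged-$g$ factor appearing in $\widetilde\Sc^j_k$.

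This bound I would obtain by conditioning on the walk $X^{k-j;x}$ and controlling the four factors of $\widetilde F_j$ and the $H_N(g)$ factor separately. The first product $\prod_i\sqrt{(N-\hat X)/N}$ is trivially bounded by $1$; the factor $\ell_N^{H(X^{k-j;x})}$ is $O(1)$ uniformly in $N$ (since $\ell_N\to 1$ and $H\le k=O(N^{2/3})$); and $H_N(g)$ is dominated via $g\in\mathcal D$ and Gaussian-type moments of $\sup_t X^{k-j;x}_t$. The $\xi$-product is handled by taking logarithm, Taylor-expanding $\log(1+z)=z-\tfrac{z^2}{2}+O(z^3)$, and invoking Assumption \ref{Assumption: Matrix Assumption}(a)--(c); here the centering $|\E[\xi_m]|=o((N-m)^{-1/3})$ is essential to absorb the $O(N^{-1/2})$ bias coming from each factor. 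The $\alpha$-term $\tfrac{1}{j!(2\sqrt N)^j}\big(\sum_i\alpha_{\hat X^{k-j;x}_{N^{-2/3}i}}\big)^j$ is the source of the super-geometric decay in $j$: conditionally on the walk, Marcinkiewicz--Zygmund together with Assumption \ref{Assumption: Matrix Assumption}(c) bounds the $p$-th moment of $\sum_i\alpha_{\hat X^{k-j;x}_{N^{-2/3}i}}$ by $\bigl(C\sqrt{\sum_a n_a^2}\,\bigr)^p p^{\gamma p}$, with $\sum_a n_a^2=O(N)$ on the high-probability event $\{\sup_t X^{k-j;x}_t\le\eta N^{1/3}\}$. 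Raising to the $j$-th power, dividing by $j!(2\sqrt N)^j$, and applying Stirling to $1/(j!)^p$ then produces a bound of order $(C_n/j^{(1-\gamma)j})^p$, which dominates $(3/2)^{-jp}$ once $j$ is large (and the finitely many small-$j$ terms are absorbed into $C(K,n)$).

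The principal obstacle is preserving the uniform-in-$N$ moment control when the walk $X^{k-j;x}$ wanders close to the top level $N$, where the denominators $\sqrt{N-\hat X}$ in the $\xi$-product become dangerous. This is resolved by conditioning on the event $\{\sup_t X^{k-j;x}_t\le\eta N^{1/3}\}$ for a small $\eta>0$, whose complement contributes negligibly thanks to the coupling of Theorem \ref{Theorem: Coupling} and the sub-Gaussian-type tails in Assumption \ref{Assumption: Matrix Assumption}(c), and on this event the $\xi$-factors are harmless. A similar event-splitting handles the rare realizations on which the $\alpha$-sum is atypically large. Combining the four pointwise bounds and integrating against $|f(x)|^n e^{cx}\mathbf 1_{[K,\infty)}(x)$ (and symmetrically for $g$) then yields the required $C(K,n)$, and dominated convergence gives $C(K,n)\to 0$ as $K\to\infty$.
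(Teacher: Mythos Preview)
There is a genuine gap in your plan, and it sits at the point where you discard the product $\prod_i\sqrt{(N-\hat X)/N}$ by bounding it ``trivially by $1$''. This product is not a nuisance term; it is the sole source of exponential decay in the starting point $x$. Recall that $f,g\in\mathcal D$ are only assumed to satisfy $|f(x)|\le C_1 e^{C_2 x^{1-\delta}}$, i.e.\ they may grow sub-exponentially. Your claim that ``the decay in $K$ will then come from the integrability of $|f|$ or $|g|$ against an exponentially growing weight on $[K,\infty)$'' is therefore false: $\int_K^\infty e^{C_2 x^{1-\delta}}\,e^{cx}\,\mathrm dx=\infty$ for any $c>0$. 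With your bound $\E[|\widetilde F_j\cdot H_N(g)|^{3n/2}]\le C(n)e^{cx}/(3/2)^{3jn/2}$ and the Jensen step against an exponential reference density, the resulting $x$-integral simply diverges, so no $C(K,n)$ exists.

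The paper's proof keeps this product and uses $\sqrt{(N-z)/N}\le e^{-z/(2N)}$, so that (via Lemma~\ref{Lemma: Large Deviations}) the first product contributes a factor $Ce^{-cx}$ to $\E[|\widetilde F_j|^{3n}]^{1/(3n)}$. It is this $e^{-cx}$ that beats the sub-exponential growth of $f$ and $g$ and makes $\int_K^\infty|f(x)|\,e^{-cx}\,\mathrm dx\to 0$ as $K\to\infty$. In addition, the paper does not apply Jensen to the $x$-integral; it expands the $n$-th moment as an $n$-fold product, applies Fubini, and then H\"older, arriving at an expression of the form $\big(\int|f(x)|\,\E[|\widetilde F_j|^{3n}]^{1/(3n)}\cdots\,\mathrm dx\big)^n$ where the integrand is integrable. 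A secondary issue: your assertion that $\ell_N^{H(X^{k-j;x})}=O(1)$ ``since $\ell_N\to 1$ and $H\le k=O(N^{2/3})$'' fails when $w<0$ (so $\ell_N>1$), since then $\ell_N^{k}\asymp e^{|w|TN^{1/3}}$; one needs an exponential moment bound on $N^{-1/3}H(X^{k-j;x})$, which the paper supplies in \eqref{Equation: Uniform Moment Estimate 3}. Finally, your localization to $\{\sup_t X^{k-j;x}_t\le\eta N^{1/3}\}$ is vacuous for $x>\eta$; the paper instead splits the $x$-range into $(0,N^{2/3-\eps}]$ and $(N^{2/3-\eps},N^{2/3}]$, extracting the decay differently in each regime.
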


The proof of Proposition \ref{Proposition: Uniform Moment Bounds} relies on the following lemma. 

\begin{lemma}\label{Lemma: Large Deviations}
For all $1\leq p<3$ and $\theta\ge0$, there exist constants $C=C(p,\theta)<\infty$, $c=c(\theta)>0$ and $N_0\in\N$ such that  
\begin{eqnarray}
\label{Equation: Uniform Moment Estimate 1}
&& \sup_{N\ge N_0}\,\E\bigg[\exp\bigg(-\theta N^{-2/3}\sum_{i=1}^k N^{-1/3} X^{k;x}_{N^{-2/3}i}\bigg)\bigg] \le Ce^{-cx},\quad x>0, \\
\label{Equation: Uniform Moment Estimate 2}
&& \sup_{N\ge N_0}\,\E\bigg[\exp\bigg(\theta N^{-1/3}\sum_{a\in N^{-1/3}\N}L^a(X^{k;x})^p\bigg)\bigg]\le C,\quad x>0, \\
\label{Equation: Uniform Moment Estimate 2.1}
&& \sup_{N\ge N_0}\,\E\bigg[\exp\bigg(\theta N^{-1/3}\sum_{a\in N^{-1/3}(\N-1/2)} L^a(X^{k;x})^p\bigg)\bigg]\le C,\quad x>0, \\
\label{Equation: Uniform Moment Estimate 3}
&& \sup_{N\ge N_0}\,\E\Big[\ell_N^{\theta H(X^{x;k})}\Big]\le C,\quad x>0.
\end{eqnarray}
\end{lemma}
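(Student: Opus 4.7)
The plan is to use the strong invariance principle of Theorem \ref{Theorem: Coupling} to reduce each of the four estimates to a moment bound for a corresponding functional of the reflected Brownian motion $R^x$, and then invoke classical tail estimates. Throughout, the coupling error random variable $\C$ originates from the Koml\'os-Major-Tusn\'ady embedding (see the proof of Lemma \ref{Lemma: Coupling of Simple Walks}) and so may be chosen with exponential moments of all orders, uniformly in $N$; in particular $\E[\exp(\lambda\C N^{-1/3}\log N)]\to 1$ as $N\to\infty$ for every $\lambda>0$.

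For \eqref{Equation: Uniform Moment Estimate 1}, first approximate the Riemann sum on the left by $\int_0^{T_k} N^{-1/3}X^{k;x}_t\d t$ with an $O(N^{-1/3})$ error, then use \eqref{Equation: Coupling of Lazy Walks 1} to pass to $\int_0^T R^x_t\d t$ at an additional cost of $\theta T\,\C N^{-1/3}\log N$. It thus suffices to prove $\E[\exp(-\theta\int_0^T R^x_t\d t)]\le Ce^{-cx}$. Setting $T_0:=T\wedge 1$ and using the construction $R^x=|x+\widetilde W|$, split on the event $A:=\{\sup_{t\in[0,T_0]}|\widetilde W_t|\le x/2\}$. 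On $A$ we have $R^x_t\ge x/2$ throughout $[0,T_0]$ and hence $\int_0^T R^x_t\d t\ge xT_0/2$, while $\Pr[A^c]\le 4e^{-x^2/(8T_0)}$ by the reflection principle. Combining yields $e^{-\theta xT_0/2}+4e^{-x^2/(8T_0)}\le Ce^{-cx}$, as needed.

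For \eqref{Equation: Uniform Moment Estimate 2} and \eqref{Equation: Uniform Moment Estimate 2.1}, the key observation is the trivial bound
\begin{equation*}
N^{-1/3}\sum_a L^a(X^{k;x})^p \;\le\; \big(\sup_a L^a(X^{k;x})\big)^{p-1}\cdot N^{-1/3}\sum_a L^a(X^{k;x})\;\le\; T\,\big(\sup_a L^a(X^{k;x})\big)^{p-1},
\end{equation*}
using that the total occupation time $\sum_a L^a(X^{k;x})=N^{-1/3}k=T+O(N^{-2/3})$. By \eqref{Equation: Coupling of Lazy Walks 2}, $\sup_a L^a(X^{k;x})\le \sup_a L^a_T(R^x)+\C N^{-1/16}$, so matters reduce to showing $\E[\exp(\theta' M^{p-1})]<\infty$ where $M:=\sup_a L^a_T(R^x)$ and $\theta'$ is an arbitrary constant. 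This is a classical tail bound: $M$ has a Gaussian tail (e.g.\ by the Barlow-Yor inequalities, or via the Ray-Knight description of $a\mapsto L^a_T$ as a squared Bessel process), so $M^{p-1}$ has finite exponential moments for every $p-1<2$, matching the constraint $p<3$.

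For \eqref{Equation: Uniform Moment Estimate 3}, the critical scaling \eqref{Equation: Critical Phase Scaling} gives $|\log\ell_N|\le C_w N^{-1/3}$ for all sufficiently large $N$. Combined with \eqref{Equation: Coupling of Lazy Walks 3},
\begin{equation*}
\ell_N^{\theta H(X^{k;x})}=\exp\big(\theta H(X^{k;x})\log\ell_N\big)\le \exp\big(C_w\theta\, L^0_T(R^x)/2+C_w\theta\,\C N^{-1/3}\log N\big).
\end{equation*}
Since $L^0_T(R^x)$ has exponential moments of every order (classical from $L^0_T(R^x)\stackrel{d}{=}(\sup_{t\le T}\widetilde W_t-x)_+$ via L\'evy's theorem) and $\C$ has exponential moments uniformly in $N$, the desired bound follows. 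The delicate point common to all four parts is the need to combine the two-scale moment estimates (KMT error $\C$ and the underlying Brownian functional) in a way that gives a bound independent of $N$; the main substantive obstacle is the Gaussian tail of $\sup_a L^a_T(R^x)$, which is what forces the restriction $p<3$ and should be cited from the standard references on Brownian local times.
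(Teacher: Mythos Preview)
Your approach has a genuine gap: you assume that the coupling random variable $\C$ from Theorem \ref{Theorem: Coupling} has exponential moments of all orders, uniformly in $N$ (and implicitly in $x$), but Theorem \ref{Theorem: Coupling} asserts only that $\C$ is almost surely finite. Its proof proceeds via Borel--Cantelli, which yields no moment control whatsoever. While for the path coupling \eqref{Equation: Coupling of Lazy Walks 1} and for \eqref{Equation: Coupling of Lazy Walks 3} one could in principle return to the quantitative KMT tail bound and extract exponential moments for each fixed $N$, the local time coupling \eqref{Equation: Coupling of Lazy Walks 2} is another matter entirely: its proof passes through the H\"older regularity of random walk local times (via \cite[Proposition 3.1]{BK} and the arguments of \cite[Appendix B]{GS}), and extracting exponential tails for the resulting $\C$ would require substantial additional work that you have not supplied. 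Since \eqref{Equation: Uniform Moment Estimate 2} is exactly where you invoke \eqref{Equation: Coupling of Lazy Walks 2}, and you need $\exp(\theta'(\C N^{-1/16})^{p-1})$ integrable with $p-1$ close to $2$, your argument for that part does not close.

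The paper avoids the coupling altogether and works directly with the discrete walks. The single controlling quantity is the range $\rho_k:=\max_{0\le i\le k} v(\bar Y)_i-\min_{0\le i\le k} v(\bar Y)_i$ of the simple random walk underlying $X^{k;x}$ (via Remark \ref{Remark: Lazy Path 2}), for which the sub-Gaussian moment bound $\E[(N^{-1/3}\rho_k)^n]\le\sqrt{n!}\,(CT^{1/2})^n$ is classical. All four estimates then reduce to this one input: for \eqref{Equation: Uniform Moment Estimate 1} one has the deterministic inequality $-X^{k;x}_{N^{-2/3}i}\le -\lfloor N^{1/3}x\rfloor+\rho_k$; for \eqref{Equation: Uniform Moment Estimate 2} and \eqref{Equation: Uniform Moment Estimate 2.1} the combinatorial argument of \cite[Proposition 4.3]{GS} gives $N^{-1/3}\sum_a L^a(\cdot)^p\le C\big((N^{-1/3}\rho_k)^{p-1}+N^{-(p-1)/3}\big)$; and for \eqref{Equation: Uniform Moment Estimate 3} the Skorokhod representation of Proposition \ref{Proposition: Discrete Skorokhod} shows that $H(X^{k;x})$ is stochastically dominated by $\rho_k$. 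This purely discrete route is both shorter and manifestly uniform in $x$, and it is what the restriction $p<3$ actually buys: $\exp\big(\theta(N^{-1/3}\rho_k)^{p-1}\big)$ is integrable precisely when $p-1<2$.
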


\begin{proof}
Recall the random walks $\bar Y$ and $v(\bar Y)$ introduced in \eqref{eq:whatisYbar} and the sentence following it. Throughout this proof, we condition on $\bar Y_0=\lfloor N^{1/3} x\rfloor$ and assume that $\bar Y$ and $X^{k;x}$ are coupled as in Remark \ref{Remark: Lazy Path 2}. We further write
\begin{equation}
\rho_k=\max_{0\le i\le k} v(\bar Y)_i - \min_{0\le i\le k} v(\bar Y)_i
\end{equation}
for the range of the SSRW $v(\bar Y)$ after $k$ steps. It is clear that, for $i=0,\,1,\,\ldots,\,k$,
\begin{equation}\label{eq: bound by range}
-X^{k;x}_{N^{-2/3}i}\leq-\min_{0\le i\le k} \bar Y_i
\leq -\min_{0\le i\le k} v(\bar Y)_i
\leq -\lfloor N^{1/3} x\rfloor+\rho_k.
\end{equation}
According to \cite[inequality (6.2.3)]{Ch} (the case $p=1$ therein), one has
\begin{equation}\label{Equation: RW Range Moments}
\E\Big[\big(N^{-1/3}\rho_k\big)^n\Big]\leq \sqrt{n!}\,\big(CT^{1/2}\big)^n,\quad n\in\N
\end{equation}
with some uniform constant $C<\infty$. Thus, the exponential moment of $N^{-1/3}\rho_k$ can be bounded by a constant independent of $N$ and $x$, yielding \eqref{Equation: Uniform Moment Estimate 1}.

\medskip

In view of \eqref{Equation: Lazy Local Time},
\begin{equation}
L^a(X^{k;x})^p\leq 2^{p-1}\Big(L^a_{N^{2/3};T}\big(v(\bar Y)\big)^p+L^{-a}_{N^{2/3};T}\big(v(\bar Y)\big)^p\Big).
\end{equation}
Hence, it suffices to show \eqref{Equation: Uniform Moment Estimate 2} with $\theta$ replaced by $2^{p-1}\theta$ and $\sum_{a\in N^{-1/3}\N}L^a(X^{k;x})^p$ by $\sum_{a\in N^{-1/3}\Z\setminus\{0\}}L^a_{N^{2/3};T}\big(v(\bar Y)\big)^p$. Repeating the proof of \cite[Proposition 4.3]{GS} verbatim we find a constant $C=C(p)<\infty$ such that 
\begin{equation}
N^{-1/3}\sum_{a\in N^{-1/3}\Z\setminus\{0\}}L^a_{N^{2/3};T}\big(v(\bar Y)\big)^p \leq C\Big(\big(N^{-1/3}\rho_k\big)^{p-1}+N^{-(p-1)/3}\Big)
\end{equation}
(note that even though \cite{GS} considers SSRW bridges, all the combinatorial identities therein apply to SSRWs as well). At this point, \eqref{Equation: Uniform Moment Estimate 2} follows from
\eqref{Equation: RW Range Moments}. Moreover, \eqref{Equation: Uniform Moment Estimate 2.1} is a consequence of  
\begin{equation}
L^a(X^{k;x})\le 2L^{a+N^{-1/3}/2}(X^{k;x}),\quad a\in N^{-1/3}(\N-1/2)
\end{equation}
and \eqref{Equation: Uniform Moment Estimate 2}.

\medskip

From Proposition \ref{Proposition: Discrete Skorokhod} we know that $H(X^{x;k})=\sup_{t\in[0,T_k]} (-\widetilde X^{k;x}_t)_+$ under an appropriate coupling. In particular, $H(X^{x;k})$ is stochastically dominated by $\rho_k$. Consequently, for $N\in\N$ large enough, the random variable inside the expectation in \eqref{Equation: Uniform Moment Estimate 3} is stochastically dominated by
\begin{equation}
\ell_N^{\theta \rho_k}
=\bigg(1-\frac{N^{1/3}(1-\ell_N)}{N^{1/3}}\bigg)^{N^{1/3}\cdot(\theta N^{-1/3}\rho_k)}
=e^{-(w+o(1))\cdot(\theta N^{-1/3}\rho_k)},
\end{equation}
where $o(1)$ is non-random, so that \eqref{Equation: Uniform Moment Estimate 3} also follows from \eqref{Equation: RW Range Moments}.
\end{proof}

\begin{proof}[Proof of Proposition \ref{Proposition: Uniform Moment Bounds}]
After observing
\begin{equation}\label{eq:symmetry}
\begin{split}
& \bar\Sc^j_k(f,g,K;Z_N)=\int_0^{N^{2/3}}\int_0^{N^{2/3}}N^{1/3}f(x) \\
&\qquad\qquad\quad\;\cdot\frac{Q_{k-j}^{x,y}}{2^{k-j}}\,
\E_{X^{k-j;x}}\big[\widetilde F_j(X^{k-j;x},\a,\xi)\cdot Z_N\,\big|\,X^{k-j;x}_{T_{k-j}}=\lfloor N^{1/3}y\rfloor\big]\,g(y)\d x\d y \\
&\qquad\qquad\qquad\qquad -\int_0^{N^{2/3}}\int_0^{N^{2/3}} N^{1/3}f(x)\,h_K(x) \\
&\qquad\;\; \cdot\frac{Q_{k-j}^{x,y}}{2^{k-j}}\,\E_{X^{k-j;x}}\big[\widetilde F_j(X^{k-j;x},\a,\xi)\cdot Z_N\,\big|\,X^{k-j;x}_{T_{k-j}}=\lfloor N^{1/3}y\rfloor\big]\,g(y)\,h_K(y)\d x\d y
\end{split}
\end{equation}
we estimate $\big|\bar\Sc^j_k(f,g,K;Z_N)\big|$ by moving the absolute value inside the double integral and using
\begin{equation}
0\le 1-h_K(x)\,h_K(y)\le \mathbf{1}_{[K,\infty)}(x)+\mathbf{1}_{[K,\infty)}(y),\quad x,y\ge0.  
\end{equation}
Since the roles of the variables $x$ and $y$ are symmetric, we only focus on the term in $\E\big[\big|\bar\Sc^j_k(f,g,K;Z_N)\big|^n\big]$ originating from $\mathbf{1}_{[K,\infty)}(x)$. We bound the latter by inserting an absolute value into the conditional expectation, applying Fubini's theorem and letting $\widetilde{f}_K:=f\,\mathbf{1}_{[K,\infty)}$, thereby obtaining
\begin{equation}\label{eq:Fubini again}
\begin{split}
& \int_{[0,N^{2/3}]^n}\!\! \E\Bigg[\prod_{l=1}^n \big|\widetilde{f}_K(x_l)\big|\,\E_{X^{k-j;x_l}}\bigg[\big|\widetilde F_j(X^{k-j;x_l},\a,\xi)\big| \\
&\qquad\qquad\qquad\qquad\quad\;\;
\cdot N^{1/3}\int_{N^{-1/3}X^{k-j;x_l}_{T_{k-j}}}^{N^{-1/3}(X^{k-j;x_l}_{T_{k-j}}+1)} |g(y)|\d y\cdot |Z_N|\bigg]\Bigg]
\d x_1 \d x_2 \ldots \d x_n.
\end{split}
\end{equation}
A repeated application of H\"older's and Jensen's inequalities shows further that the quantity in \eqref{eq:Fubini again} is at most
\begin{equation}\label{Equation: Product of Integrals}
\begin{split}
\Bigg(\int_0^{N^{2/3}} \big|\widetilde f_K(x)\big|\,\E\big[|Z_N|^{3n}\big]^{1/(3n)}\,\E\Bigg[\Bigg(N^{1/3}\int_{N^{-1/3}X^{k-j;x}_{T_{k-j}}}^{N^{-1/3}(X^{k-j;x}_{T_{k-j}}+1)} |g(y)|\d y\Bigg)^{\!\!3n}\Bigg]^{1/(3n)} 
\;\;\; \\
\cdot\E\Big[\big| \widetilde F_j(X^{k-j;x},\a,\xi)\big|^{3n}\Big]^{1/(3n)}\d x\Bigg)^n.
\end{split}
\end{equation}

\smallskip

Due to $f\in{\mathcal D}$ and \eqref{eq:ZNmoments}, we have
\begin{equation}\label{Equation: f and Z_N}
\big|\widetilde f_K(x)\big|\,\E\big[|Z_N|^{3n}\big]^{1/(3n)}
\leq C_1 e^{C_2 x^{1-\delta}}\,\mathbf{1}_{(K,\infty)}(x)\,\Theta(3n)^{1/3n}.
\end{equation}
In view of $g\in{\mathcal D}$, we can choose $C_1$, $C_2$ and $\delta$ such that also
\begin{equation}
N^{1/3}\int_{N^{-1/3}X^{k-j;x}_{T_{k-j}}}^{N^{-1/3}(X^{k-j;x}_{T_{k-j}}+1)}|g(y)|\d y\leq C_1 \exp\Big(C_2\big(N^{-1/3}X^{k-j;x}_{T_{k-j}})^{1-\delta}\Big).
\end{equation}
Moreover, by the argument leading to \eqref{eq: bound by range} and with the same notation as there,
\begin{equation}
N^{-1/3}X^{k;x}_{T_{k-j}}\leq x+N^{-1/3}\rho_k.
\end{equation}
It then follows from \eqref{Equation: RW Range Moments} that, with some $\widetilde{C}_1=\widetilde{C}_1(C_1,n)<\infty$,
\begin{equation}\label{Equation: Lebesgue Differentiation Bound}
\E\Bigg[\Bigg(N^{1/3}\int_{N^{-1/3}X^{k-j;x}_{T_{k-j}}}^{N^{-1/3}(X^{k-j;x}_{T_{k-j}}+1)}|g(y)|\d y\Bigg)^{\!\!3n}\Bigg]^{1/(3n)}\leq \widetilde{C}_1 e^{C_2x^{1-\delta}}.
\end{equation}

\smallskip

It remains to control $\E\big[|\widetilde F_j(X^{k-j;x},\a,\xi)|^{3n}\big]^{1/(3n)}$. For this purpose, we fix an $\eps\in(0,\delta/3)$ and distinguish the cases $x\in(0,N^{2/3-\eps}]$ and $x\in(N^{2/3-\eps},N^{2/3}]$. In the first case, we use H\"older's inequality to estimate $\E\big[|\widetilde F_j(X^{k-j;x},\a,\xi)|^{3n}\big]^{1/(3n)}$ by the product of the four terms
\begin{eqnarray}
&& \E\Bigg[\Bigg(\prod_{i=1}^{k-j-H(X^{k-j;x})}
\frac{\sqrt{N-\hat X^{k-j;x}_{N^{-2/3}(i-1)}\land\hat X^{k-j;x}_{N^{-2/3}i}}}{\sqrt{N}}\,\Bigg)^{\!\!12n}\Bigg]^{1/(12n)},
\label{Equation: pre Functional Moment Bound 1} \\
&& \E\left[\!\left(\prod_{i=1}^{k-j-H(X^{k-j;x})} \!\left|1\!+\!\frac{\xi_{\hat X^{k-j;x}_{N^{-2/3}(i-1)}\land\hat X^{k-j;x}_{N^{-2/3}i}}}{\sqrt{N-\hat X^{k-j;x}_{N^{-2/3}(i-1)}\land\hat X^{k-j;x}_{N^{-2/3}i}}}\right|\;\right)^{\!\!12n}\right]^{1/(12n)},
\label{Equation: pre Functional Moment Bound 2} \\
&& \E\Big[\ell_N^{12nH(X^{k-j;x})}\Big]^{1/(12n)}, 
\label{Equation: pre Functional Moment Bound 3} \\
&& \E\Bigg[\Bigg(\frac{1}{j!\,(2\sqrt{N})^j}\Bigg|\sum_{i=0}^{k-j-H(X^{k-j;x})} \a_{\hat X^{k-j;x}_{N^{-2/3}i}}\Bigg|^j\;\Bigg)^{\!\! 12n}\Bigg]^{1/(12n)}.
\label{Equation: pre Functional Moment Bound 4}
\end{eqnarray}
Thanks to $\frac{\sqrt{N-z}}{\sqrt{N}}\le e^{-z/(2N)}$, $z\in[0,N]$ and \eqref{Equation: Uniform Moment Estimate 1}, the quantity in \eqref{Equation: pre Functional Moment Bound 1} is not greater than
$Ce^{-cx+O(jN^{-\eps})}$. Turning to the term in \eqref{Equation: pre Functional Moment Bound 2}, we write the expectation with respect to the $\xi_m$'s as a product and note that, due to Assumption \ref{Assumption: Matrix Assumption}(c), \cite[inequality (4.21)]{GS} yields for each factor a bound of the form 
\begin{equation}
\exp\Bigg(\frac{12n L^a(X^{k-j;x})\big|\E[\xi_a]\big|}{N^{-1/3}\sqrt{N-a}}+C'\Bigg(\frac{\big(12n L^a(X^{k-j;x})\big)^2}{N^{-2/3}(N-a)} +\frac{\big(12n L^a(X^{k-j;x})\big)^{\gamma'}}{N^{-\gamma'/3}(N-a)^{\gamma'/2}}\Bigg)\!\!\Bigg),
\end{equation}
with some $C'<\infty$ and $2<\gamma'<3$. For $N\in\N$ large enough, $N-a\ge N/2$ when $L^a(X^{k-j;x})\neq 0$, which with Assumption \ref{Assumption: Matrix Assumption}(a) leads to the expectation of
\begin{equation}
\exp\!\Bigg(C\sum_{a\in N^{-1/3}(\N-1/2)}\!\frac{12nL^a(X^{k-j;x})}{N^{1/2}}+\frac{\big(12n L^a(X^{k-j;x})\big)^2}{N^{1/3}}+\frac{\big(12n L^a(X^{k-j;x})\big)^{\gamma'}}{N^{\gamma'/6}}\Bigg)
\end{equation}
as an estimate on the expectation in \eqref{Equation: pre Functional Moment Bound 2}. In addition, \eqref{Equation: Uniform Moment Estimate 3} reveals that the expression in \eqref{Equation: pre Functional Moment Bound 3} is at most $Ce^{O(jN^{-1/3})}$. Finally, the expectation with respect to the $\a_m$'s in \eqref{Equation: pre Functional Moment Bound 4} can be controlled via a combination of $\frac{|z|^j}{j!}\le e^{|z|}\le e^z+e^{-z}$, $z\in\R$, \cite[inequality (4.20)]{GS} and Assumption \ref{Assumption: Matrix Assumption}(a) by
\begin{equation}
\frac{C}{2^{12jn}}\exp\!\Bigg(C\sum_{a\in N^{-1/3}\N}\! \frac{12nL^a(X^{k-j;x})}{N^{1/2}}+\frac{\big(12nL^a(X^{k-j;x})\big)^2}{N^{1/3}}+\frac{\big(12nL^a(X^{k-j;x})\big)^{\gamma'}}{N^{\gamma'/6}}\Bigg),
\end{equation}
with the same $2<\gamma'<3$ as before. Putting everything together, applying H\"older's inequality, and appealing to \eqref{Equation: Uniform Moment Estimate 2.1}, \eqref{Equation: Uniform Moment Estimate 2} we arrive at
\begin{equation}\label{Equation: Functional Moment Bound 1}
\E\Big[\big|\widetilde F_j(X^{k-j;x},\a,\xi)\big|^{3n}\Big]^{1/(3n)}
\leq\frac{Ce^{-cx+O(jN^{-\eps})}}{2^j},\quad x\in\big(0,N^{2/3-\eps}\big].
\end{equation}

\smallskip

In the case $x\in(N^{2/3-\eps},N^{2/3}]$, for all $N\in\N$ large enough, $X^{k-j;x}_t\ge N^{1-\eps}/2$, $t\in[0,T_{k-j}]$ , so that  
\begin{equation}
\begin{split}
&\prod_{i=1}^{k-j-H(X^{k-j;x})}
\left|\frac{\sqrt{N-\hat X^{k-j;x}_{N^{-2/3}(i-1)}\land \hat X^{k-j;x}_{N^{-2/3}i}}+\xi_{\hat X^{k-j;x}_{N^{-2/3}(i-1)}\land \hat X^{k-j;x}_{N^{-2/3}i}}}{\sqrt{N}}\right| \\
&\leq\prod_{i=1}^{k-j}
\left(\frac{\sqrt{N-N^{1-\eps}/2}+\big|\xi_{\hat X^{k-j;x}_{N^{-2/3}(i-1)}\land \hat X^{k-j;x}_{N^{-2/3}i}}\big|}{\sqrt{N}}\right) \\
&=\bigg(1-\frac{1}{2N^\eps}\bigg)^{(k-j)/2}\cdot
\prod_{i=1}^{k-j} \left(1+\frac{\big|\xi_{\hat X^{k-j;x}_{N^{-2/3}(i-1)}\land \hat X^{k-j;x}_{N^{-2/3}i}}\big|}
{\sqrt{N-N^{1-\eps}/2}}\right) \\
&\leq e^{-kN^{-\eps}/4+O(jN^{-\eps})}\cdot\exp\bigg(2\sum_{a\in N^{-1/3}(\N-1/2)} L^a(X^{k-j;x})\frac{|\xi_{\lfloor N^{1/3}a\rfloor}|}{N^{1/6}}\bigg).
\end{split}
\end{equation}
Using H\"older's inequality and Lemma \ref{Lemma: Large Deviations} as above, but changing the $\a_m$'s and $\xi_m$'s to their absolute values and Assumption \ref{Assumption: Matrix Assumption}(a) to Assumption \ref{Assumption: Matrix Assumption}(c), we get
\begin{equation}\label{Equation: Functional Moment Bound 2}
\E\Big[\big|\widetilde F_j(X^{k-j;x},\a,\xi)\big|^{3n}\Big]^{1/(3n)}
\leq \frac{Ce^{-kN^{-\eps}/4+O(jN^{-\eps})+CN^{1/6}}}{2^j},\;\;\; x\in\big(N^{2/3-\eps},N^{2/3}\big].
\end{equation}

\smallskip

Lastly, we insert the right-hand sides of \eqref{Equation: f and Z_N}, \eqref{Equation: Lebesgue Differentiation Bound}, \eqref{Equation: Functional Moment Bound 1}, \eqref{Equation: Functional Moment Bound 2} into \eqref{Equation: Product of Integrals}: 
\begin{equation}\label{Equation: Moment Bound Final}
\begin{split}
& \frac{Ce^{O(jnN^{-\eps})}\,\Theta(3n)^{1/3}}{2^{jn}} \\
&\cdot \Bigg(\int_K^{N^{2/3-\eps}\lor K} e^{Cx^{1-\delta}-cx}\d x
+e^{-kN^{-\eps}/4+CN^{1/6}}\int_{N^{2/3-\eps}\lor K}^{N^{2/3}\lor K} e^{Cx^{1-\delta}}\d x\Bigg)^n.
\end{split}
\end{equation}
The estimate \eqref{Equation: Uniform Moment Bounds} readily follows upon recalling $k=\lfloor TN^{2/3}\rfloor$ and $\eps\in(0,\delta/3)$. 
\end{proof}

\subsection{Proof of Theorem \ref{Theorem: Main}}\label{Section: Final Proof Synthesis}

In order to establish Theorem \ref{Theorem: Main}, we first argue that Proposition \ref{Proposition: Truncated Convergence} remains true when $\ubar S=-\infty$ and $\bar S=\infty$. Indeed, the convergence in distribution of the random variable inside the expectation of \eqref{Equation: Truncated Moment Prelimit} to that inside the expectation of \eqref{Equation: Truncated Moment Limit} continues to hold. This yields the convergence \eqref{Equation: Truncated Convergence!} with $\ubar S=-\infty$ and $\bar S=\infty$ in the sense of moments, since the random variable of \eqref{Equation: Truncated Moment Prelimit} is uniformly integrable, as $N\in\N$ varies, by \eqref{Equation: Uniform Moment Bounds} with $K=0$ and $Z_N=1$. The same argument gives also the joint convergence in the sense of moments. The convergence \eqref{Equation: Truncated Convergence!} with $\ubar S=-\infty$ and $\bar S=\infty$ in distribution results from an identification of the limit points in distribution with the limit in the sense of moments via the stochastic domination argument in \cite[proof of Lemma 4.15]{GS}. Such identification in the case of the joint convergence in distribution is most easily seen after an application of the Skorokhod representation theorem, leading to two random vectors with componentwise inequalities between them and same joint moments. 

\medskip

Next, we prove Theorem \ref{Theorem: Main} under the additional assumption that the $g$'s therein are continuous. Let 
\begin{equation}
\Delta^j_k(f,g):=\Sc^j_k(f,g)-\widetilde\Sc^j_k(f,g;1),\quad N\in\N,\;\;j=0,\,1,\,\ldots.
\end{equation}
In view of \eqref{Equation: Weak Convergence Prelimit}, for all $K\in\N$, one has
\begin{equation}\label{Equation: Weak Convergence Prelimit Final}
\big(\pi_Nf\big)^\top\M^{\be;w}_{T;N}\big(\pi_Ng\big)=\sum_{j=0}^k\widetilde\Sc^j_k(f_K,g_K;1)+\sum_{j=0}^k\bar\Sc^j_k(f,g,K;1) 
+\sum_{j=0}^k \Delta^j_k(f,g).
\end{equation}

\smallskip

We aim to take the $N\to\infty$ limit of the right-hand side in \eqref{Equation: Weak Convergence Prelimit Final} and start with the asymptotics of the first sum therein. For every finite set of summands $\widetilde\Sc^{1}_k(f_K,g_K;1)$, $\widetilde\Sc^{2}_k(f_K,g_K;1),\;\ldots,\;\widetilde\Sc^{J}_k(f_K,g_K;1)$, their joint limit in distribution and in the sense of moments is determined by the right-hand side of \eqref{Equation: Truncated Convergence!} with $\ubar S=-\infty$ and $\bar S=\infty$. This and the moment bounds of \eqref{Equation: Uniform Moment Bounds} imply that the first sum on the right-hand side of \eqref{Equation: Weak Convergence Prelimit Final} converges to $\int_0^\infty f_K(x)\,\big(\U^{\be;w}_Tg_K\big)(x)\d x$ in distribution and in the sense of moments. Since the moment bounds of \eqref{Equation: Uniform Moment Bounds} for $\bar\Sc^j_k(f_K,g_K,0;1)$ are inherited by their $N\to\infty$ limits, we have, in addition,
\begin{equation} 
\begin{split} 
\lim_{K\to\infty}\,\lim_{N\to\infty}\,
\sum_{j=0}^k \widetilde\Sc^j_k(f_K,g_K;1)
=\lim_{K\to\infty}\,\int_0^\infty f_K(x)\,\big(\U^{\be;w}_Tg_K\big)(x)\d x \\
=\int_0^\infty f(x)\,\big(\U^{\be;w}_Tg\big)(x)\d x
\end{split}
\end{equation}
in distribution and in the sense of moments. 

\medskip

Moreover, the moment bounds of \eqref{Equation: Uniform Moment Bounds} reveal that 
\begin{equation}
\lim_{K\to\infty}\,\lim_{N\to\infty}\,\sum_{j=0}^k\bar\Sc^j_k(f,g,K;1)=0
\end{equation}
in $L^n$, for all $n\in\N$. To analyze the third sum on the right-hand side of \eqref{Equation: Weak Convergence Prelimit Final} we introduce,  for all $j=1,\,2,\,\ldots$, the notations
\begin{equation}
h_j(N)=\sum_{0\leq i_1\leq\cdots\leq i_j\leq k}\,\prod_{j'=1}^j\a_{\hat X^{k-j;x}_{N^{-2/3}i_{j'}}},\quad
p_j(N)=\sum_{i=0}^{k-j-H(X^{k-j;x})} \big(\a_{\hat X^{k-j;x}_{N^{-2/3}i}}\big)^j,
\end{equation}
and $[z^j]P(z)$ for the coefficient of $z^j$ in a power series $P(z)$. Then, the Newton identities relating the complete homogeneous symmetric functions to the power sums (see e.g. \cite[Chapter 1, Section 2]{M}) yield 
\begin{equation}
\frac{h_j(N)}{(2\sqrt{N})^j}=\frac{p_1(N)^j}{j!\,(2\sqrt{N})^j}
+\sum_{\iota=0}^{j-1} \frac{p_1(N)^\iota}{\iota!\,(2\sqrt{N})^\iota}\Bigg([z^{j-\iota}]\exp\Bigg(\sum_{j'=2}^\infty \frac{p_{j'}(N)}{j'(2\sqrt{N})^{j'}} z^{j'}\Bigg)\Bigg).
\end{equation}
Therefore, with
\begin{equation}
Z_N^{j,\iota}:=[z^{j-\iota}]\exp\Bigg(\sum_{j'=2}^\infty \frac{p_{j'}(N)}{j'(2\sqrt{N})^{j'}} z^{j'}\Bigg),\quad \iota=0,\,1,\,\ldots,\,j-1,
\end{equation}
it holds 
\begin{equation}
\sum_{j=0}^k \Delta^j_k(f,g)=\sum_{j=0}^k \,\sum_{\iota=0}^{j-1} 
\bar\Sc^\iota_k(f,g;Z_N^{j,\iota}).
\end{equation}
By \cite[Lemma 4.20]{GS}, one can find bounds $\E\big[|Z_N^{j,\iota}|^n\big]\le\Theta(3n,j-\iota,N)$ such that 
\begin{equation}\label{eq:property constants}
\lim_{N\to\infty}\,\sum_{j=0}^k\,\sum_{\iota=0}^{j-1}\frac{\Theta(3n,j-\iota,N)^{1/(3n)}}{(3/2)^j}=0.
\end{equation}
A combination of the triangle inequality for the $L^n$ norm, the moment bounds of \eqref{Equation: Uniform Moment Bounds} and the property \eqref{eq:property constants} gives
\begin{equation}
\lim_{N\to\infty}\,\sum_{j=0}^k \Delta^j_k(f,g)=0
\end{equation}
in $L^n$, for all $n\in\N$. This finishes the proof of Theorem \ref{Theorem: Main} under the continuity assumption on the $g$'s.

\medskip

For general $f,g\in{\mathcal D}$, the same arguments as above reveal that it suffices to identify, for all $K\in\N$, the limit of $\sum_{j=0}^k\widetilde\Sc^j_k(f_K,g_K;1)$ in distribution and in the sense of moments as $\int_0^\infty f_K(x)\,\big(\U^{\be;w}_Tg_K\big)(x)\d x$. In fact, it is enough to establish 
\begin{equation}\label{eq:approx_arg1}
\lim_{N\to\infty}\,\sum_{j=0}^k\widetilde\Sc^j_k(f_K,g_K;1)=\int_0^\infty f_K(x)\,\big(\U^{\be;w}_Tg_K\big)(x)\d x
\end{equation}
in distribution, since the moment bounds of \eqref{Equation: Uniform Moment Bounds} for $\bar\Sc^j_k(f_K,g_K,0;1)$ then imply the convergence of moments. To see \eqref{eq:approx_arg1} we pick $g_{\eta,K}$, $\eta\in\N$ in $C([0,\infty))$ so that
\begin{equation}
\upsilon_\eta:=\|g_{\eta,K}h_K-g_K\|_{L^2([0,\infty))}\underset{\eta\to\infty}{\longrightarrow}0. 
\end{equation}
Recalling the symmetry of $\widetilde\Sc^j_k(\,\cdot\,,\,\cdot\,;1)$ (cf. \eqref{eq:symmetry}), applying the Cauchy-Schwarz inequality, and repeating the proof of Proposition \ref{Proposition: Uniform Moment Bounds} mutatis mutandis we get 
\begin{equation}
\begin{split}
& \E\Big[\big|\widetilde\Sc^j_k(f_K,g_K;1)-\widetilde\Sc^j_k(f_K,g_{\eta,K}h_K;1)\big|^2\Big] \\
& \le \upsilon_\eta^2\;\E\bigg[\int_0^{N^{2/3}} \E_{X^{k-j;x}}\bigg[\widetilde F_j(X^{k-j;x},\a,\xi)\cdot N^{1/3}\int_{N^{-1/3}X^{k-j;x}_{T_{k-j}}}^{N^{-1/3}(X^{k-j;x}_{T_{k-j}}+1)}f_K(y)\d y\bigg]^2\d x\bigg] \\
&\le \upsilon_\eta^2\;\frac{Ce^{O(jN^{-\eps})}}{2^{2j}}.
\end{split}
\end{equation}
To complete the proof of Theorem \ref{Theorem: Main} we observe that
\begin{equation}
\forall\,\eta\in\N:\;\lim_{N\to\infty}\,\sum_{j=0}^k\widetilde\Sc^j_k(f_K,g_{\eta,K}h_K;1)=\int_0^\infty f_K(x)\,\big(\U^{\be;w}_T(g_{\eta,K}h_K)\big)(x)\d x
\end{equation}
in distribution and that
\begin{equation}
\lim_{\eta\to\infty} \int_0^\infty f_K(x)\,\big(\U^{\be;w}_T(g_{\eta,K}h_K)\big)(x)\d x = \int_0^\infty f_K(x)\,\big(\U^{\be;w}_Tg_K\big)(x)\d x
\end{equation} 
almost surely, thanks to Proposition \ref{Proposition: Properties}(c). 

\section{Properties of the limiting operators} \label{sec:properties}

The goal of this section is to prove Proposition \ref{Proposition: Properties}. We start by preparing some auxiliary constructions and results. From this point on, the entries of $M^{\be;w}_N$ are specified as in Remark \ref{rmk:special entries}. We also let  
\begin{equation}
A^{\be;w}_N:=N^{1/6}\big(2\sqrt{N}-M^{\be;w}_N\big),
\end{equation}
viewed as an operator acting on $L^2([0,\infty))$ via Remark \ref{rmk:matrix as operator}. The next proposition is a direct corollary of \cite[Proposition 2.8, Remark 2.9, Lemma 2.7, Theorem 2.10 and its proof]{BV1} (note that \cite[Assumptions 1-3]{BV1} for our particular model can be verified as in \cite[Section 3, last paragraph]{BV1}).

\begin{proposition} \label{Theorem: Detailed Spiked SAO is Edge Limit}
For all $\be>0$ and $w\in\R$, the operator $\H^{\be;w}$ of \eqref{eq:SSAE} almost surely possesses a purely discrete spectrum $\Lambda_1<\Lambda_2<\cdots$ satisfying $\La_q\to\infty$ as $q\to\infty$. The corresponding eigenspaces are one-dimensional, and each is therefore spanned by a normalized eigenfunction $f_q$. Moreover, one can couple $\H^{\be;w}$ with a subsequence of $A^{\be;w}_N$, $N\in\N$ along which, almost surely,
\begin{equation}
\lim_{N\to\infty} \La_{q,N}=\La_{q}\;\;\text{and}\;\;
\lim_{N\to\infty} \|v_{q,N}-f_{q}\|_{L^2([0,\infty))}=0,\quad q=1,\,2,\,\ldots,
\end{equation}
where $\La_{1,N}<\La_{2,N}<\cdots<\La_{N,N}$ and $v_{1,N},\,v_{2,N},\,\ldots,\,v_{N,N}$ are eigenvalues and corresponding eigenfunctions of $A^{\be;w}_N$. Along the same subsequence, the standard Brownian motion $W$ in the definition of $\H^{\be;w}$ arises in the almost sure limit \eqref{Equation: Origin of Noise}.
\end{proposition}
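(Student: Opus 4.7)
My plan is to present the proof essentially as an invocation and careful bookkeeping of results from Bloemendal--Vir\'ag \cite{BV1}, since the authors explicitly describe the proposition as a direct corollary. The argument has three conceptual pieces: spectral properties of $\H^{\be;w}$, convergence of the tridiagonal eigenstructure along a subsequence, and a compatibility check ensuring that the Brownian motion driving $\H^{\be;w}$ coincides with the Donsker-type limit in \eqref{Equation: Origin of Noise}.

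First, I would verify that the hypotheses \cite[Assumptions 1--3]{BV1} are met. With the entries of $M^{\be;w}_N$ specialized as in Remark \ref{rmk:special entries}, this reduces exactly to the check performed in the last paragraph of \cite[Section 3]{BV1}: the diagonal entries are centered Gaussian with variance $2/\beta$, the off-diagonals are scaled chi-distributed variables, and the spike $\sqrt{N}\ell_N$ satisfies \eqref{Equation: Critical Phase Scaling}. With these inputs, \cite[Proposition 2.8]{BV1} together with \cite[Remark 2.9]{BV1} yields that $\H^{\be;w}$ is almost surely self-adjoint with purely discrete spectrum, simple eigenvalues, and eigenvalues accumulating only at $+\infty$. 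This gives the spectral statements of the proposition and provides the normalized eigenfunctions $f_q$.

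Second, I would apply \cite[Theorem 2.10]{BV1} (built on \cite[Lemma 2.7]{BV1}) to obtain joint convergence in distribution of $(\Lambda_{1,N},\ldots,\Lambda_{q,N}, v_{1,N},\ldots,v_{q,N})$ to $(\Lambda_1,\ldots,\Lambda_q,f_1,\ldots,f_q)$ for each $q$; a diagonal extraction and the Skorokhod representation theorem then produce a subsequence along which the convergence is almost sure in the appropriate topology (real convergence for eigenvalues and $L^2([0,\infty))$ convergence for eigenfunctions, after fixing signs so that the normalized $v_{q,N}$ and $f_q$ have matching orientations).

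Finally, for the joint convergence with \eqref{Equation: Origin of Noise}: by Remark \ref{rmk:special entries}, the sequences $(\a_m)$ and $(\xi_m)$ are independent Gaussian and rescaled chi variables, and a standard invariance principle (as used in Lemma \ref{Lemma: Basic Convergence}) shows that the partial sums $\sqrt{\be}\,N^{-1/6}\sum_{m\le N^{1/3}x}(\a_m/2+\xi_m)$ converge in Skorokhod topology to a standard Brownian motion. In the BV1 construction, the noise driving the continuum operator $\H^{\be;w}$ is identified as precisely this limit; adding the convergence \eqref{Equation: Origin of Noise} to the joint convergence in distribution of eigenvalues and eigenfunctions, then invoking Skorokhod representation on the extended tuple, yields the common subsequence on which all convergences hold almost surely with $W$ being \emph{the same} Brownian motion that defines $\H^{\be;w}$. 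The main obstacle is precisely this identification: making sure that the Brownian motion extracted via Skorokhod's theorem is not merely equal in distribution to but actually agrees with the driving noise of the limiting operator; this is handled by tracking $W$ through the BV1 transfer matrix/shooting construction, which exhibits $\H^{\be;w}$ as a measurable functional of the limiting white noise.
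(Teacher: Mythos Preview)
Your proposal is correct and matches the paper's approach exactly: the paper offers no proof beyond the sentence ``direct corollary of \cite[Proposition 2.8, Remark 2.9, Lemma 2.7, Theorem 2.10 and its proof]{BV1}'' together with the remark that \cite[Assumptions 1--3]{BV1} are verified as in \cite[Section 3, last paragraph]{BV1}. Your write-up is a faithful unpacking of precisely these citations, including the Skorokhod-representation step implicit in passing from convergence in distribution to almost sure convergence along a subsequence.
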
 

Next, we present an alternative formula for the kernel $K^{\be;w}_T$ of \eqref{Equation: Spiked Kernel}, to be used in the proof of Proposition \ref{Proposition: Properties}.

\begin{lemma}\label{Lemma: Kernel Equivalences}
For all $\be>0$, $w\in\R$ and $T>0$, define the kernels
\begin{equation}
\begin{split}
& K^{\be}_T(x,y)=
\frac{\exp\left(-\frac{(x-y)^2}{2T}\right)}{\sqrt{2\pi T}} \\
& \quad\;\;\; \cdot\E_{\widetilde{W}^x}\bigg[\1_{\{\min_{0\le t\le T} \widetilde{W}^x_t>0\}}
\exp\bigg(-\int_0^T\frac{\widetilde{W}^x_t}2\d t+\int_0^\infty\frac{L_T^a(\widetilde{W}^x)}{\sqrt{\be}}\d W_a\bigg)\bigg|\widetilde{W}^x_T=y\bigg]
\end{split}
\end{equation}
and
\begin{equation}
\begin{split}
& \bar K^{\be;w}_T(x,y)=
\frac{2\exp\left(-\frac{(x-y)^2}{2T}\right)}{\sqrt{2\pi T}}\;\E_{\widetilde{W}^x}\bigg[\1_{\{\min_{0\le t\le T} \widetilde{W}^x_t\le 0\}} \\
& \qquad\qquad\;\cdot\exp\bigg(-\int_0^T\frac{|\widetilde{W}^x_t|}2\d t+\int_0^\infty\frac{L_T^a(|\widetilde{W}^x|)}{\sqrt{\be}}\d W_a-w\frac{L_T^0(|\widetilde{W}^x|)}2\bigg)\bigg|\widetilde{W}^x_T=y\bigg],
\end{split}
\end{equation} 
where $\widetilde{W}^x$ is a Brownian motion started at $x$, independent of $W$. Then,
\begin{equation}\label{eq:kernel decomposition}
K^{\be;w}_T(x,y)=K^{\be}_T(x,y)+\bar K^{\be;w}_T(x,y),\quad x,y\ge0.
\end{equation}
\end{lemma}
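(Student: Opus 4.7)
The plan is to realize the reflected Brownian motion $R^x$ as $R^x=|\widetilde W^x|$ for a Brownian motion $\widetilde W^x$ started at $x$ (independent of the noise $W$ from the outer layer), and then partition sample paths according to whether $\widetilde W^x$ touches the origin on $[0,T]$. On the event $\{\min_{0\le t\le T}\widetilde W^x_t>0\}$, the reflected motion agrees with $\widetilde W^x$ and the local time $L^0_T(R^x)$ vanishes, so the $w$-boundary term drops out and one recovers $K^\be_T$. On the complementary event, one invokes the reflection principle to reconstruct the full reflected-motion contribution from the sign-symmetric piece, producing the factor $2$ in $\bar K^{\be;w}_T$.

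Concretely, testing against an arbitrary bounded measurable $\phi:[0,\infty)\to\R$, I would compute
\begin{equation*}
\int_0^\infty K^{\be;w}_T(x,y)\,\phi(y)\d y=\E\big[F\big(|\widetilde W^x|\big)\phi\big(|\widetilde W^x_T|\big)\big],
\end{equation*}
where $F(R^x)$ stands for the exponential functional appearing inside \eqref{Equation: Spiked Kernel}. Splitting by $\{\min\widetilde W^x>0\}$ versus $\{\min\widetilde W^x\le0\}$ and using $|\widetilde W^x|=\widetilde W^x$, $L^0_T(|\widetilde W^x|)=0$ on the first event, the first piece evaluates, after disintegrating on $\widetilde W^x_T=y$, to $\int_0^\infty K^\be_T(x,y)\phi(y)\d y$. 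The second piece I would disintegrate by integrating over $\widetilde W^x_T=z\in\R$ and substituting $y=|z|$, obtaining
\begin{equation*}
\int_0^\infty\!\phi(y)\bigg[\frac{e^{-(x-y)^2/(2T)}}{\sqrt{2\pi T}}\E\!\left[\1_{\min\le0}F(|\widetilde W^x|)\big|\widetilde W^x_T=y\right]+\frac{e^{-(x+y)^2/(2T)}}{\sqrt{2\pi T}}\E\!\left[F(|\widetilde W^x|)\big|\widetilde W^x_T=-y\right]\bigg]\d y.
\end{equation*}

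The key observation—and the step I expect to be the main obstacle in writing cleanly—is the reflection identity
\begin{equation*}
\E\big[F(|\widetilde W^x|)\,\big|\,\widetilde W^x_T=-y\big]=\E\big[F(|\widetilde W^x|)\,\big|\,\widetilde W^x_T=y,\,\min\le0\big].
\end{equation*}
This follows from the path transformation that reflects $\widetilde W^x$ after its first hitting time $\tau_0$ of zero: the map is a measure-preserving bijection from bridges $x\leadsto-y$ to bridges $x\leadsto y$ that visit zero, and the functional $F$ depends only on $|\widetilde W^x|$, which is pointwise invariant under the reflection after $\tau_0$. Combining this with the identity $e^{-(x+y)^2/(2T)}=e^{-(x-y)^2/(2T)}e^{-2xy/T}$ and $\Pr[\min\le0\,|\,\widetilde W^x_T=y]=e^{-2xy/T}$ (the standard bridge hitting probability) shows that the two terms in the bracket above are equal, yielding the factor $2$ and producing exactly $\bar K^{\be;w}_T(x,y)$.

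Finally, since $\phi$ was arbitrary and both sides of \eqref{eq:kernel decomposition} are (almost surely) locally integrable in $y$, the identity of the kernels follows. I would close by remarking that the boundary case $y=0$ is covered automatically since on $\{\widetilde W^x_T=0\}$ the event $\{\min\le0\}$ has full probability when $x\ge0$, so $K^\be_T(x,0)=0$ and $\bar K^{\be;w}_T(x,0)=K^{\be;w}_T(x,0)$ consistently.
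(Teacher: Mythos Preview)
Your proposal is correct and follows essentially the same route as the paper: realize $R^x=|\widetilde W^x|$, split according to $\{\min_{[0,T]}\widetilde W^x>0\}$ versus its complement, and use the reflection after $\tau_0$ together with the bridge hitting probability $\Pr[\min\le0\mid\widetilde W^x_T=y]=e^{-2xy/T}$ to merge the two pieces into the factor $2$ of $\bar K^{\be;w}_T$. The only cosmetic difference is that the paper starts directly from the two-term kernel formula \eqref{Equation: Spiked Kernel} rather than integrating against a test function, so you can streamline by working with the kernel pointwise instead of via $\phi$.
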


\begin{proof}
With $R^x:=|\widetilde{W}^x|$, the formula for $K^{\be;w}_T$ in  \eqref{Equation: Spiked Kernel} can be rewritten as  
\begin{equation}\label{eq:two expectations}
\begin{split}
& \frac{\exp\left(-\frac{(x-y)^2}{2T}\right)}{\sqrt{2\pi T}} \\
& \;\;\;\cdot\E_{\widetilde{W}^x}\bigg[\exp\bigg(-\int_0^T\frac{|\widetilde{W}^x_t|}2\d t+\int_0^\infty\frac{L_T^a(|\widetilde{W}^x|)}{\sqrt{\be}}\d W_a-w\frac{L_T^0(|\widetilde{W}^x|)}2\bigg)\bigg|\widetilde{W}^x_T=y\bigg] \\
& + \frac{\exp\left(-\frac{(x+y)^2}{2T}\right)}{\sqrt{2\pi T}} \\
& \;\;\;\cdot\E_{\widetilde{W}^x}\bigg[\exp\bigg(-\int_0^T\frac{|\widetilde{W}^x_t|}2\d t+\int_0^\infty\frac{L_T^a(|\widetilde{W}^x|)}{\sqrt{\be}}\d W_a-w\frac{L_T^0(|\widetilde{W}^x|)}2\bigg)\bigg|\widetilde{W}^x_T=-y\bigg].
\end{split}
\end{equation}
Next, we decompose the first expectation in \eqref{eq:two expectations} according to the events 
\begin{equation}
\Big\{\min_{0\le t\le T} \widetilde{W}^x_t>0\Big\}\quad\text{and}\quad
\Big\{\min_{0\le t\le T} \widetilde{W}^x_t\le 0\Big\},
\end{equation}
and note that, on the former event, $|\widetilde{W}^x|=\widetilde{W}^x$ and $L^0_T(|\widetilde{W}^x|)=0$.  

\medskip

In addition, by the strong Markov property and the symmetry about $0$ of Brownian motion, instead of conditioning on $\widetilde{W}^x_T=-y$ in the second expectation of \eqref{eq:two expectations}, we can condition on $\min_{0\le t\le T} \widetilde{W}^x_t\le 0$, $\widetilde W^x_T=y$. This operation, in turn, is equivalent to inserting $\mathbf{1}_{\{\min_{0\le t\le T} \widetilde{W}^x_t\le 0\}}$ into the expectation, conditioning on $\widetilde W^x_T=y$ and normalizing the result by $\Pr\big[\min_{0\le t\le T} \widetilde{W}^x_t\le 0\big|\widetilde W^x_T=y\big]$. Computing  
\begin{equation}\label{eq:BB reaches 0}
\Pr\Big[\min_{0\le t\le T} \widetilde{W}^x_t\le 0\Big|\widetilde W^x_T=y\Big]=e^{-2xy/T} 
\end{equation}
from the joint density of the running minimum and the current value of a Brownian motion (see e.g. \cite[Chapter III, Exercise 3.14]{RY}) and observing
\begin{equation}
e^{2xy/T}\cdot\frac{\exp\left(-\frac{(x+y)^2}{2T}\right)}{\sqrt{2\pi T}}
=\frac{\exp\left(-\frac{(x-y)^2}{2T}\right)}{\sqrt{2\pi T}}
\end{equation}
we arrive at the right-hand side of \eqref{eq:kernel decomposition}. 
\end{proof}

We are now ready to prove Proposition \ref{Proposition: Properties}.

\medskip

\noindent\textbf{Proof of Proposition \ref{Proposition: Properties}. (a).} The identity \eqref{eq:relation to SAO} follows from Theorem \ref{Theorem: Main} by the same arguments as were used to obtain \cite[Corollary 2.12]{GS} from Theorem 2.8 therein. One only needs to replace every reference to the main result of \cite{RRV} by a reference to Proposition \ref{Theorem: Detailed Spiked SAO is Edge Limit}, the pointer to \cite[Lemma 6.1]{GS} by a pointer to \eqref{eq:N2/3heuristic}, and the assertion that the eigenvalues of $-\frac 12\H^\be$ tend to $-\infty$ by the same statement for $-\frac 12\H^{\be;w}$ (cf. Proposition \ref{Theorem: Detailed Spiked SAO is Edge Limit}).

\medskip

\noindent\textbf{(b), (c).} We proceed to the almost sure Hilbert-Schmidt property of $\U^{\be;w}_T$, for each $T>0$. In view of Lemma \ref{Lemma: Kernel Equivalences},
it is enough to show that
\begin{equation}
\E\bigg[\int_0^\infty\int_0^\infty \big(K^{\be}_T(x,y)+\bar K^{\be;w}_T(x,y)\big)^2\d x \d y\bigg]<\infty.
\end{equation}
Since 
\begin{equation}
\E\bigg[\int_0^\infty\int_0^\infty K^{\be}_T(x,y)^2\d x \d y\bigg]<\infty
\end{equation}
is established in \cite[proof of Lemma 5.1]{GS}, it suffices to check 
\begin{equation}\label{Equation: Hilbert-Schmidt Expectation}
\int_0^\infty\int_0^\infty \E\big[\bar K^{\be;w}_T(x,y)^2\big]\d x\d y<\infty.
\end{equation}

\smallskip

Next, we estimate $\E\big[\bar K^{\be;w}_T(x,y)^2\big]$ by moving the square function into the expectation $\E_{\widetilde{W}^x}[\,\cdot\,|\widetilde{W}^x_T=y]$, dropping the indicator random variable, employing
\begin{equation}\label{eq:loc time abs}
L^a_T(|\widetilde{W}^x|)^2=\big(L^a_T(\widetilde{W}^x)
+L^{-a}_T(\widetilde{W}^x)\big)^2
\le 2L^a_T(\widetilde{W}^x)^2+2L^{-a}_T(\widetilde{W}^x)^2,\quad a\ge 0,
\end{equation}
and evaluating the expectation with respect to $W$:  
\begin{equation}
\begin{split}
& \E\left[\bar K^{\be;w}_T(x,y)^2\right]
\leq \frac{2\exp\left(-\frac{(x-y)^2}{T}\right)}{\pi T} \\
& \qquad\quad\;\;\cdot\E\bigg[\exp\bigg(-\int_0^T |\widetilde{W}^x_t|\d t+\int_{-\infty}^\infty\frac{4L_T^a(\widetilde{W}^x)^2}{\be}\d a-2wL_T^0(\widetilde{W}^x)\bigg)\bigg|\widetilde{W}^x_T=y\bigg].
\end{split}
\end{equation}
According to H\"older's inequality, the latter expectation is at most
\begin{equation}\label{eq:Hol factors}
\begin{split}
\E\bigg[\exp\bigg(\!\!-\int_0^T\!\! 3|\widetilde{W}^x_t|\d t\bigg)\bigg|\widetilde{W}^x_T=y\bigg]^{1/3}
\E\bigg[\exp\bigg(\int_{-\infty}^\infty\frac{12L_T^a(\widetilde{W}^x)^2}{\be}\d a\bigg)\bigg|\widetilde{W}^x_T=y\bigg]^{1/3} \\
\cdot\E\Big[e^{-6wL_T^0(\widetilde{W}^x)}\Big|\widetilde{W}^x_T=y\Big]^{1/3}.
\end{split}
\end{equation}

\smallskip

Thanks to $|\widetilde{W}^x_t|\ge \widetilde{W}^x_t$, the identity in distribution
\begin{equation}
\Big(\widetilde{W}^x_t:\,t\in[0,T]\,\Big|\,\widetilde{W}^x_T=y\Big)
\deq\Big(\widetilde{W}^0_t+\big(1-\tfrac tT\big)x+\tfrac tTy:\,t\in[0,T]\,\Big|\,\widetilde{W}^0_T=0\Big),
\end{equation}
and $\big(1-\frac tT\big)x+\frac tTy\geq x\land y$, the first factor in \eqref{eq:Hol factors} is bounded above by
\begin{equation}
e^{-T(x\land y)}\,\E\bigg[\exp\bigg(-\int_0^T 3\widetilde{W}^0_t \d t\bigg)\bigg|\widetilde{W}^0_T=0\bigg]^{1/3}.
\end{equation}
In addition, the coupling of \cite[Proposition 4.1]{GS} reveals the random variable $\int_{-\infty}^\infty L^a_T(\widetilde{W}^x)^2\d a$, conditioned on $\widetilde{W}^x_T=y$, as the almost sure $N\to\infty$ limit of the left-hand side in \cite[inequality (4.15)]{GS}. Thus, the second expectation in \eqref{eq:Hol factors} can be controlled by the limit inferior of the corresponding exponential moment of the right-hand side in \cite[inequality (4.15)]{GS}. Proceeding as therein we arrive at
\begin{equation}
\E\bigg[\exp\bigg(\int_{-\infty}^\infty\frac{12L_T^a(\widetilde{W}^x)^2}{\be}\d a\bigg)\bigg|\widetilde{W}^x_T=y\bigg]^{1/3}
\leq Ce^{C|x-y|},
\end{equation}
with a constant $C=C(\be,T)<\infty$. Also, we see from \cite[equation (3)]{Pitman} that the density of the local time at $0$ of a Brownian bridge from $x'$ to $y'$ on $[0,1]$ is
\begin{equation}\label{eq: loc time dens}
(z+x'+y')\exp\bigg(\frac12\Big((x'-y')^2-(z+x'+y')^2\Big)\bigg),\quad z>0
\end{equation}
and from \eqref{eq:BB reaches 0} that this local time vanishes with probability $1-e^{-2x'y'}$. Hence,
\begin{equation}
\begin{split}
& \E\big[\exp\big(\theta L^0_1(\widetilde{W}^{x'})\big)
\big|\widetilde{W}^{x'}_1=y'\big] \\
& =1+\sqrt{\frac{\pi}{2}}\theta e^{-2x'y'}\exp\bigg(\frac{(x'+y'-\theta)^2}{2}\bigg)\bigg(2-\erfc\bigg(\frac{-x'-y'+\theta}{\sqrt{2}}\bigg)\bigg)\le C(\theta)<\infty
\end{split}
\end{equation}
due to standard estimates for the complementary error function. All in all, it follows that the left-hand side in \eqref{Equation: Hilbert-Schmidt Expectation} is less or equal to 
\begin{equation}
\int_0^\infty\int_0^\infty C\exp\bigg(-\frac{(x-y)^2}{C}-\frac{x\land y}{C}+C|x-y|\bigg)\d x \d y<\infty,
\end{equation}
where $C=C(\beta,w,T)$ is a finite positive constant. 

\medskip

We turn to the proof of the semigroup property in Proposition \ref{Proposition: Properties}(b) and assume without loss of generality $T_1,T_2>0$. By the just established Hilbert-Schmidt property, it suffices to verify that, almost surely, 
\begin{equation}
\U^{\be;w}_{T_1}\U^{\be;w}_{T_2}f=\U^{\be;w}_{T_1+T_2}f
\end{equation}
on a countable dense set of functions $f\in L^2([0,\infty))$. Fixing such a function $f$, writing it as the difference of its positive and negative parts, and applying Fubini's theorem we reduce the statement of Proposition \ref{Proposition: Properties}(b) further to
\begin{equation}
\int_0^\infty K^{\be;w}_{T_1}(x,z) K^{\be;w}_{T_2}(z,y)\d z
=K^{\be;w}_{T_1+T_2}(x,y),\quad x,y\geq0.
\end{equation}

\smallskip

Let us introduce the transition kernels
\begin{equation}
\gamma_T(x,y):=\frac{\exp\left(-\frac{(x-y)^2}{2T}\right)+\exp\left(-\frac{(x+y)^2}{2T}\right)}{\sqrt{2\pi T}},\;\;x,y\geq0,\quad T>0
\end{equation}
and the additive functionals
\begin{equation}
F_T(R^x):=-\int_0^T\frac{R^x_t}2\d t+\int_0^\infty\frac{L_T^a(R^x)}{\sqrt{\be}}\d W_a-w\frac{L_T^0(R^x)}2,\quad T>0.
\end{equation}
Then,
\begin{equation}\label{Equation: Semigroup Integration}
\begin{split}
& \int_0^\infty K^{\be;w}_{T_1}(x,z) K^{\be;w}_{T_2}(z,y)\d z =\gamma_{T_1+T_2}(x,y) \\
& \qquad\quad\;\; \cdot\int_0^{\infty} \frac{\gamma_{T_1}(x,z)\gamma_{T_2}(z,y)}{\gamma_{T_1+T_2}(x,y)}
\E_{R^x}\Big[e^{F_{T_1}(R^x)}\Big|R^x_{T_1}=z\Big]
\E_{R^z}\Big[e^{F_{T_2}(R^z)}\Big|R^z_{T_2}=y\Big]\d z.
\end{split}
\end{equation}
To identify the right-hand side of \eqref{Equation: Semigroup Integration} with $K^{\be;w}_{T_1+T_2}(x,y)$ it remains to notice that the process $(R^x_t:\,t\in[0,T_1+T_2]\,|\,R^x_{T_1+T_2}=y)$ therein can be sampled by
\begin{enumerate}[(a)]
\item picking a random point $Z$ according to the density 
$\frac{\gamma_{T_1}(x,z)\gamma_{T_2}(z,y)}{\gamma_{T_1+T_2}(x,y)}$, $z>0$,
\item conditional $Z=z$, sampling processes $R^{(1)}$, $R^{(2)}$ independently such that
\begin{eqnarray}
&& (R^{(1)}_t:\,t\in[0,T_1])\deq(R^x_t:\,t\in[0,T_1]\,|\,R^x_{T_1}=z),\\
&& (R^{(2)}_t:\,t\in[0,T_2])\deq(R^z_t:\,t\in[0,T_2]\,|\,R^z_{T_1}=y),
\end{eqnarray}
\item concatenating the paths of $R^{(1)}$ and $R^{(2)}$.  
\end{enumerate}

\smallskip

As with the semigroup property, for each $T>0$, the symmetry property of the operator $\U_T$ can be reduced to an assertion about its kernel:
\begin{equation}\label{eq:symm kernel}
K^{\be;w}_T(x,y)=K^{\be;w}_T(y,x),\quad x,y\geq0.
\end{equation}
Since the transition kernels $\gamma_{T'}$, $T'>0$ of the reflected Brownian motion $R$ are symmetric, we have 
\begin{equation}
\big(R^x_t:\,t\in[0,T]\,\big|\,R^x_T=y\big)
\deq\big(R^y_{T-t}:\,t\in[0,T]\,\big|\,R^y_T=x\big),
\end{equation}
and therefore \eqref{eq:symm kernel}. Finally, the non-negativity of $\U^{\be;w}_T$ follows by extending 
\begin{equation}
\int_0^\infty \!\! (\U^{\be;w}_Tf)(x)\,f(x)\d x
\!=\!\int_0^\infty \!\! \big(\U^{\be;w}_{T/2}(\U^{\be;w}_{T/2}f)\big)(x)\,f(x) \d x
\!=\!\int_0^\infty \!\! \big(\!(\U^{w;\be}_{T/2}f)(x)\big)^2 \d x
\ge 0,
\end{equation}
for a fixed $f\in L^2([0,\infty))$, to the same almost sure property for all $f\in L^2([0,\infty))$ simultaneously, by means of the almost sure Hilbert-Schmidt property of $\U^{\be;w}_T$.

\medskip

\noindent\textbf{(d).} To obtain the almost sure trace class property of $\U^{\be;w}_T$ and the trace formula \eqref{Equation: Spiked Trace Formula} we combine the spectral theorem for symmetric compact operators with the definition of the trace to find
\begin{equation}
\Tr\big(\U^{\be;w}_T\big)=\sum_{q=1}^\infty e^{-T\Lambda_q/2}. 
\end{equation}
The latter sum is the square of the Hilbert-Schmidt norm of the symmetric Hilbert-Schmidt operator $\U^{\be;w}_{T/2}$ (see e.g. \cite[Section 28, Exercise 11]{Lax}) and, thus, equals to
\begin{equation}
\int_0^\infty \int_0^\infty K^{\beta;w}_{T/2}(x,y)\,K^{\beta;w}_{T/2}(y,x) \d y \d x
=\int_0^\infty K^{\beta;w}_T(x,x) \d x.
\end{equation}

\smallskip

\noindent\textbf{(e).} For the $L^2$-strong continuity in expectation of \eqref{Equation: L2 strong}, without loss of generality we fix an integer $p\ge 2$, an $f$ with $\|f\|_{L^2([0,\infty))}=1$, and a sequence $(t_\eta)_{\eta\in\N}$ in $[0,T+1]$ converging to $T$ such that $t_\eta>0$ for at least one $\eta$. Then, by applying \cite[Section 28, Theorem 7]{Lax} to the commuting symmetric operators $\U^{\be;w}_{t_\eta}$, $\eta\in\N$ and $\U^{\be;w}_T$, with at least one $\U^{\be;w}_{t_\eta}$ being compact, we can write $f$ as $\sum_{q=1}^\infty c_q f_q$, where $f_q$, $q\in\N$ form an orthonormal basis of common eigenfunctions for $\U^{\be;w}_{t_\eta}$, $\eta\in\N$ and $\U^{\be;w}_T$, and $c_q$, $q\in\N$ are the corresponding coefficients. By Jensen's inequality,
\begin{equation}
\begin{split}
\E\Big[\big\|\U^{\be;w}_Tf-\U^{\be;w}_{t_\eta} f\big\|^p_{L^2([0,\infty))}\Big]
& = \E\bigg[\Big(\sum_{q=1}^\infty c_q^2 \big(e^{-T\Lambda_q/2}-e^{-t_\eta\Lambda_q/2}\big)^2\Big)^{p/2}\bigg] \\
& \le \E\bigg[\sum_{q=1}^\infty c_q^2 \big(e^{-T\Lambda_q/2}-e^{-t_\eta\Lambda_q/2}\big)^p\bigg].
\end{split}
\end{equation}
The random variable $(\omega,q)\mapsto \big(e^{-T\Lambda_q(\omega)/2}-e^{-t_\eta\Lambda_q(\omega)/2}\big)^p$ tends to $0$ in the $\eta\to\infty$ limit $\Pr\times\sum_{q=1}^\infty \delta_{c_q^2}$ almost surely. Its uniform integrability is due to 
\begin{equation}
\E\bigg[\sum_{q=1}^\infty c_q^2 \big(e^{-T\Lambda_q/2}-e^{-t_\eta\Lambda_q/2}\big)^{2p}\bigg]
\le 2^{2p-1}\E\Big[2\big(e^{-p(T+1)\Lambda_1}+1\big)\Big]
\end{equation} 
and a bound on $e^{-p(T+1)\Lambda_1}$ by the squared Hilbert-Schmidt norm of $\U^{\be;w}_{p(T+1)}$, whose expectation has been controlled in the proof of part (c). \ep

\section{Functionals of the reflected Brownian bridge}\label{sec:Hariya}

In this section, we prove Theorem \ref{Theorem: Reflected Bridge-Local Time is Gaussian}, Proposition \ref{Proposition: Reflected Bridge Formula} and Corollary \ref{Corollary: Expectation of Spiked Kernel at Zero}, in the order stated. The key ingredient in the proof of Theorem \ref{Theorem: Reflected Bridge-Local Time is Gaussian} is the next lemma, which extends an argument of Hariya \cite{H}.

\begin{lemma}\label{Lemma: Reduction} 
Let $r_t$, $t\in[0,1]$ be a reflected Brownian bridge, $\al>0$ and $\psi_\al$ be the joint moment-generating function of 
\begin{equation}
\bigg(\int_0^1r_{t}\d t,\,\int_0^\infty\frac{L^a_1(r)^2}2\d a\bigg|L^0_1(r)=\al\bigg).
\end{equation}
Then, with the three-dimensional Bessel bridge
\begin{equation}\label{def:Bes3br}
\d b^{\al/2}_t = \frac{1}{b^{\al/2}_t} \d t
-\frac{b^{\al/2}_t}{1-t} \d t + \dd \widetilde{W}_t,\quad b^{\al/2}_0=\al/2
\end{equation}
and the joint moment-generating function $\widetilde{\psi}_\al$ of $\big(\int_0^1 b^{\al/2}_{t}\d t,\,\int_0^1\widetilde{W}_t\d t\big)$, it holds 
\begin{equation}\label{eq: mom gen iden}
\psi_\al(\theta_1,\theta_2)=e^{-\al\theta_1/4}\,
\widetilde{\psi}_\al(\theta_1+2\theta_2,-\theta_1/2),\quad \theta_1,\theta_2\in\R.
\end{equation}
\end{lemma}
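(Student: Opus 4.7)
My plan is to derive an almost-sure identity, conditional on $L^0_1(r)=\al$, between the pair of functionals on the left-hand side of \eqref{eq: mom gen iden} and explicit functionals of $b^{\al/2}$ together with its driving Brownian motion $\widetilde W$. Once this identity is obtained, \eqref{eq: mom gen iden} follows by exponentiating both sides and taking expectations, recognising the right-hand side as $e^{-\al\theta_1/4}\widetilde\psi_\al(\theta_1+2\theta_2,-\theta_1/2)$ directly from the definition of $\widetilde\psi_\al$.

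The main input is the analogue of Jeulin's theorem from \cite[Corollary 16(iii)]{Pitman2}, which I will invoke in the form: conditional on $L^0_1(r)=\al$, one has the representation $L^a_1(r)=2\,b^{\al/2}_{A^{-1}(a)}$, $a\ge 0$, where $A(t):=\int_0^t \d s/(2 b^{\al/2}_s)$ is the natural time change; by convention $L^a_1(r)=0$ for $a>A(1)=\max_t r_t$. This representation is consistent with $L^0_1(r)=2 b^{\al/2}_0=\al$ and with the total occupation identity $\int_0^\infty L^a_1(r)\d a=\int_0^1 \d t=1$. Changing variables $a=A(t)$ in the two functionals, and using the occupation times formula $\int_0^1 r_t\d t=\int_0^\infty a\,L^a_1(r)\d a$ together with Fubini for the second, I obtain
\begin{equation*}
\int_0^\infty L^a_1(r)^2\d a=2\int_0^1 b^{\al/2}_t\d t,\qquad \int_0^1 r_t\d t=\int_0^1 A(t)\d t=\int_0^1 \frac{1-s}{2 b^{\al/2}_s}\d s.
\end{equation*}

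The last step is to rewrite $\int_0^1 (1-s)/(2 b^{\al/2}_s)\d s$ in terms of $b^{\al/2}$ and $\widetilde W$ directly. I solve the SDE \eqref{def:Bes3br} for $\d s/b^{\al/2}_s$ and substitute, producing
\begin{equation*}
\int_0^1 \frac{1-s}{2 b^{\al/2}_s}\d s=\frac12\int_0^1(1-s)\d b^{\al/2}_s+\frac12\int_0^1 b^{\al/2}_s\d s-\frac12\int_0^1(1-s)\d\widetilde W_s.
\end{equation*}
Integration by parts applied to the first and third integrals (using $b^{\al/2}_0=\al/2$, $b^{\al/2}_1=0$ and $\widetilde W_0=0$) collapses this to $-\al/4+\int_0^1 b^{\al/2}_s\d s-\tfrac12\int_0^1\widetilde W_s\d s$. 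Assembling the pieces yields the almost-sure identity
\begin{equation*}
\theta_1\int_0^1 r_t\d t+\theta_2\int_0^\infty L^a_1(r)^2\d a=-\frac{\al\theta_1}{4}+(\theta_1+2\theta_2)\int_0^1 b^{\al/2}_s\d s-\frac{\theta_1}{2}\int_0^1\widetilde W_s\d s
\end{equation*}
conditional on $L^0_1(r)=\al$, from which \eqref{eq: mom gen iden} follows.

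The main obstacle is the first step, the precise invocation of Pitman's Jeulin-type identification: one needs to extract from \cite[Corollary 16(iii)]{Pitman2} not merely an identity in law for the local time field $(L^a_1(r))_{a\ge 0}$, but a joint coupling of $r$ with $(b^{\al/2},\widetilde W)$ strong enough to make the subsequent identities hold almost surely under that coupling. A minor technicality is the integrability near $s=1$ of $\int_0^1(1-s)/b^{\al/2}_s\d s$ and $\int_0^1(1-s)\d b^{\al/2}_s$, where $b^{\al/2}_s$ degenerates like $\sqrt{1-s}$; in both cases the factor $(1-s)$ provides enough regularization.
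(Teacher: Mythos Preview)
Your proof is correct and follows essentially the same route as the paper. Both arguments rest on Pitman's Jeulin-type identity \cite[Corollary 16(iii)]{Pitman2}, a change of variables to express $\int_0^1 r_t\,\mathrm{d}t$ and $\int_0^\infty L^a_1(r)^2\,\mathrm{d}a$ in terms of the Bessel bridge, and then the SDE \eqref{def:Bes3br} together with integration by parts to eliminate $\int_0^1 (1-s)/b^{\al/2}_s\,\mathrm{d}s$; your time-change $A$ is precisely the inverse of the paper's quantile function $h$. The only difference is cosmetic: the paper works in distribution throughout, whereas you phrase the same computation as an almost-sure identity under a coupling---your stated ``main obstacle'' is therefore not one, since for moment-generating functions an identity in law suffices, and conversely the law-level identity of \cite{Pitman2} lets you \emph{define} $b^{\al/2}_t:=\tfrac12 L^{h^{-1}(t)}_1(r)$ and recover $\widetilde W$ from the SDE, giving exactly the coupling you want.
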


\begin{proof}
Define the function
\begin{equation}
h(a):=\int_0^1 \1_{\{r_t\leq a\}}\d t=\int_0^a L^{a'}_1(r)\d a',
\end{equation}
as well as the corresponding quantile function $h^{-1}(t):=\inf\{a\geq0: h(a)\geq t\}$. In \cite[Corollary 16(iii)]{Pitman2} (see also \cite[equation (8.20)]{Pitman0}), Pitman shows that 
\begin{equation}\label{Equation: Generalized Jeulin}
\bigg(\frac12 L^{h^{-1}(t)}_1(r):\,t\in[0,1]\Big|L^0_1(r)=\al\bigg)
\deq\big(b^{\al/2}_{t}:\,t\in[0,1]\big),
\end{equation}
which extends Jeulin's theorem beyond the $\al=0$ case. Relying on \eqref{Equation: Generalized Jeulin} we find
\begin{equation}\label{eq: Jeulin man 1}
\begin{split}
&\bigg(\frac12\int_0^1\frac{1-t}{b^{\al/2}_{t}}\d t,\,\int_0^1b^{\al/2}_{t}\d t\bigg) \\
&\deq\bigg(\int_0^1\frac{1-t}{L^{h^{-1}(t)}_1(r)}\d t,\,\frac12\int_0^1L^{h^{-1}(t)}_1(r)\d t\bigg| L^0_1(r)=\al\bigg) \\
&=\bigg(\int_0^\infty \frac{1-h(a)}{L^a_1(r)} h'(a)\d a,\,\frac12\int_0^\infty L^a_1(r) h'(a)\d a\bigg| L^0_1(r)=\al\bigg) \\
&=\bigg(\int_0^\infty 1-h(a)\d a,\,\frac12\int_0^\infty L^a_1(r)^2\d a\bigg| L^0_1(r)=\al\bigg) \\
&=\bigg(\int_0^\infty\int_0^1\1_{\{r_t>a\}}\d t\d a,\,\frac12\int_0^\infty L^a_1(r)^2\d a\bigg| L^0_1(r)=\al\bigg) \\
&=\bigg(\int_0^1r_t\d t,\,\frac12\int_0^\infty L^a_1(r)^2\d a\bigg| L^0_1(r)=\al\bigg).
\end{split}
\end{equation}

\smallskip

On the other hand, \eqref{def:Bes3br} implies
\begin{equation}
\int_0^1(1-t)\d b^{\al/2}_t
=\int_0^1 \frac{1-t}{b^{\al/2}_{t}} \d t - \int_0^1 b^{\al/2}_t \d t+\int_0^1 (1-t)\d \widetilde{W}_t.
\end{equation}
Using integration by parts for the two stochastic integrals and rearranging we get 
\begin{equation}\label{eq: Jeulin man 2}
\int_0^1\frac{1-t}{b^{\al/2}_{t}}\d t=-\frac{\al}2+2\int_0^1b^{\al/2}_{t}\d t-\int_0^1\widetilde W_t\d t.
\end{equation}
Finally, a sequential application of \eqref{eq: Jeulin man 1} and \eqref{eq: Jeulin man 2} yields
\begin{equation}
\begin{split}
&\;\E\bigg[\exp\bigg(\theta_1 \int_0^1 r_t\d t+\theta_2\int_0^\infty L^a_1(r)^2\d a\bigg)\bigg| L^0_1(r)=\al\bigg] \\
&=\E\bigg[\exp\bigg(\frac{\theta_1}2\int_0^1\frac{1-t}{b^{\al/2}_{t}}\d t+2\theta_2\int_0^1 b^{\al/2}_{t}\d t\bigg)\bigg] \\
&=e^{-\al\theta_1/4}\E\bigg[\exp\bigg((\theta_1+2\theta_2)\int_0^1 b^{\al/2}_t \d t-\frac{\theta_1}2\int_0^1\widetilde W_t\d t\bigg)\bigg],
\end{split}
\end{equation}
that is, \eqref{eq: mom gen iden}.
\end{proof}

Theorem \ref{Theorem: Reflected Bridge-Local Time is Gaussian} can be now obtained from Lemma \ref{Lemma: Reduction} as follows. 

\begin{proof}[Proof of Theorem \ref{Theorem: Reflected Bridge-Local Time is Gaussian}]
The case $\al=0$ is the subject of \cite[Corollary 2.15]{GS}, \cite[Theorem 1.1]{H}, so we focus on the $\al>0$ case. By Lemma \ref{Lemma: Reduction}, for every $\theta\in\R$,
\begin{equation}\label{eq:Laplace}
\E\bigg[\exp\bigg(\theta \int_0^1 r_t \d t-\frac\theta2\int_0^\infty L^a_1(r)^2\d a\bigg)\bigg|L^0_1(r)=\al\bigg] 
=e^{-\al\theta/4}\widetilde\psi_\al(0,-\theta/2).
\end{equation}
Since $\int_0^1 \widetilde W_t\d t$ is Gaussian with mean $0$ and variance $\frac13$, the right-hand side of \eqref{eq:Laplace} equals to $e^{-\alpha\theta/4+\theta^2/24}$, the moment-generating function of a Gaussian random variable with mean $-\al/4$ and variance $1/12$. 
\end{proof}

We conclude the paper with the proofs of Proposition \ref{Proposition: Reflected Bridge Formula} and Corollary \ref{Corollary: Expectation of Spiked Kernel at Zero}.

\begin{proof}[Proof of Proposition \ref{Proposition: Reflected Bridge Formula}]
Let $\widetilde{r}_t$, $t\in[0,T]$ be a reflected Brownian bridge from $0$ to $0$ on $[0,T]$. By the definition of $K^{\be;w}_T$ in \eqref{Equation: Spiked Kernel},
\begin{equation}
K^{\be;w}_T(0,0)=
\sqrt{\frac{2}{\pi T}}
\E_{\widetilde{r}}\bigg[\exp\bigg(-\int_0^T \frac{\widetilde{r}_t}2\d t
+\int_0^\infty\frac{L^a_T(\widetilde{r})}{\sqrt{\be}}\d W_a
-w\frac{L^0_{T}(\widetilde{r})}2\bigg)\bigg].
\end{equation}
Conditional on $\widetilde{r}$, the integral $\int_0^\infty\frac{L^a_T(\widetilde{r})}{\sqrt{\be}}\d W_a$ is Gaussian with mean $0$ and variance $\int_0^\infty \frac{L^a_T(\widetilde{r})^2}{\be}\d a$. Hence, by taking the expectation with respect to $W$ first, we find
\begin{equation}
\E\big[K^{\be;w}_T(0,0)\big]=\sqrt{\frac{2}{\pi T}}
\E\bigg[\exp\bigg(-\int_0^T \frac{\widetilde{r}_t}2\d t
+\int_0^\infty\frac{L^a_{T}(\widetilde{r})^2}{2\be}\d a
-w\frac{L^0_{T}(\widetilde{r})}2\bigg)\bigg].
\end{equation}
At this point, the proposition is a consequence of
\begin{equation}
\begin{split}
& \bigg(\int_0^T \widetilde{r}_t\d t,\,
\int_0^\infty L^a_T(\widetilde{r})^2\d a,\,
L^0_T(\widetilde{r})\bigg) \\
& \deq
\bigg(T^{3/2}\int_0^{1}r_t\d t,\,
T^{3/2}\int_0^\infty L^a_{1}(r)^2\d a,\,
T^{1/2}L^0_1(r)\bigg),
\end{split}
\end{equation}
which, in turn, is due to the scaling property of (reflected) Brownian bridges.
\end{proof}

\begin{proof}[Proof of Corollary \ref{Corollary: Expectation of Spiked Kernel at Zero}]
In view of \eqref{eq: loc time dens}, the local time $L^0_1(r)$ is a continuous random variable with the density $\frac \al 4 e^{-\al^2/8}$ on $(0,\infty)$. Using this and Theorem \ref{Theorem: Reflected Bridge-Local Time is Gaussian} for the right-hand side of \eqref{eq:exp_kernel} we compute
\begin{equation}
\begin{split}
\E\big[K^{2;w}_T(0,0)\big]
& =\sqrt{\frac{2}{\pi T}} \int_0^\infty \frac \al 4 e^{-\al^2/8}
\exp\Big(\!-T^{1/2}w\frac{\al}2\,\Big) \\
& \quad\cdot\E\bigg[\exp\bigg(-\frac{T^{3/2}}2\bigg(\int_0^Tr_t\d t-\frac12\int_0^\infty L^a_1(r)\d a\bigg)\bigg)\bigg|L^0_1(r)=\al\bigg]\d\al \\
&=\sqrt{\frac{2}{\pi T}} \int_0^\infty \frac \al 4 e^{-\al^2/8}
\exp\Big(\!-T^{1/2}w\frac{\al}2+\frac{T^{3/2}\al}8+\frac{T^3}{96}\,\Big)
\d\al \\
&=\frac{e^{\frac{T^3}{96}} \Big(8+\sqrt{2\pi}e^{T(T-4 w)^2/32}\sqrt{T}(T-4 w)\Big(\erf\Big(\frac{\sqrt{T}(T-4 w)}{4\sqrt{2}}\Big)+1\Big)\Big)}{4 \sqrt{2 \pi T}},
\end{split}
\end{equation}
which simplifies to the right-hand side of \eqref{eq: kernel at 0 expl}.
\end{proof}


%


\bigskip\bigskip\bigskip


\bigskip\bigskip\bigskip

\end{document}